\documentclass[letterpaper,11pt,twoside,keywordsasfootnote,addressatend,noinfoline]{article}

\usepackage{imsart}
\usepackage{fullpage}
\usepackage[english]{babel}
\usepackage{amssymb}
\usepackage{amsmath}
\usepackage{amsthm}
\usepackage{epsfig}
\usepackage{subfigure}
\usepackage{mathrsfs}
\usepackage{color}

\newtheorem{theorem}{Theorem}
\newtheorem{lemma}[theorem]{Lemma}

\newtheorem{proposition}[theorem]{Proposition}

\renewenvironment{proof}{\noindent{\bf Proof.}}{\hspace*{2mm}~$\square$}
\newenvironment{proofof}[1]{\noindent{\bf Proof of #1.}}{\hspace*{2mm}~$\square$}

\newcommand{\A}{\mathscr{A}}
\newcommand{\B}{\mathscr{B}}
\newcommand{\C}{\mathscr{C}}
\newcommand{\Z}{\mathbb{Z}}
\newcommand{\R}{\mathbb{R}}
\newcommand{\lat}{\mathscr{L}}

\newcommand{\norm}[1]{|\!|#1|\!|}
\newcommand{\ind}{\mathbf{1}}
\newcommand{\ep}{\epsilon}
\newcommand{\n}{\hspace*{-5pt}}
\newcommand{\bul}[1]{#1\,-\,$\bullet$}

\newcommand{\arrow}[1]{\overset{\hbox{\tiny #1}}{\longrightarrow}}

\newcommand{\cp}{\textsc{cp}}
\newcommand{\irp}{\textsc{irp}}
\newcommand{\cpa}{\textsc{cpa}}
\newcommand{\irpa}{\textsc{irpa}}

\DeclareMathOperator{\poisson}{Poisson}
\DeclareMathOperator{\card}{card}

\begin{document}

\begin{frontmatter}
\title{The contact process with social clusters \\ and asymptomatic states}
\runtitle{The contact process with social clusters and asymptomatic states}
\author{Karima Belatreche, Lamia Belhadji and Nicolas Lanchier}
\runauthor{Karima Belatreche, Lamia Belhadji and Nicolas Lanchier}
\address{}
\maketitle

\begin{abstract} \
 This paper is concerned with a natural variant of the contact process that places a social cluster of individuals, rather than a single individual, at each lattice point, and distinguishes between symptomatic and asymptomatic individuals. In particular, the epidemic model depends on four infection parameters: the external~(inter-cluster) and internal~(intra-cluster) infection rates for both symptomatic and asymptomatic individuals. Newly infected individuals begin as asymptomatic before transitioning to the symptomatic state at a spontaneous mutation rate, and infected individuals recover at a baseline rate of one. We study the phase structure of the deterministic nonspatial mean-field model when each cluster reduces to a pair of individuals, and prove that the stochastic spatial model exhibits the same phase transitions with respect to the four infection rates for all cluster sizes. We also prove the existence of a phase transition driven by the cluster size itself for the spatial model.
\end{abstract}

\begin{keyword}[class=AMS]
\kwd[Primary ]{60K35, 92D30.}
\end{keyword}

\begin{keyword}
\kwd{Contact process; Epidemic model; Social clusters; Symptomatic versus asymptomatic; Coupling; Block construction; Duality; Random walks.}
\end{keyword}

\end{frontmatter}


\section{Introduction}
\label{sec:intro}
 Like the original model of Kermack and McKendrick~\cite{kermack_mckendrick_1927}, most mathematical models in epidemiology consist of systems of coupled ordinary differential equations.
 Diseases, however, spread by contact, showing the importance of models based on interacting particle systems that account for the presence of an underlying network of interactions, or social network in the context of human populations.
 The first example of such a model is Harris' contact process~\cite{harris_1974} in which the infection spreads by contact through the edges of the~$d$-dimensional integer lattice.
 Since Harris' original paper, a number of variants of the contact process modeling the spread of a disease have been introduced, and we refer to~\cite[Chapter~2]{lanchier_2024} for an overview.
 In this spirit, the objective of this paper is to study an extension of the contact process that accounts for the presence of social clusters and distinguishes between symptomatic and asymptomatic individuals. \\
\indent
 In the contact process, each site~$x \in \Z^d$ represents an individual who is either healthy or infected.
 Infected individuals recover at rate one, and interact with each of their~$2d$ neighbors at rate~$\lambda$, which results in healthy neighbors becoming infected.
 More precisely, the state at time~$t$ is a configuration~$\xi_t : \Z^d \to \{0, 1 \}$, where~0 = healthy and~1 = infected, and the process evolves according to the continuous-time Markov chain whose transitions at site~$x$ are given by
 $$ \begin{array}{rclclcrclcl}
      0 & \n \to \n & 1 & \hbox{at rate} & \lambda \,\sum_{y \sim x} \xi_t (y) & \quad \hbox{and} \quad & 1 & \n \to \n & 0 & \hbox{at rate} & 1, \end{array} $$
 where~$y \sim x$ means that site~$y$ is one of the~$2d$ nearest neighbors of site~$x$.
 This process will be referred to as~$\cp (\lambda)$ later.
 The main result about the contact process on the lattice is the existence of a unique phase transition at a nondegenerate critical value:
 there exists~$\lambda_c > 0$ that depends on the spatial dimension such that the process dies out when~$\lambda < \lambda_c$ but survives when~$\lambda > \lambda_c$.
 After proving that a proper rescaling of the process dominates oriented site percolation, Bezuidenhout and Grimmett~\cite{bezuidenhout_grimmett_1990} also deduced several important results: extinction at the critical value, complete convergence theorem, and shape theorem. \\
\indent
 Schinazi~\cite{schinazi_2002} introduced a variant of the contact process in which each lattice point represents a social cluster of individuals rather than a single individual.
 The process is characterized by an external infection rate~$\lambda$, the rate at which infected individuals infect nearby healthy clusters, as well as an internal infection rate~$\phi$, the rate at which infected individuals infect healthy individuals in their own social cluster.
 Belhadji and Lanchier~\cite{belhadji_lanchier_2005} studied a variant of his model, called the individual recovery process, in which individuals, rather than social clusters, recover independently at rate one.
 In this case, the state at time~$t$  is a configuration
 $$ \xi_t : \Z^d \to \{0, 1, 2, \ldots, N \} \quad \hbox{where} \quad N = \hbox{cluster size}, $$
 that keeps track of the number of infected individuals in each cluster, and the possible transitions of the process at cluster~$x$ are given by
 $$ \begin{array}{rclclrclcll}
      0 & \n \to \n & 1 & \hbox{at rate} & \lambda \,\sum_{y \sim x} \xi_t (y), & \quad
      i & \n \to \n & i + 1 & \hbox{at rate} & i \phi & \hbox{for} \ i < N, \vspace*{4pt} \\ &&&&&
      i & \n \to \n & i - 1 & \hbox{at rate} & i      & \hbox{for} \ i \leq N. \end{array} $$
 This process will be denoted by~$\irp (\lambda, \phi, N)$ later.
 Belhadji, Lanchier and Mercer~\cite{belhadji_lanchier_mercer_2024} introduced more recently another variant of the contact process that distinguishes infected individuals depending on whether they are symptomatic or asymptomatic.
 The process is characterized by the infection rate~$\lambda_1$ of asymptomatic individuals, the infection rate~$\lambda_2$ of symptomatic individuals, and the rate~$\gamma$ at which asymptomatic individuals become symptomatic.
 In this case, the state at time~$t$  is a configuration~$\xi_t : \Z^d \to \{0, 1, 2 \}$, where~0 = healthy,~1 = asymptomatic, and~2 = symptomatic, and the transition rates of the process at site~$x$ are given by
 $$ \begin{array}{rclclrclcl}
      0 & \n \to \n & 1 & \hbox{at rate} & \sum_{y \sim x} (\lambda_1 \ind \{\xi_t (y) = 1 \} + \lambda_2 \ind \{\xi_t (y) = 2 \}), & \quad
      1 & \n \to \n & 2 & \hbox{at rate} & \gamma, \vspace*{4pt} \\
      1 & \n \to \n & 0 & \hbox{at rate} & 1, & \quad
      2 & \n \to \n & 0 & \hbox{at rate} & 1. \end{array} $$
 This process will be denoted by~$\cpa (\lambda_1, \lambda_2, \gamma)$ later.
 We refer the reader to~\cite[Section~2.4.4]{lanchier_2024} for a review of the individual recovery process, and to~\cite[Section~2.1.5]{lanchier_2024} for a review of the contact process with an asymptomatic state. \\
\indent
 The epidemic model we study in this paper combines~\cite{belhadji_lanchier_2005, belhadji_lanchier_mercer_2024} by placing a social cluster at each lattice point, and by distinguishing between symptomatic and asymptomatic individuals.
 In particular, the process now depends on four infection rates: an external infection rate and an internal infection rate from both symptomatic and asymptomatic individuals.
 The process is also characterized by the cluster size~$N$ like model~\cite{belhadji_lanchier_2005} and the mutation rate~$\gamma$ from asymptomatic to symptomatic like model~\cite{belhadji_lanchier_mercer_2024}.
 More precisely, the state at time~$t$ is now
 $$ \xi_t : \Z^d \to \{(i, j) \in \Z_+ \times \Z_+ : i + j \leq N \}, $$
 where~$\xi_t (x) = (i, j)$ means that the social cluster~$x$ has~$i$ infected asymptomatic and~$j$ infected symptomatic individuals, and the transition rates at cluster~$x$ are given by
 $$ \begin{array}{rclcl}
      (0, 0) & \n \to \n & (1, 0) & \hbox{at rate} & \sum_{y \sim x} \sum_{i, j} \,(i \lambda_1 + j \lambda_2) \,\ind \{\xi_t (y) = (i, j) \}, \vspace*{4pt} \\
      (i, j) & \n \to \n & (i + 1, j) & \hbox{at rate} & i \phi_1 + j \phi_2 \quad \hbox{for} \ i + j < N, \vspace*{4pt} \\
      (i, j) & \n \to \n & (i - 1, j + 1) & \hbox{at rate} & i \gamma \quad \hbox{for} \ i + j \leq N, j < N, \vspace*{4pt} \\
      (i, j) & \n \to \n & (i - 1, j) & \hbox{at rate} & i \quad \hbox{for} \ i + j \leq N, \vspace*{4pt} \\
      (i, j) & \n \to \n & (i, j - 1) & \hbox{at rate} & j \quad \hbox{for} \ i + j \leq N. \end{array} $$
 The first transition indicates that healthy cluster get infected at the external infection rate~$\lambda_1$ from each nearby asymptomatic individual, and at the external infection rate~$\lambda_2$ from each nearby symptomatic individual.
 Once a cluster is infected, additional infections originate from the cluster, and the second transition indicates that healthy individuals now get infected at the internal infection rate~$\phi_1$ from each asymptomatic individual in the cluster, and at the internal infection rate~$\phi_2$ from each symptomatic individual in the cluster.
 Regardless of the origin of the infection, newly infected individuals are always asymptomatic, and the third transition indicates that these individuals become symptomatic at rate~$\gamma$.
 Finally, the last two transitions indicate that infected individuals recover at rate one, regardless of whether they are asymptomatic or symptomatic.
 From now on, we assume that the process starts from a translation-invariant distribution with a positive density of~1s and a positive density of~2s, and we say that there is
 $$ \begin{array}{rcrcl}
      \hbox{survival} & \hbox{when} & \liminf_{t \to \infty} P (\xi_t (x) \neq 0) > 0 & \hbox{for all} & x \in \Z^d, \vspace*{4pt} \\
    \hbox{extinction} & \hbox{when} &    \lim_{t \to \infty} P (\xi_t (x) \neq 0) = 0 & \hbox{for all} & x \in \Z^d. \end{array} $$
 Before investigating the survival phase and extinction phase of the stochastic spatial model, we will study its deterministic nonspatial counterpart~(mean-field model).
 This model consists of a system of coupled differential equations for the densities~$u_{ij}$ of clusters with~$i$ asymptomatic individuals and~$j$ symptomatic individuals,
 obtained under the assumption that the population is homogeneously mixing.
 Defining survival in the context of the mean-field model as the existence of a fixed point with a positive density of infected sites, at least for social clusters of size two, we will prove the following necessary and sufficient condition for survival.
\begin{proposition}
\label{prop:MF}
 When~$N = 2$, the mean-field model survives if and only if
 $$ (\lambda_1 + \gamma \lambda_2)((1 + \phi_1)(1 + \phi_2) + (1 + \phi_1) + \gamma (1 + \phi_2)) > \gamma \phi_1 + \phi_2 + (1 + \gamma)(2 + \gamma). $$
\end{proposition}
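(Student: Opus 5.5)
The approach is to reduce survival of the mean-field model to a reproduction-number condition $R_0 > 1$, where $R_0$ is the expected total external infection pressure produced by a single cluster during one infectious episode. For $N = 2$ the cluster states are $(0,0), (1,0), (0,1), (2,0), (1,1), (0,2)$. Writing $u_{ij}$ for their densities, the mean-field model is
$$ u_{00}' = - u_{00} I + u_{10} + u_{01}, \qquad I = \lambda_1 (u_{10} + 2u_{20} + u_{11}) + \lambda_2 (u_{01} + u_{11} + 2u_{02}). $$
The term $u_{00} I$ is the rate at which healthy clusters enter state $(1,0)$. Apart from this inflow, each infected cluster evolves according to the within-cluster Markov chain below, which is linear in the $u_{ij}$:
$$ \begin{array}{ll}
(1,0) \to (2,0), (0,1), (0,0) & \hbox{at rates } \phi_1, \ \gamma, \ 1, \\
(0,1) \to (1,1), (0,0) & \hbox{at rates } \phi_2, \ 1, \\
(2,0) \to (1,1), (1,0) & \hbox{at rates } 2\gamma, \ 2, \\
(1,1) \to (0,2), (0,1), (1,0) & \hbox{at rates } \gamma, \ 1, \ 1, \\
(0,2) \to (0,1) & \hbox{at rate } 2.
\end{array} $$

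Step one is to compute $R_0$. Let $h(i,j)$ be the expected integral of $\lambda_1 i + \lambda_2 j$ up to absorption at $(0,0)$, for the chain started from $(i,j)$. First-step analysis gives five linear equations, for instance
$$ (1 + \gamma + \phi_1)\, h(1,0) = \lambda_1 + \phi_1 h(2,0) + \gamma h(0,1), \qquad 2\, h(0,2) = 2\lambda_2 + 2 h(0,1), $$
and similarly for the other three states. I will solve this system by eliminating $h(0,2)$, then $h(2,0)$ and $h(1,1)$, to obtain $R_0 = h(1,0)$ as a ratio of polynomials.

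Step two is to show that a positive fixed point exists if and only if $R_0 > 1$. At a fixed point the infected densities solve the linear stationary equations of the chain with source $u_{00} I$ entering $(1,0)$. By linearity, the vector $(u_{ij})_{(i,j) \neq (0,0)}$ is $u_{00} I$ times the vector of expected occupation times starting from $(1,0)$. Plugging this into the definition of $I$ gives $I = u_{00}\, I\, R_0$. So either $I = 0$, which forces all infected densities to vanish and gives the disease-free fixed point, or $u_{00} = 1/R_0$. The remaining mass $1 - 1/R_0$ is then split among the infected states in proportion to the expected occupation times, and this determines $I > 0$ uniquely. This yields a fixed point with positive infected density exactly when $1/R_0 < 1$, which proves both directions at once.

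The remaining work, and the main obstacle, is the algebra in step one. I need to clear denominators in $h(1,0) > 1$ and check that the result factors as
$$ (\lambda_1 + \gamma\lambda_2)\big((1+\phi_1)(1+\phi_2) + (1+\phi_1) + \gamma(1+\phi_2)\big) > \gamma\phi_1 + \phi_2 + (1+\gamma)(2+\gamma). $$
To organize the elimination, I will first compute the expected occupation times $\tau_{ij}$ starting from $(1,0)$. Then $R_0 = \lambda_1(\tau_{10} + 2\tau_{20} + \tau_{11}) + \lambda_2(\tau_{01} + \tau_{11} + 2\tau_{02})$. I expect the combination $\lambda_1 + \gamma\lambda_2$ to emerge because the occupation-time combinations multiplying $\lambda_1$ and $\lambda_2$ share a common factor. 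I will also need to verify that the common denominator is positive, so that the direction of the inequality is preserved.
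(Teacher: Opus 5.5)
Your proposal is correct and is essentially the paper's proof recast as a next-generation argument. The paper solves the stationary equations for the five infected states up to a common scalar; with unit inflow at $(1,0)$ these are exactly your occupation-time equations. It then reduces the $u_{00}$ equation to $A u_{01}(1 - B u_{01}) = C u_{01}$, so its criterion $A > C$ is precisely your $R_0 = A/C > 1$. The algebra closes as you expect: $\tau_{01} + \tau_{11} + 2\tau_{02} = \gamma(\tau_{10} + 2\tau_{20} + \tau_{11})$, and the denominator $\gamma\phi_1 + \phi_2 + (1+\gamma)(2+\gamma)$ is positive.

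Normalizing by the inflow $u_{00} I$ instead of by $u_{01}$, as the paper does, has one small advantage: you never divide by $\gamma$. Your argument therefore also covers $\gamma = 0$, where every fixed point has $u_{01} = 0$.
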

\noindent
 The proposition implies that, at least when~$N = 2$, the system is monotone with respect to the infection rates in the sense that, if there is survival for a given set of parameters, increasing the infection rates still results in survival, and by contraposition, if there is extinction for a given set of parameters, decreasing the infection rates still results in extinction.
 The same result is expected for the spatial model, but using standard coupling methods to compare processes with different infection rates only shows that the process is monotone with respect to the infection rates in the parameter region where~$\lambda_1 \leq \lambda_2$ and~$\phi_1 \leq \phi_2$.
 However, our analysis shows that the phase transitions following from Proposition~\ref{prop:MF} for the mean-field model exist as well for the spatial model, and we now look at the effect of each parameter to explain these results. \vspace*{5pt} \\
{\bf External infection rates}.
 It follows from the proposition that the mean-field model survives when the external infection rates are larger than one, or when this holds for the asymptomatic external infection rate and~$\gamma$ is small or for the symptomatic external infection rate and~$\gamma$ is large.
 The same results hold for the spatial model regardless of the size of the social clusters after replacing the threshold one with the critical value~$\lambda_c$ of the contact process.
\begin{theorem}
\label{th:lambdalarge}
 For all~$\phi_1, \phi_2 \geq 0$, there is survival when
\begin{itemize}
\item[a.]
 $\lambda_1, \lambda_2 > \lambda_+$ for some~$\lambda_+ \leq \lambda_c$, \vspace*{4pt}
\item[b.]
 $\lambda_1 > \lambda_c$ and~$\gamma < \gamma_-$ for some~$\gamma_- > 0$, \vspace*{4pt}
\item[c.]
 $\lambda_2 > \lambda_c$ and~$\gamma > \gamma_+$ for some~$\gamma_+ < \infty$.
\end{itemize}
\end{theorem}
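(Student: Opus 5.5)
Throughout, we compare the process with processes we already understand. The basic observation is that internal infections only add infected individuals to already infected clusters, so setting $\phi_1 = \phi_2 = 0$ can only hurt the process. Likewise, an infected cluster $x$ with $\xi_t(x) \neq (0,0)$ contains at least one infected individual, so it infects each neighbor at rate at least $\min(\lambda_1, \lambda_2)$. The plan is to couple $\xi_t$ with a process in which each lattice point carries at most one "tagged" infected individual, obtained by following a single individual per cluster and ignoring the others. We then compare this to $\cp$ or $\cpa$. All couplings use a common Harris graphical representation: Poisson processes for external infection attempts along each oriented edge with rates $\lambda_1, \lambda_2$, internal infection marks, mutation marks at rate $\gamma$, and recovery marks at rate one, all attached to labelled individuals $1, \ldots, N$ in each cluster.

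\textbf{Part a.} Define the projection $\eta_t(x) = \ind\{\xi_t(x) \neq (0,0)\}$. We show that $\eta_t$ dominates a $\cp(\lambda)$ with $\lambda = \min(\lambda_1,\lambda_2)$, run on the same graphical representation, where each contact-process particle is identified with one designated infected individual of the cluster. When that designated individual recovers, the contact process site becomes empty; if other individuals are still infected, the cluster remains infected, which only helps $\xi_t$. Its external infections occur at rate at least $\lambda$, whatever its type, since we thin the rate $\lambda_1$ or $\lambda_2$ attempts down to rate $\lambda$. Newly infected clusters receive a designated individual, namely the newly infected asymptomatic one. This gives $\eta_t \geq \zeta_t$ for a $\cp(\lambda)$ process $\zeta_t$ started from $\eta_0$. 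Survival of $\cp(\lambda)$ from a translation-invariant initial law with positive density then gives survival with $\lambda_+ = \lambda_c$. One might obtain $\lambda_+ < \lambda_c$ by using the extra internal infections, but $\lambda_+ \le \lambda_c$ suffices.

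\textbf{Parts b and c.} The same tagging argument shows that $\xi_t$ dominates the single-individual process $\cpa(\lambda_1,\lambda_2,\gamma)$. Here each tagged individual infects neighbors at rate $\lambda_1$ while asymptomatic and $\lambda_2$ after mutating, mutates at rate $\gamma$, and recovers at rate one. This is exactly the transition mechanism of $\cpa$, and newly infected clusters start at $(1,0)$, matching $0 \to 1$ in $\cpa$. It therefore suffices to show that $\cpa$ survives in the regimes of b and c. This is the main step. For c, as $\gamma \to \infty$ an asymptomatic individual mutates almost immediately, so $\cpa$ is a small perturbation of $\cp(\lambda_2)$. For b, as $\gamma \to 0$ it is a small perturbation of $\cp(\lambda_1)$. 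We handle both with a block construction. Since $\lambda_2 > \lambda_c$ (resp. $\lambda_1 > \lambda_c$), Bezuidenhout and Grimmett's result gives, for any $\ep > 0$, a space-time box event for the contact process with probability at least $1 - \ep$. This event is measurable with respect to the graphical representation in a finite box $[-cL, cL]^d \times [0, T]$, and it implies comparison with oriented site percolation. Inside this finite box there are almost surely finitely many infection arrows. For c, we couple so that, with probability tending to one as $\gamma \to \infty$, every newly infected individual mutates before using any of its rate $\lambda_1$ arrows. The box evolution then coincides with that of $\cp(\lambda_2)$, after thinning the symptomatic infection arrows appropriately; when $\lambda_1 > \lambda_2$ we simply ignore the extra asymptomatic arrows, since attractiveness lets us discard infections. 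For b, with probability tending to one as $\gamma \to 0$, no mutation mark occurs in the box, so the evolution is exactly that of $\cp(\lambda_1)$. Choosing $\gamma_+$ large, resp. $\gamma_-$ small, makes the good-event probability exceed $1 - 2\ep$. Standard comparison with supercritical oriented percolation then yields survival.

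The delicate point is monotonicity: $\cpa$ is not obviously attractive when $\lambda_1 > \lambda_2$, since a mutation reduces the infection rate. We avoid global monotonicity by working only on the finite-box good events, where the dynamics literally agree with a contact process, so only attractiveness of $\cp$ itself is used.
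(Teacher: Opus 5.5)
Part a is fine and takes a different route from the paper. Tagging one infected individual per cluster dominates $\cp(\lambda_1\wedge\lambda_2)$ directly and gives $\lambda_+=\lambda_c$. The paper instead sandwiches the process between two individual recovery processes (Lemma~\ref{lem:irp}) and imports $\lambda_+(\phi,N)\le\lambda_c$ from the \irp{} results; either suffices for the statement. One detail: when a $\cp$ arrow hits an empty $\cp$ site whose cluster is still infected, you must re-tag an existing individual. That is harmless in part a, because every infected individual infects at rate at least $\lambda_1\wedge\lambda_2$.

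The genuine gap is your claim that ``the same tagging argument'' gives $\xi_t\ge\cpa(\lambda_1,\lambda_2,\gamma)$. Re-tagging is exactly where this breaks when $\lambda_1>\lambda_2$. Suppose cluster $x$ holds the tagged asymptomatic individual plus a second individual, created by an internal infection, that has since mutated. The tagged one recovers, so the \cpa{} site $x$ is empty while cluster $x$ holds a single symptomatic individual. If a \cpa{} infection now reaches $x$, the \cpa{} site becomes asymptomatic and infects at rate $\lambda_1$. In $\xi$ the arrow has no effect, since the cluster is not healthy, and there is no asymptomatic individual to tag. The only infected individual infects at rate $\lambda_2<\lambda_1$, so the order is broken at rate at least $\lambda_1-\lambda_2$ by the next arrow out of $x$. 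This is why Lemma~\ref{lem:cpa} assumes $\lambda_1\le\lambda_2$ and uses the order ``\cpa{} type $\le$ type at level one''. It is also why the paper stresses, at the start of Section~\ref{sec:lambdalarge}, that no useful ordering is available when $\lambda_1>\lambda_2$. For part c this is harmless: if $\lambda_1>\lambda_2>\lambda_c$, part a applies, which is the paper's argument. But part b is only non-trivial when $\lambda_1>\lambda_2$, since otherwise $\lambda_2\ge\lambda_1>\lambda_c$ and part a applies. So your reduction to the \cpa{} fails in exactly the case it must cover.

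Separately, your block step is incomplete. The event ``no $\circ$ in the box'' does not make the box evolve as $\cp(\lambda_1)$. Symptomatic individuals already present at the bottom of the box still occupy clusters, block the 1s, and infect only at rate $\lambda_2$. Such individuals exist there because the initial density is positive and they are recreated over time since $\gamma>0$.

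What you need for b, and what the paper does, is to run the block construction on $\xi$ itself:
\begin{itemize}
\item Include ``no 2s at the bottom of the block'' in the definition of a good site.
\item First require a $\times$ at every site of a surrounding region, so that the pre-existing 2s die; without $\circ$ marks no new 2s appear.
\item Use a block construction for the supercritical $\cp(\lambda_1)$ whose dual paths stay inside the 2-free region; the paper uses the Durrett--Neuhauser repositioning construction. In that region, with no mutations, $\xi$ is $\irp(\lambda_1,\phi_1,N)$, which dominates $\cp(\lambda_1)$ by your tagging argument because only one type is present.
\end{itemize}
The absence of 2s then propagates from block to block, and the perturbation in $\gamma$ is done after fixing $L$, as you proposed.
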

\noindent
 Indeed, when both external infection rates are larger than~$\lambda_c$, the process dominates a supercritical contact process.
 Similarly, when~$\gamma$ is small, most of the infected individuals are asymptomatic so the process essentially dominates a contact process with parameter~$\lambda_1$ and survives when~$\lambda_1 > \lambda_c$, whereas when~$\gamma$ is large, most of the infected individuals are symptomatic so the process essentially dominates a contact process with parameter~$\lambda_2$ and survives when~$\lambda_2 > \lambda_c$.
 It also follows from the proposition that the mean-field model dies out when the external infection rates are small, or when this holds for the asymptomatic external infection rate and~$\gamma$ is small or for the symptomatic external infection rate and~$\gamma$ is large.
 The spatial model exhibits a similar behavior regardless of the size of the social clusters, as stated in the following theorem.
\begin{theorem}
\label{th:lambdasmall}
 For all~$\phi_1, \phi_2 \geq 0$, there is extinction when
\begin{itemize}
\item[a.]
 $\lambda_1, \lambda_2 < \lambda_-$ for some~$\lambda_- > 0$, \vspace*{4pt}
\item[b.]
 $\lambda_1 < \lambda_-$ and~$\gamma < \gamma_-$ for some~$\lambda_- > 0$ and~$\gamma_- > 0$, \vspace*{4pt}
\item[c.]
 $\lambda_2 < \lambda_-$ and~$\gamma > \gamma_+$ for some~$\lambda_- > 0$ and~$\gamma_+ < \infty$.
\end{itemize}
\end{theorem}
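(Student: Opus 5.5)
Theorem~\ref{th:lambdasmall} asks for extinction, and extinction for all~$\phi_1, \phi_2$ and all~$N$ is best obtained by comparison with a subcritical branching structure. The plan is to work at the level of clusters and use a branching-random-walk, or first-moment, argument. An infected cluster is a small birth–death system, and it produces new infected clusters at neighboring sites. If we show that the expected number of \emph{external} infections emitted by one cluster during its lifetime (a ``cluster generation'') is strictly less than one, then extinction follows. The reason is that the process is dominated by a multitype branching process (a Galton–Watson process on clusters) whose mean offspring is below one. Concretely, we couple~$\xi_t$ with the set-valued process in which every external infection attempt succeeds, even on already infected clusters, and starts an independent copy of the internal dynamics from state~$(1,0)$. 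This process dominates~$\xi_t$: infected sites of~$\xi_t$ are covered by the union of these copies, because internal dynamics are monotone in the initial state by the standard coupling.

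Starting from a translation-invariant configuration, we bound~$P(\xi_t(x) \neq 0)$ by summing over ancestral lineages. The expected number of descendants at time~$t$ located at~$x$ decays exponentially in~$t$ once the branching is subcritical and lifetimes have exponential tails. The key quantity is
$$ m = 2d\, E\Big[\int_0^\tau (\lambda_1 I_s + \lambda_2 J_s)\,ds\Big], $$
where~$(I_s, J_s)$ is the internal chain on~$\{i+j \leq N\}$ started from~$(1,0)$ and~$\tau$ is its absorption time at~$(0,0)$. This chain is finite with absorbing state~$(0,0)$ reachable from everywhere, so~$E[\int_0^\tau I_s\,ds]$ and~$E[\int_0^\tau J_s\,ds]$ are finite constants~$A(\phi,\gamma,N)$ and~$B(\phi,\gamma,N)$. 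Moreover~$\tau$ has an exponential moment, which makes the space-time sum converge.

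The three cases come from controlling~$m$:
\begin{itemize}
\item[a.] We have~$m \leq 2d\max(\lambda_1,\lambda_2)(A+B)$. Choosing~$\lambda_- = (2d(A+B))^{-1}$ gives case~a.
\item[b.] Here we need~$\lambda_2 B$ to be small for small~$\gamma$ uniformly in~$\lambda_2$. This is not possible if~$\lambda_2$ is arbitrary, so I read case~b as~$\lambda_2$ fixed, with~$\gamma_-$ allowed to depend on it. Since symptomatic individuals arise only through~$\gamma$-transitions, one gets~$B \leq \gamma\, C$ with~$C$ bounded as~$\gamma \to 0$. Also~$A$ stays bounded by its value at~$\gamma = 0$, which is finite because of the recovery rate one and the cap~$N$. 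Hence~$m \leq 2d(\lambda_1 A_0 + \lambda_2 \gamma C) < 1$ for~$\lambda_1 < \lambda_-$ and~$\gamma < \gamma_-$.
\item[c.] As~$\gamma \to \infty$, the expected time any individual spends asymptomatic is at most~$1/(1+\gamma)$. Thus~$A \leq (1+\gamma)^{-1}\, E[\#\text{individuals ever infected}]$. The latter is bounded uniformly in large~$\gamma$, since the total infection rate is at most~$\max(\phi_1,\phi_2)N$ per time while the lifetime is controlled. So~$\lambda_1 A \to 0$, and~$B$ converges to its~$\gamma = \infty$ limit. This gives~$m < 1$ for~$\lambda_2 < \lambda_-$ and~$\gamma > \gamma_+$.
\end{itemize}

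The main obstacle is the uniformity in~$\gamma$ in cases~b and~c. We must show that~$A$, $B/\gamma$ (as~$\gamma \to 0$) and~$\gamma A$, $B$ (as~$\gamma \to \infty$) stay bounded, for fixed~$N$ and~$\phi$'s. I would prove this by dominating~$I_s + J_s$ by the single-type chain~$\irp$ internal dynamics with birth rate~$\max(\phi_1,\phi_2)$, capped at~$N$. This dominating chain does not depend on~$\gamma$, which gives uniform bounds on total infected time and on the number of individuals ever infected. Then~$B$ is bounded by~$\gamma$ times the expected total asymptomatic time (each symptomatic individual has mean lifetime one). Likewise~$A$ is bounded by~$(1+\gamma)^{-1}$ times the expected number of individuals ever infected.

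The final step is the standard one: the expected number of infected clusters at~$x$ at time~$t$ is at most~$\sum_n m^n P(\text{$n$ generations fit in time } t)$. This tends to zero because generation lengths are i.i.d.\ with positive mean.
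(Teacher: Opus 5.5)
Your argument is correct in substance, but it takes a different route from the paper. The paper gets part a from the sandwich~\eqref{eq:irp-irpa} (Lemmas~\ref{lem:irpa}--\ref{lem:irp}) combined with the extinction result for the individual recovery process in~\cite{belhadji_lanchier_2005}. It gets parts b and c from a block construction on~$\vec{\lat}_-$. At $\lambda_1 = \gamma = 0$, death marks clear the pillar~$\C_{m, n}$ of 2s, and the core cluster then runs a birth--death chain trapped at zero (Lemma~\ref{lem:RW}); for part c the roles of the 1s and 2s are exchanged at $\lambda_2 = 0$, $\gamma = \infty$. Perturbation in the external rate and in~$\gamma$ (Lemmas~\ref{lem:perturb-1}--\ref{lem:perturb-2}) and non-percolation of closed sites (Lemma~\ref{lem:perco-2}) then conclude. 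You replace all of this with one first-moment comparison with a branching structure on cluster episodes. Your route is self-contained, gives explicit constants and exponential decay, and needs no case analysis on the ordering of the rates.

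What the block argument buys is uniformity. In part b, the paper's~$\lambda_-$ and~$\gamma_-$ depend only on~$\phi_1$, $N$ and~$d$, not on~$\lambda_2$ or~$\phi_2$. Once the pillar has no 2s and no mutation marks, every neighbor of the core is free of 2s, so no 2-arrow can reach the core however large~$\lambda_2$ is. A first-moment bound cannot see this, because it ignores that external infections only land on fully healthy clusters. So your need to let~$\gamma_-$ depend on~$\lambda_2$ is a limitation of the method, not of the theorem. Your reading is still consistent with how the paper quantifies elsewhere, e.g.\ $\gamma_{\pm} (\lambda_1, \lambda_2)$ in the proofs of Theorems~\ref{th:lambdalarge}.c and~\ref{th:Nsmall}.b.

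A few steps need repair, none of them fatal.

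First, the claim ``$A$ stays bounded by its value at $\gamma = 0$'' is false in general. For $N = 2$ and $\phi_1 = 0$ one computes $A' (0) = (\phi_2 - 2)/(\phi_2 + 2)$, which is positive for $\phi_2 > 2$, because a mutated individual can re-create a 1. Use the $\gamma$-free bound from your dominating chain instead. The cleanest way uses two facts:
\begin{itemize}
\item $B = \gamma A$ exactly, since mutations occur at rate~$\gamma I_s$ and each 2 lives an exponential time with mean one. Hence $m = 2d (\lambda_1 + \gamma \lambda_2) A$.
\item Each asymptomatic period has mean $1/(1 + \gamma)$, and the expected number of individuals ever infected is at most $1 + \bar\phi D$. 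Here $\bar\phi = \phi_1 \vee \phi_2$ and $D = 1 + \bar\phi + \cdots + \bar\phi^{N - 1}$ is the expected infected time of your dominating chain started from one infected individual. Hence $A \leq (1 + \bar\phi D)/(1 + \gamma)$.
\end{itemize}
Then $m \leq 2d (1 + \bar\phi D)(\lambda_1 + \gamma \lambda_2)/(1 + \gamma)$, which gives a (uniformly in~$\gamma$, as in the paper), b and c at once.

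Second, the domination of~$\xi_t$ by the branching structure does not come from monotonicity of the internal dynamics in the initial state. The paper only has that monotonicity when $\phi_1 \leq \phi_2$ (Lemma~\ref{lem:irpa} and the remark after it). The domination comes instead from the fact that each infection episode of a cluster starts from $(1, 0)$ in a fully healthy cluster, then runs exactly the isolated internal chain until absorption, since arrows pointing at an infected cluster have no effect. Episodes can therefore be matched injectively with copies, generation by generation.

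Third, the final sum must count copies of generation~$n$ that are \emph{alive} at time~$t$, i.e.\ $\sum_n m^n P (\hat S_n \leq t < \hat S_n + \tau)$. Here $\hat S_n$ is the many-to-one random walk of birth times and~$\tau$ is an independent lifetime. This tends to zero by dominated convergence, since $\sum_n m^n < \infty$ and $\hat S_n + \tau < \infty$ almost surely. It does not follow from generation lengths having positive mean; indeed, the expected number of copies \emph{born} by time~$t$ does not go to zero.
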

\noindent
 The intuition behind these results is that, even if the cluster size and the internal infection rates are large, infected clusters fully recover after a finite random time, so if the external infections are rare enough, the infection dies out.
 The last two parts of the theorem follow from a similar argument using again that, when~$\gamma$ is small, most of the infected individuals are asymptomatic, whereas when~$\gamma$ is large, most of the infected individuals are symptomatic. \vspace*{5pt} \\
\noindent {\bf Internal infection rates}.
 In view of the previous results, the next natural step is to study the existence of a phase transition with respect to the internal infection rates when the external infection rates are less than one.
 The proposition implies that the mean-field model survives when the external infection rates are positive and the internal infection rates are large enough, or when this holds for the asymptomatic infection rates and~$\gamma$ is small or for the symptomatic infection rates and~$\gamma$ is large.
 At least when~$N > 1$, the same results hold for the spatial model.
\begin{theorem}
\label{th:philarge}
 For all~$N > 1$, there is survival when
\begin{itemize}
\item[a.]
 $\lambda_1, \lambda_2 > 0$ and~$\phi_1, \phi_2 > \phi_+$ for some~$\phi_+ < \infty$, \vspace*{4pt}
\item[b.]
 $\lambda_1 > 0$ and~$\phi_1 > \phi_+$ and~$\gamma < \gamma_-$ for some~$\phi_+ < \infty$ and~$\gamma_- > 0$, \vspace*{4pt}
\item[c.]
 $\lambda_2 > 0$ and~$\phi_2 > \phi_+$ and~$\gamma > \gamma_+$ for some~$\phi_+ < \infty$ and~$\gamma_+ < \infty$.
\end{itemize}
\end{theorem}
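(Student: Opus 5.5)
The idea is to use a block construction and compare the process with supercritical oriented site percolation, as in Bezuidenhout and Grimmett and in the analysis of $\irp (\lambda, \phi, N)$ by Belhadji and Lanchier. The mechanism is this. When the internal infection rates are large and $N > 1$, an infected cluster stays infected for a very long time. Within a cluster with at least one infected individual, the number of infected individuals moves up quickly and down at rate at most $N$. Hence the expected time before the cluster becomes fully healthy grows like a power of $\phi$, roughly $\phi^{N-1}$ times a constant. During this time the cluster carries at least one infected individual almost all the time, so it tries to infect each neighbor at rate at least $\min (\lambda_1, \lambda_2) > 0$. Even small external rates therefore infect all neighbors with high probability before the cluster recovers.

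For part a, I first couple the process with a simpler ``cluster-level'' process. Say a cluster is \emph{active} when $\xi_t (x) \neq (0, 0)$. Any infected individual, whatever its type, infects healthy individuals in its own cluster at rate at least $\phi_- = \min (\phi_1, \phi_2)$ and neighboring healthy clusters at rate at least $\lambda_- = \min (\lambda_1, \lambda_2)$. Collapse types by setting $\zeta_t (x) = i + j$. Using the graphical representation, with arrows of rate $\min$ plus extra arrows used only by the appropriate type, the process $\zeta_t$ stochastically dominates $\irp (\lambda_-, \phi_-, N)$. The type changes at rate $\gamma$ do not alter $i + j$, so they are harmless. Part a then follows from the result on the individual recovery process: it survives for every $\lambda > 0$ once $\phi$ is large and $N > 1$. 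If that result is not available in the form I need, I will reprove it by a block construction. Fix $\lambda_- > 0$. Choose a time scale $T$ such that $\lambda_- T$ is large, so that on a box of size $L$ infection crosses from one block to the next with probability close to one. Then choose $\phi$ so large that, with probability $1 - \epsilon$, no cluster along the relevant path recovers fully during $[0, T]$. A lower bound on this holding time comes from a birth–death chain on $\{0, \ldots, N\}$ started at $1$, with up rate $\ge \phi_-$ and down rate $\le N$.

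For parts b and c the types matter, because only one pair of rates is assumed large. In part b I use only asymptomatic individuals. Every infected individual starts asymptomatic and stays so for an exponential time with rate $\gamma$ before becoming symptomatic. Once it turns symptomatic it is treated as recovered. This means I thin the arrows: the mutation rate $\gamma$ is added to the recovery rate, and symptomatic individuals are ignored. The process restricted to asymptomatic individuals then dominates $\irp (\lambda_1, \phi_1, N)$ with death rate $1 + \gamma$ instead of $1$. After rescaling time by $1 + \gamma$, this is $\irp (\lambda_1 / (1 + \gamma), \phi_1 / (1 + \gamma), N)$. Since $\lambda_1 > 0$ is arbitrary, no smallness of $\gamma$ is even needed in principle, only $\phi_1$ large relative to $1 + \gamma$. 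Taking $\gamma < \gamma_- = 1$ gives a uniform $\phi_+$ that depends on $\lambda_1$.

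Part c is the delicate case, since newly infected individuals are always asymptomatic, and asymptomatic individuals may have $\lambda_1 = \phi_1 = 0$. When $\gamma$ is large, each new infection becomes symptomatic after an exponential time with rate $\gamma$. The probability that it recovers first is $1 / (1 + \gamma)$. I plan to treat a successful ``infection'' as an event that produces a symptomatic individual. This defines a coupling with an $\irp$ in which each birth is retained independently with probability $\gamma / (1 + \gamma)$, with a delay of order $1 / \gamma$. This delay breaks the Markov comparison directly. I therefore expect the main obstacle to lie in this step. I would handle it inside the block construction rather than through a global coupling. During a block of length $T$, the number of birth attempts is finite with high probability. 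If $\gamma > \gamma_+$ is large, all of them convert to symptomatic within time $\delta$ with probability $1 - \epsilon$, and no delay or failure matters on the scale of $T$. The block event for $\irp (\lambda_2, \phi_2, N)$ then transfers to the real process.
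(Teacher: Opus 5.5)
Your part a matches the paper's argument: the sandwich \eqref{eq:irp-irpa} plus survival of $\irp(\lambda,\phi,N)$ for $\phi$ large. Your part c, a perturbation at $\gamma=\infty$ carried out inside each block, also follows the paper's route. The gap is in part b.

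\textbf{The domination in part b is false.} A mutated individual is not removed. It stays in its cluster as a 2 for an exponential time with rate one. While it is there it occupies a level and makes the cluster not fully healthy. This breaks your coupling in two ways:
\begin{itemize}
\item A cluster with $i$ 1s and $N-i$ 2s receives no internal infections, although the killed process has $N-i$ free levels there.
\item A cluster containing only 2s cannot be infected from outside, although it is healthy in the killed process.
\end{itemize}
When $\lambda_2=\phi_2=0$, which part b allows, the 2s are pure obstacles. Your coupling then creates 1s that $\xi_t$ does not have.

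\textbf{The conclusion is false too, not only its proof.} Take $N=2$, $\lambda_2=\phi_2=0$, $\gamma=1/2$.
\begin{itemize}
\item From $(2,0)$ the cluster moves to $(1,1)$ at rate $2\gamma$.
\item From $(1,1)$ it loses its last 1 at rate $1+\gamma$, against rate one for the 2 to recover and be replaced.
\item The first-step equations give the expected integral of the number of 1s over one infection episode as $(10+8\phi_1)/(15+2\phi_1)\le 4$, for every $\phi_1$.
\end{itemize}
So each infected cluster sends on average at most $8d\lambda_1$ external infections. For $\lambda_1<1/(8d)$ the infection dies out however large $\phi_1$ is. Hence $\gamma_-=1$ cannot work, and $\gamma_-$ must be small depending on $\lambda_1$. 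With $\phi_1=\infty$ and $N=2$, an infected cluster lives only for a time of order $1/\gamma$.

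\textbf{The missing step is the one you already flagged in part c, with the types exchanged.} The unwanted type blocks levels and blocks external infections. In part c it disappears quickly because it converts. In part b the 2s persist for an exponential time with rate one, so they cannot be thinned away globally. You must instead perturb at $\gamma=0$ inside a bounded block. The paper does this as follows:
\begin{enumerate}
\item Fix $L$ and work with three clusters over a time window of length $2L$.
\item Require a death mark at every level of the two neighboring clusters during the first $L$ units of time, which removes any pre-existing 2s.
\item Require 1-arrows from level one of the central cluster to both neighbors during the next $L$ units of time.
\item Require a 1-bullet at level one between consecutive death marks; this is where $\phi_1>\phi_+$ enters.
\item Only then choose $\gamma_-=\gamma_-(\epsilon,L)$ so that, with high probability, there is no $\circ$ in the block, hence no 2 is ever created there.
\end{enumerate}
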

\noindent
 The idea in this case is that, the time infected clusters remain infected can be made arbitrarily large by choosing the internal infection rates large, so even if the external infections are rare, the infection can slowly expand.
 Looking now at the extinction phase, the mean-field model dies out when the external infection rates are smaller than one and the internal infection rates are small enough, or when this holds for the asymptomatic infection rates and~$\gamma$ is small or for the symptomatic infection rates and~$\gamma$ is large.
 The same results again hold for the spatial model regardless of the size of the clusters, as stated in the following theorem.
\begin{theorem}
\label{th:phismall}
 There is extinction when
\begin{itemize}
\item[a.]
 $\lambda_1, \lambda_2 < \lambda_c$ and~$\phi_1, \phi_2 < \phi_-$ for some~$\phi_- > 0$, \vspace*{4pt}
\item[b.]
 $\lambda_1 < \lambda_c$ and~$\phi_1 < \phi_-$ and~$\gamma < \gamma_-$ for some~$\phi_- > 0$ and~$\gamma_- > 0$, \vspace*{4pt}
\item[c.]
 $\lambda_2 < \lambda_c$ and~$\phi_2 < \phi_-$ and~$\gamma > \gamma_+$ for some~$\phi_- > 0$ and~$\gamma_+ < \infty$.
\end{itemize}
\end{theorem}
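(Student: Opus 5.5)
Theorem~\ref{th:phismall} is proved by comparison with a subcritical contact process in which each cluster is a single site. The key observation is that when the internal infection rates vanish, a cluster that receives an external infection contains exactly one infected individual. The number of infected individuals in each cluster then stays at most one, and the process reduces to~$\cpa (\lambda_1, \lambda_2, \gamma)$. For part~a, with~$\phi_1 = \phi_2 = 0$, this process is dominated by~$\cp (\max (\lambda_1, \lambda_2))$: paint every infected site infected, and note that the per-neighbor infection rate is at most~$\max (\lambda_1, \lambda_2) < \lambda_c$, while the recovery rate is one for both types. The limit process therefore dies out.

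To pass to small positive~$\phi_1, \phi_2$, I will use a block construction in the style of Bezuidenhout and Grimmett. Since~$\max (\lambda_1, \lambda_2) < \lambda_c$, the contact process with that parameter has exponentially decaying survival. Fix~$\lambda' \in (\max (\lambda_1, \lambda_2), \lambda_c)$. Then~$\cp (\lambda')$ started from one site dies out, and its infected set is contained in a ball of radius~$L$ up to time~$T$, with probability close to one. I will also use the standard finite-space-time characterization of extinction for subcritical contact processes: there exist~$L, T$ such that, starting from a single infected site, the process dies by time~$T$ and stays within distance~$L$ with probability at least~$1 - \epsilon$. An equivalent route is sharpness, namely that the expected total infected space-time volume is finite for~$\lambda' < \lambda_c$.

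Next I will couple our process with a process in which each cluster carries only its count of infected individuals. Use the graphical representation: internal infection arrows occur at rate~$\phi_1$ or~$\phi_2$ per infected individual. On a space-time box of size~$L \times T$ started from a single infected cluster, the probability that any internal infection arrow is ever used is bounded by~$\max (\phi_1, \phi_2) \, \mathbb{E} [\hbox{total infected time}]$. This bound tends to zero as~$\phi_1, \phi_2 \to 0$ because the comparison process has finite expected space-time volume. On the event that no internal arrow is used, the cluster process coincides with the~$\cpa$ process and hence is dominated by~$\cp (\max (\lambda_1, \lambda_2))$, so it dies out within the box. Extinction then follows from the subcritical analogue of the block construction, with two equivalent routes. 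One is the comparison with subcritical oriented percolation, where a block is ``bad'' if the infection escapes or fails to die. The other bounds~$P (\xi_t (x) \neq 0)$ by a branching-type argument: each infected cluster produces, in expectation, fewer than one ``escaping'' descendant cluster at distance~$L$ by time~$T$. The proof works with bounded-range dependence throughout, so the finite~$N$ is irrelevant beyond ensuring bounded rates.

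Parts~b and~c follow the same scheme with~$\cpa$ in place of the contact process. When~$\gamma$ is small, the~$\cpa$ process with~$\lambda_1 < \lambda_c$ behaves like~$\cp (\lambda_1)$ up to a small perturbation: over a finite box the probability of any mutation is~$O (\gamma)$, so it is subcritical in the finite-box sense above. This uses only~$\phi_1$ small, since on the no-mutation event no symptomatic individuals are present and~$\phi_2$ is never used. When~$\gamma$ is large, each newly infected individual becomes symptomatic almost immediately. The time spent asymptomatic is exponential with rate~$1 + \gamma$, and on the box the total asymptomatic infections and asymptomatic external infections have probability~$O (1/\gamma)$. The process therefore behaves like~$\cp (\lambda_2)$ with~$\phi_2$ small. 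The main obstacle is the step that turns finite-box closeness into actual extinction of the infinite system. This requires a robust finite-volume criterion for subcritical extinction that is stable under small perturbations of the rates. I would first try the expected-escape argument: choose~$L, T$ so that the expected number of infected clusters on the boundary of the~$L \times T$ space-time box, starting from one infected cluster, is less than~$1/2$ for the reference process. The same bound then holds for the perturbed process, by continuity of this finite-volume quantity in~$(\phi_1, \phi_2, \gamma)$ and~$1/\gamma$. Iterating this bound through a subcritical branching comparison gives~$P (\xi_t (x) \neq 0) \to 0$.
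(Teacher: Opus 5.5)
\paragraph{Where the proposal matches the paper.} For parts b and c you use the same strategy as the paper. You perturb from the limits $\phi_1 = \gamma = 0$, respectively $\phi_2 = 0$ and $\gamma = \infty$, where the dynamics is a subcritical contact process, and you finish with a block construction. Your per-box perturbation estimates are fine: $O(\gamma)$ for a mutation, $O(1/\gamma)$ for a 1 infecting before it mutates, and $O(\phi)$ for an internal infection.

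\paragraph{The gap.} The gap is the step you yourself flag as the main obstacle, and the route you propose for it fails as stated. Iterating an expected-escape bound ``through a subcritical branching comparison'' requires the process started from many infected clusters to be dominated by independent copies started from each one, i.e.\ additivity. The \irpa\ has no such property. Moreover, $\lambda_2, \phi_2$ (resp.\ $\lambda_1, \phi_1$) are arbitrary in part b (resp.\ c), so the couplings of Section~\ref{sec:GR} do not even give monotonicity.

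All your estimates are also for a start from a single infected cluster. The process instead starts from a translation-invariant configuration with positive densities of 1s and 2s, possibly several infected individuals per cluster. In addition, infection keeps entering any box from outside at arbitrary times. Two of your claims fail for this reason:
\begin{itemize}
\item In part b it is false that ``no symptomatic individuals are present and $\phi_2$ is never used''. The initial 2s are present and seed 1s at the arbitrary rates $\lambda_2, \phi_2$ until they recover.
\item In part a, domination by $\cp (\lambda_1 \vee \lambda_2)$ at $\phi_1 = \phi_2 = 0$ only holds once every cluster has at most one infected individual.
\end{itemize}

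\paragraph{The missing idea.} You need a block event that forgets the configuration and uses additivity only where it actually holds. The paper does this as follows.
\begin{itemize}
\item It requires a death mark at every site of $\C_{m, n}$ during $[nL, (n+1)L)$ and again during $[(n+1)L, (n+2)L)$, together with no $\circ$ and no \bul{1} in $\C_{m, n}$.
\item The first sweep removes all 2s and the second leaves at most one 1 per cluster. Inside $\B_{m, n}$ the dynamics is then exactly $\cp (\lambda_1)$.
\item This process is additive, so it is dominated by the superposition of contact processes started at the entry points on the bottom and lateral boundaries of $\B_{m, n}$. The number of lateral entry points has a Poisson tail.
\item Exponential decay in space and time of the subcritical contact process keeps each of these polynomially many paths out of $\A_{m, n}$.
\end{itemize}
The resulting event is measurable with respect to the graphical representation in $\C_{m, n}$ and has probability at least $1 - \ep$. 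It also forces $\A_{m, n}$ to be healthy whatever happens outside. Comparison with $M$-dependent percolation on $\vec{\lat}_-$ and Lemma~\ref{lem:perco-2} then conclude.

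\paragraph{Part a.} The paper does not perturb at all here. It combines the known extinction of $\irp (\lambda, \phi, N)$ for $\lambda < \lambda_c$ and $\phi$ small (Belhadji--Lanchier) with the sandwich~\eqref{eq:irp-irpa}. This is a much shorter route than re-deriving that result, and it avoids the gap above.
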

\noindent
 Intuitively, this result follows from the fact that, when the internal infection rates are small, most clusters have at most one infected individuals so the process is essentially dominated by a subcritical contact process, which leads to extinction. \vspace*{5pt} \\
\noindent {\bf Cluster size}.
 Our analysis of the mean-field model reduces to the~$N = 2$ case, but our analysis of the spatial model shows again a phase transition with respect to the cluster size.
 First, we prove survival when the external infection rates are positive and the internal infection rates larger than one, or when this holds for the asymptomatic infection rates and~$\gamma$ is small or for the symptomatic infection rates and~$\gamma$ is large.
\begin{theorem}
\label{th:Nlarge}
 There is survival when
\begin{itemize}
\item[a.]
 $\lambda_1, \lambda_2 > 0$ and~$\phi_1, \phi_2 > 1$ and~$N > N_+$ for some~$N_+ < \infty$, \vspace*{4pt}
\item[b.]
 $\lambda_1 > 0$ and~$\phi_1 > 1$ and~$N > N_+$ and~$\gamma < \gamma_-$ for some~$N_+ < \infty$ and~$\gamma_- > 0$, \vspace*{4pt}
\item[c.]
 $\lambda_2 > 0$ and~$\phi_2 > 1$ and~$N > N_+$ and~$\gamma > \gamma_+$ for some~$N_+ < \infty$ and~$\gamma_+ < \infty$.
\end{itemize}
\end{theorem}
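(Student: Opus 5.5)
Part a is proved first, and parts b and c follow by the same argument after a change of which infected type we track. The plan is a block construction comparing the process with supercritical oriented site percolation, as in Bezuidenhout and Grimmett~\cite{bezuidenhout_grimmett_1990} and in the analysis of $\irp (\lambda, \phi, N)$ in~\cite{belhadji_lanchier_2005}. The key point is the following. When $\phi_1, \phi_2 > 1$, the number $i + j$ of infected individuals inside a single cluster, in the absence of external infection, is a birth and death chain. Its birth rate is $i \phi_1 + j \phi_2 \geq \min (\phi_1, \phi_2) (i + j)$ while $i + j < N$, and its death rate is exactly $i + j$. Hence, by a standard coupling, $i + j$ dominates a chain with birth rate $\phi k$ and death rate $k$ with $\phi = \min (\phi_1, \phi_2) > 1$, truncated at $N$.

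\textbf{Step 1: one infected individual creates a long-lived cluster.} Starting from one infected individual, this supercritical chain reaches a level $\epsilon N$ with probability at least $1 - 1/\phi - \epsilon$, uniformly in large $N$. Once it is near its quasi-stationary level $N (1 - 1/\phi)$, it stays above $\epsilon N$ for a time exponentially large in $N$: the expected exit time from $[\epsilon N, N]$ grows like $e^{cN}$ by the usual ratio-of-rates computation for birth and death chains. During such a period the cluster emits external infections at rate at least $2d \min (\lambda_1, \lambda_2) \epsilon N$ to each neighbor. This rate tends to infinity with $N$ even though $\lambda_1, \lambda_2$ are small.

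\textbf{Step 2: block event.} Fix a time scale $T$ (say $T = 1$). Declare site $(z, n)$ good if cluster $z$ contains at least $\epsilon N$ infected individuals at time $nT$. Given this, with probability close to one we obtain three things:
\begin{itemize}
\item[(i)] cluster $z$ keeps at least $\epsilon N$ infected individuals during $[nT, (n+1)T]$;
\item[(ii)] each of the two neighbors $z \pm e_1$ receives an external infection early, say before time $nT + T/4$;
\item[(iii)] this infection grows to $\epsilon N$ infected individuals by time $(n+1)T$.
\end{itemize}
Point (iii) is the delicate one. The success probability of growth from a single individual is only about $1 - 1/\phi$, not close to one. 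We fix this by using the many external infections: the source cluster sends order $\epsilon N T$ infection attempts, and only the first one hitting a healthy cluster counts. To handle this, I would restart the argument: if the first seed dies out, which happens quickly (in time $O(1)$ with high probability), a new seed arrives at rate of order $N$. The number of trials in $[nT, nT + T/2]$ therefore goes to infinity with $N$, and at least one succeeds with high probability. Growth from order one to $\epsilon N$ takes time of order $\log N / (\phi - 1)$, which is not bounded. So I would take $T = T_N = C \log N$ and check that all the high probability bounds survive this choice; they do, since the failure probabilities are exponentially small in $N$ or polynomial with large exponent. The good events depend only on the graphical representation in a bounded space-time box. Comparison with oriented percolation with a parameter close to one, via~\cite[Theorem~4.3]{durrett_1995}-style arguments, then gives survival. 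Letting $N \to \infty$ gives $N_+$.

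\textbf{Step 3: parts b and c.} For b, only $\lambda_1, \phi_1$ are positive-useful. We restrict attention to asymptomatic individuals and treat a mutation to the symptomatic state as a death. The asymptomatic count then dominates a chain with birth rate $\phi_1 i$ and death rate $(1 + \gamma) i$, which is supercritical when $\gamma < \gamma_- = \phi_1 - 1$. Steps 1 and 2 then apply verbatim with $\phi = \phi_1/(1 + \gamma)$. For c, the newly infected individuals are asymptomatic. When $\gamma$ is large, an asymptomatic individual becomes symptomatic before recovering with probability $\gamma/(1 + \gamma)$ and in time $O(1/\gamma)$. The symptomatic count then behaves like a chain with birth rate roughly $\phi_2 \gamma/(1 + \gamma)$ per individual and death rate one, which is supercritical for $\gamma > \gamma_+$. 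The main obstacle, I expect, is making this last comparison rigorous. It is a two-type chain, so I would compare it with an embedded chain counting symptomatic individuals plus pending asymptomatic ones, using a coupling in which each asymptomatic birth is thinned independently with probability $\gamma/(1 + \gamma)$. A second technical point is the $\log N$ time scale in Step 2.
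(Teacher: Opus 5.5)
Your plan can probably be made to work, but it follows a different route from the paper, and one claim in Step~3 is false as stated.

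\textbf{Comparison with the paper.} For part~a the paper does no block construction. The coupling of Section~\ref{sec:GR} gives $\irp (\lambda_1 \wedge \lambda_2, \phi_1 \wedge \phi_2, N) \leq \xi_t$, see~\eqref{eq:irp-irpa}, and survival for $N$ large is then Theorem~5 of~\cite{belhadji_lanchier_2005}. Your Steps~1--2 essentially reprove that theorem. Your domination of $i + j$ is the same comparison done cluster by cluster, which has the small merit of not depending on how the $\lambda$'s and $\phi$'s are ordered.

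For parts~b and~c the paper never compares the process with a supercritical chain at positive~$\gamma$. It first works at $\gamma = 0$ (resp.\ $\gamma = \infty$), where inside a block the process is a one-type individual recovery process. A good site means that $m e_1$ holds at least $K = \lfloor N/2 \rfloor$ 1s and no 2s throughout $[2nL, (2n+2)L]$. The time scale $L = 4N/(p-q)$ is linear in~$N$. State~0 is left at rate at least $K \lambda_1 > 1$, so it acts as a reflecting boundary, and no restart argument is needed. Only then does the paper perturb in~$\gamma$: with $N$ and $L$ fixed, it takes $\gamma$ so small that the block has no~$\circ$ (Lemma~\ref{lem:perturb-1}), or so large that every potential infection is followed by a mutation at each level of the cluster before the next one (Lemma~\ref{lem:perturb-2}). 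This needs only one-type random-walk estimates (Lemmas~\ref{lem:RW1}--\ref{lem:RW3}), at the price of thresholds $\gamma_\pm$ depending on~$N$. Your direct comparison could give thresholds uniform in~$N$. However, it forces you to control the two-type dynamics inside a cluster, which you have not done.

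\textbf{The false claim in Step~3.} In part~b the asymptomatic count does not dominate the chain with up-rate $\phi_1 i$ (truncated at~$N$) and down-rate $(1+\gamma) i$. Internal infections are blocked as soon as $i + j = N$, so symptomatic individuals take up room. In a full cluster the comparison chain can still move up while $i$ cannot. This is not a negligible boundary effect: for fixed $\gamma > 0$ the 2s form a positive fraction of a well-infected cluster (heuristically $j \approx \gamma i$ and $i + j \approx N$). So ``verbatim'' is wrong, and $\gamma_- = \phi_1 - 1$ is not justified by your argument. Likewise, a neighbor holding only 2s cannot be seeded, and when $\phi_2 = 0$ it needs time of order $\log N$ to clear.

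\textbf{Repair.} Since the theorem only asks for some $\gamma_- > 0$, the fix is cheap.
\begin{itemize}
\item $j$ increases at rate $\gamma i \leq \gamma N$ and decreases at rate~$j$, so it is dominated by an $M/M/\infty$ queue with arrival rate $\gamma N$.
\item For $\gamma$ small, $j$ then stays below $N/3$ on your time window with high probability. A cluster that starts with many 2s needs a burn-in of order $\log N$ first.
\item On this event $i$ dominates the biased chain truncated at $2N/3$, which is all Steps~1--2 need once $\epsilon$ is small.
\item A bound such as $j \leq N/4$ must then be part of the definition of a good site, since unlike the paper you cannot ask for no 2s.
\end{itemize}
The same room bookkeeping is needed in part~c for asymptomatic individuals destined to recover. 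Otherwise your thinning idea works well there. When there is room, the up-jumps of $S + P$ (symptomatic plus pending asymptomatic) occur at rate at least $\phi_2 \gamma S/(1+\gamma)$, and its down-jumps at rate exactly~$S$. So its embedded jump chain dominates a biased walk with odds $\phi_2 \gamma/(1+\gamma) > 1$ for $\gamma > 1/(\phi_2 - 1)$.

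\textbf{Minor points.} A good site needs two thresholds (reach $2\epsilon N$, then stay above $\epsilon N$), like the paper's pair $N$ and~$K$. A cluster that reaches $\epsilon N$ just before time $(n+1)T$ need not be above $\epsilon N$ at that time. Also, since infections occur at rate $\phi k$ all the way up to~$N$, the quasi-stationary level is near~$N$, not $N(1 - 1/\phi)$; this is harmless.
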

\noindent
 The intuition behind the result is that, when the internal infection rates are larger than one, and so larger than the recover rate, the time infected clusters remain infected can be made arbitrarily large by choosing the cluster size large, so even if the external infections are rare, the infection can slowly expand.
 In contrast, when the external infection rates are less than~$\lambda_c$, or when this holds for the asymptomatic infection rates and~$\gamma$ is small or for the symptomatic infection rates and~$\gamma$ is large, there is extinction when the cluster size is small enough.
\begin{theorem}
\label{th:Nsmall}
 There is extinction when
\begin{itemize}
\item[a.]
 $\lambda_1, \lambda_2 < \lambda_c$ and~$N < N_-$ for some~$N_- > 1$, \vspace*{4pt}
\item[b.]
 $\lambda_1 < \lambda_c$ and~$N < N_-$ and~$\gamma < \gamma_-$ for some~$N_- > 1$ and~$\gamma_- > 0$, \vspace*{4pt}
\item[c.]
 $\lambda_2 < \lambda_c$ and~$N < N_-$ and~$\gamma > \gamma_+$ for some~$N_- > 1$ and $\gamma_+ < \infty$.
\end{itemize}
\end{theorem}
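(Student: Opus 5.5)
The plan is to take $N_- = 2$, so the statement reduces to the case $N = 1$, and then to deduce all three parts from Theorem~\ref{th:phismall}. The key observation is that when $N = 1$ the internal infection mechanism is inert. Each cluster contains a single individual, so its state is one of $(0,0)$, $(1,0)$ or $(0,1)$. The transition $(i,j) \to (i+1,j)$ at rate $i\phi_1 + j\phi_2$ requires $i + j < N = 1$, which forces $i = j = 0$, and then the rate is zero.

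The first step is therefore to check directly from the transition rates that, for $N = 1$, the process $\xi_t$ does not depend on $(\phi_1, \phi_2)$. In the graphical representation, the Poisson processes attached to internal infections never produce a transition, and the remaining dynamics coincide with $\cpa(\lambda_1, \lambda_2, \gamma)$ under the identification $(0,0)\mapsto 0$, $(1,0)\mapsto 1$, $(0,1)\mapsto 2$. Consequently, for fixed $(\lambda_1, \lambda_2, \gamma)$ and $N = 1$, the law of the process, and hence whether it survives or dies out, is the same for every choice of $\phi_1, \phi_2 \geq 0$.

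The second step is to invoke Theorem~\ref{th:phismall} at $N = 1$, with the internal rates fixed at $\phi_1 = \phi_2 = 0$, which lie below any $\phi_- > 0$. One point needs checking here: the constants $\phi_-$, $\gamma_-$, $\gamma_+$ in Theorem~\ref{th:phismall} must be available for cluster size one. The paper asserts that Theorem~\ref{th:phismall} holds regardless of cluster size, so fixing $N = 1$ and $\phi_1 = \phi_2 = 0$ is legitimate. This gives the following.
\begin{itemize}
\item[a.] For $\lambda_1, \lambda_2 < \lambda_c$, part~a of Theorem~\ref{th:phismall} yields extinction.
\item[b.] For $\lambda_1 < \lambda_c$ and $\gamma < \gamma_-$, part~b yields extinction, where $\gamma_-$ is the constant produced there at $\phi = 0$.
\item[c.] For $\lambda_2 < \lambda_c$ and $\gamma > \gamma_+$, part~c yields extinction, with $\gamma_+$ chosen similarly.
\end{itemize}
By the first step, the conclusion transfers to arbitrary $\phi_1, \phi_2$, and this gives Theorem~\ref{th:Nsmall} with $N_- = 2$.

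As a sanity check for part~a that avoids Theorem~\ref{th:phismall}, one can couple directly. With $N = 1$, the set of infected sites of $\xi_t$ is dominated by $\cp(\max(\lambda_1, \lambda_2))$, since each infected site infects each healthy neighbor at rate at most $\max(\lambda_1, \lambda_2) < \lambda_c$ and recovers at rate one. Extinction then follows from the subcriticality of the contact process.

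The main obstacle is parts~b and~c, whose real content is a perturbation argument for $\cpa$. For part~b, when $\gamma$ is small, symptomatic individuals, who may infect at rate $\lambda_2 > \lambda_c$, are rare. One then compares with a subcritical $\cp(\lambda_1)$ via a block construction: a finite-box estimate shows that $\cp(\lambda_1)$ dies out quickly in space-time boxes, and this persists with high probability when $\gamma$ is small enough. Part~c is the mirror argument, using that for $\gamma$ large asymptomatic phases are short. Since this argument is exactly what Theorem~\ref{th:phismall} provides, the proof here is just the reduction described above, with the care going into verifying the independence from $\phi_1, \phi_2$ at $N = 1$.
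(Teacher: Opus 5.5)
Your argument is correct, and its central step is the same as the paper's. At $N = 1$ the internal-infection bullets can never act, so $\irpa(\lambda_1, \lambda_2, \phi_1, \phi_2, \gamma, 1) = \cpa(\lambda_1, \lambda_2, \gamma)$ for every $\phi_1, \phi_2$; this is exactly \eqref{eq:cpa}. Everything then reduces to extinction of the \cpa, with in effect $N_- = 2$.

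The difference is where that extinction comes from. The paper imports it:
\begin{itemize}
\item for parts b and c it quotes the known perturbation results for the \cpa\ from \cite{belhadji_lanchier_mercer_2024};
\item for part a it combines the sandwich \eqref{eq:irp-irpa} with the small-$N$ result for the \irp\ from \cite{belhadji_lanchier_2005}. At $N = 1$ this is just your domination by $\cp(\lambda_1 \vee \lambda_2)$.
\end{itemize}

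You instead plug $\phi_1 = \phi_2 = 0$ into Theorem~\ref{th:phismall} at $N = 1$. This is legitimate for three reasons. That theorem places no restriction on $N$. Its block construction in Section~\ref{sec:phismall} goes through unchanged for one-level clusters, where the second sweep $G_{m,n}^2$ simply becomes superfluous. Its proof does not use Theorem~\ref{th:Nsmall}, so there is no circularity. Also, the constants $\gamma_\pm$ you obtain at $\phi = 0$ serve for all $\phi_1, \phi_2$ precisely because of your first step.

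Your route is self-contained within the paper: Theorem~\ref{th:phismall}.b--c at $N = 1$ is in effect the paper's own reproof of the \cpa\ perturbation results. The cost is invoking a block-construction theorem where the paper needs only a citation.
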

\noindent
 The idea here is that, in the~$N = 1$ case where the clusters reduce to a single individual, the process reduces to a subcritical contact process, and so dies out. \vspace*{5pt} \\
\noindent {\bf Mutation rate}.
 Intuitively, when the asymptomatic infection rates are high but the symptomatic infection rates low, there should be survival for~$\gamma$ small but extinction for~$\gamma$ large, whereas when the asymptomatic infection rates are low but the symptomatic infection rates high, there should be extinction for~$\gamma$ small but survival for~$\gamma$ large.
 In particular, neither the mean-field model nor the spatial model should be monotone with respect to~$\gamma$.
 In fact, from the proposition, we can deduce the existence of infection rates for which there are two phase transitions with respect to the mutation rate:
 extinction for small values of~$\gamma$, then survival for intermediate values of~$\gamma$, and then again extinction for large values of~$\gamma$~(see Figure~\ref{fig:MF2} below).
 This confirms the lack of monotonicity for the mean-field model.
 We believe that an analogous result holds for the spatial model. However, to keep the present paper concise, we leave the study of this case to future work. \vspace*{5pt} \\
 The rest of the paper is devoted to the proofs of Proposition~\ref{prop:MF} and Theorems~\ref{th:lambdalarge}--\ref{th:Nsmall}.
 We first study the mean-field model, proving the proposition as well as important implications on the phase structure~(section~\ref{sec:MF}).
 Then, we focus on the interacting particle system, explaining first how to construct the process graphically and couple/compare the process with the individual recovery process and the contact process with an asymptomatic state~(section~\ref{sec:GR}).
 These couplings are used to prove some of the theorems~(section~\ref{sec:irp-cpa}), while the remaining theorems require the use of various block constructions and perturbation techniques.
 Also, before proving these more intricate theorems, we collect a few percolation results~(section~\ref{sec:perco}) used later in the block constructions, and show some general perturbation results for~$\gamma$ small and~$\gamma$ large~(section~\ref{sec:perturb}).
 Finally, we prove the main survival results by coupling the process properly rescaled in space and time with a certain type of percolation process~(sections~\ref{sec:lambdalarge}--\ref{sec:Nlarge}), and the main extinction results by coupling the process with another type of percolation process~(sections~\ref{sec:lambdasmall}--\ref{sec:phismall}).
 Depending on the parameter region under consideration, these block constructions also involve coupling arguments, duality results, random-walk estimates, martingale techniques, and large deviations theory.
 

\section{Mean-field model}
\label{sec:MF}
 In this section, we study the mean-field model of the individual recovery process with an asymptomatic state and clusters of size two, prove Proposition~\ref{prop:MF}, and deduce several other results, including the analogs of Theorems~\ref{th:lambdalarge}--\ref{th:Nsmall}.
 Assuming that the population is homogeneously mixing, and letting~$u_{ij}$ be the density of clusters with~$i$ asymptomatic individuals and~$j$ symptomatic individuals, the process is described by the following system of coupled differential equations:
\begin{equation}
\label{eq:MF1}
\begin{array}{rcl}
  u_{00}' & \n = \n & - \,(\lambda_1 u_{10} + \lambda_2 u_{01} + 2 \lambda_1 u_{20} + (\lambda_1 + \lambda_2) u_{11} + 2 \lambda_2 u_{02}) u_{00}
            \vspace*{4pt} \\ && \hspace*{20pt} + \,u_{10} + u_{01}, \vspace*{4pt} \\
  u_{10}' & \n = \n & + \,(\lambda_1 u_{10} + \lambda_2 u_{01} + 2 \lambda_1 u_{20} + (\lambda_1 + \lambda_2) u_{11} + 2 \lambda_2 u_{02}) u_{00}
            \vspace*{4pt} \\ && \hspace*{20pt} - \,(1 + \phi_1 + \gamma) u_{10} + 2 u_{20} + u_{11}, \vspace*{4pt} \\
  u_{01}' & \n = \n & + \,\gamma u_{10} - (1 + \phi_2) u_{01} + u_{11} + 2 u_{02}, \vspace*{4pt} \\
  u_{20}' & \n = \n & + \,\phi_1 u_{10} - 2 (1 + \gamma) u_{20}, \vspace*{4pt} \\
  u_{11}' & \n = \n & + \,\phi_2 u_{01} + 2 \gamma u_{20} - (2 + \gamma) u_{11}, \vspace*{4pt} \\
  u_{02}' & \n = \n & + \,\gamma u_{11} - 2 u_{02}. \end{array}
\end{equation}
 The expression of the derivatives can be understood from Figure~\ref{fig:states}, which shows how the states are connected and the~(spontaneous) transition rates.
\begin{figure}[t!]
\centering
\scalebox{0.40}{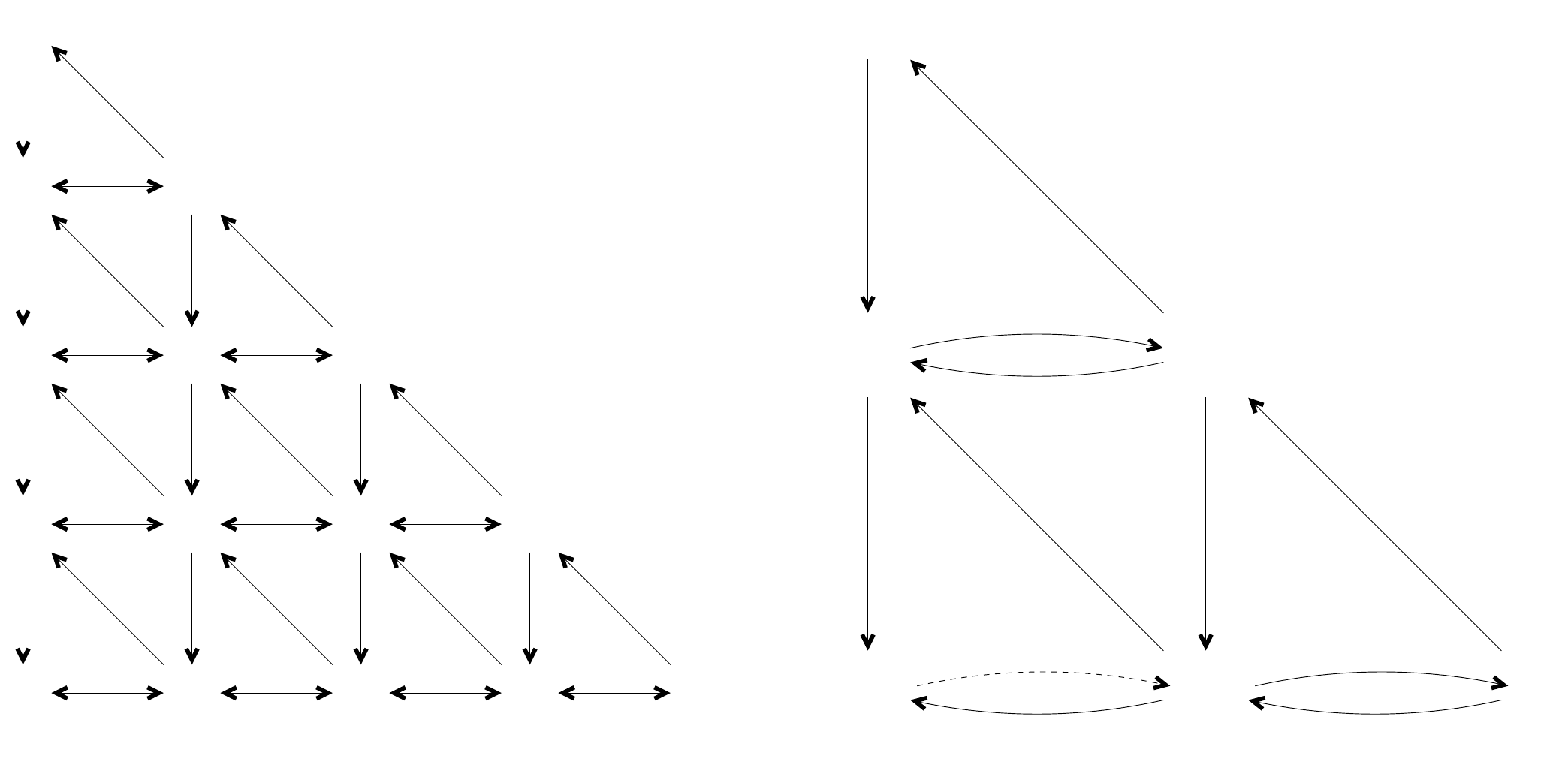}
\caption{\upshape{
 Left: Picture of all the possible transitions when~$N = 4$.
 Arrows~$\rightarrow$ correspond to an infection,~$\leftarrow$ to the recovery of an asymptomatic individual,~$\downarrow$ to the recovery of a symptomatic individual, and~$\nwarrow$ to a mutation from asymptomatic to symptomatic.
 Right: Transition rates when~$N = 2$ except for the transition~$00 \to 10$ which is density-dependent.
 In both pictures,~$ij$ is a short for state~$(i, j)$.}}
\label{fig:states}
\end{figure}
 The solution curves are contained in the standard 5-dimensional simplex, which cannot be visualized in a picture.
 Figures~\ref{fig:MF1} and~\ref{fig:MF2} below show instead a projection in the~$(u_1, u_2)$ plane of the solution curves where
 $$ \textstyle
     u_1 = \frac{1}{2} u_{10} + \frac{1}{2} u_{11} + u_{20} \quad \hbox{and} \quad
     u_2 = \frac{1}{2} u_{01} + \frac{1}{2} u_{11} + u_{02} $$
 represent the fraction of individuals that are asymptomatic/symptomatic. \\ \\
\begin{proofof}{Proposition~\ref{prop:MF}}
 Setting the right-hand side of the system~\eqref{eq:MF1} equal to zero, it follows from the fourth and last equations that
\begin{equation}
\label{eq:MF2}
  u_{20} = \frac{\phi_1}{2 (1 + \gamma)} \,u_{10} \quad \hbox{and} \quad u_{02} = \frac{\gamma}{2} \,u_{11}.
\end{equation}
 Substituting in the third and fifth equations of~\eqref{eq:MF1}, we get
 $$ \begin{array}{rcl}
                   u_{01}' = \gamma u_{10} - (1 + \phi_2) u_{01} + (1 + \gamma) u_{11} & \n = \n & 0, \vspace*{4pt} \\
      (1 + \gamma) u_{11}' = \gamma \phi_1 u_{10} + \phi_2 (1 + \gamma) u_{01} - (1 + \gamma)(2 + \gamma) u_{11} & \n = \n & 0, \end{array} $$
 from which we deduce the following:
\begin{equation}
\label{eq:MF3}
\begin{array}{l}
  \phi_1 u_{01}' - (1 + \gamma) u_{11}' = 0 \ \ \Longrightarrow \vspace*{4pt} \\ \hspace*{50pt}
 (\phi_1 + \phi_2 (1 + \phi_1 + \gamma)) u_{01} = (1 + \gamma)(2 + \phi_1 + \gamma) u_{11}, \vspace*{8pt} \\
 (2 + \gamma) u_{01}' + (1 + \gamma) u_{11}' = 0 \ \ \Longrightarrow \vspace*{4pt} \\ \hspace*{50pt}
 (2 + \phi_2 + \gamma) u_{01} = \gamma (2 + \phi_1 + \gamma) u_{10}. \end{array}
\end{equation}
 Letting~$\bar u_{01} = u_{01} / (2 \gamma (1 + \gamma)(2 + \phi_1 + \gamma))$, and using~\eqref{eq:MF3} then~\eqref{eq:MF2}, we obtain
\begin{equation}
\label{eq:MF4}
\begin{array}{rcl}
  u_{11} & \n = \n & \displaystyle \frac{\phi_1 + \phi_2 (1 + \phi_1 + \gamma)}{(1 + \gamma)(2 + \phi_1 + \gamma)} \,u_{01} =
                     \displaystyle  2 \gamma (\phi_1 + \phi_2 (1 + \phi_1 + \gamma)) \bar u_{01}, \vspace*{8pt} \\
  u_{10} & \n = \n & \displaystyle \frac{2 + \phi_2 + \gamma}{\gamma (2 + \phi_1 + \gamma)} \,u_{01} =
                     \displaystyle  2 (1 + \gamma)(2 + \phi_2 + \gamma) \bar u_{01}, \vspace*{8pt} \\
  u_{20} & \n = \n & \displaystyle \frac{\phi_1}{2 (1 + \gamma)} \,u_{01} =
                     \displaystyle  \phi_1 (2 + \phi_2 + \gamma) \bar u_{01}, \vspace*{8pt} \\
  u_{02} & \n = \n & \displaystyle \frac{\gamma}{2} \,u_{01} =
                     \displaystyle  \gamma^2 (\phi_1 + \phi_2 (1 + \phi_1 + \gamma)) \bar u_{01}. \end{array}
\end{equation}
 From the first equation in~\eqref{eq:MF1}, we get
 $$ \begin{array}{l}
     (\lambda_1 u_{10} + \lambda_2 u_{01} + 2 \lambda_1 u_{20} + (\lambda_1 + \lambda_2) u_{11} + 2 \lambda_2 u_{02}) \vspace*{4pt} \\ \hspace*{40pt}
      \times \,(1 - u_{10} - u_{01} - u_{20} - u_{11} - u_{02}) = u_{10} + u_{01}, \end{array} $$
 which, according to~\eqref{eq:MF4}, can be rewritten in the form
 $$ A u_{01} (1 - B u_{01}) = C u_{01} \quad \hbox{with} \quad A, B, C > 0. $$
 This implies that~$u_{01} > 0$ if and only if
 $$ u_{01} = \frac{1}{B} \bigg(1 - \frac{C}{A} \bigg) > 0 \quad \Longleftrightarrow \quad A > C. $$
 Using~\eqref{eq:MF4} to compute~$A u_{01}$ and simplifying, we get
 $$ \begin{array}{rcl}
      A u_{01} & \n = \n & \lambda_1 u_{10} + \lambda_2 u_{01} + 2 \lambda_1 u_{20} + (\lambda_1 + \lambda_2) u_{11} + 2 \lambda_2 u_{02} \vspace*{4pt} \\
               & \n = \n & 2 \lambda_1 (1 + \gamma)(2 + \phi_2 + \gamma) \bar u_{01} +
                           2 \gamma \lambda_2 (1 + \gamma)(2 + \phi_1 + \gamma) \bar u_{01} \vspace*{4pt} \\ && \hspace*{25pt} 
                       + \ 2 \lambda_1 \phi_1 (2 + \phi_2 + \gamma) \bar u_{01} +
                           2 \gamma (\lambda_1 + \lambda_2)(\phi_1 + \phi_2 (1 + \phi_1 + \gamma)) \bar u_{01} \vspace*{4pt} \\ && \hspace*{25pt}
                       + \ 2 \gamma^2 \lambda_2 (\phi_1 + \phi_2 (1 + \phi_1 + \gamma)) \bar u_{01} \vspace*{4pt} \\
              & \n = \n & 2 (1 + \gamma)(\lambda_1 + \gamma \lambda_2)((1 + \phi_1)(1 + \phi_2) + (1 + \phi_1) + \gamma (1 + \phi_2)) \bar u_{01}. \end{array} $$
 Similarly, using~\eqref{eq:MF4} to compute~$C u_{01}$, we get
 $$ \begin{array}{rcl}
      C u_{01} & \n = \n & u_{10} + u_{01} = 2 (1 + \gamma)(2 + \phi_2 + \gamma) \bar u_{01} + 2 \gamma (1 + \gamma)(2 + \phi_1 + \gamma) \bar u_{01} \vspace*{4pt} \\
               & \n = \n & 2 (1 + \gamma)(\gamma \phi_1 + \phi_2 + (1 + \gamma)(2 + \gamma)) \bar u_{01}. \end{array} $$
 In conclusion, at least when the clusters contain two individuals, there is an interior fixed point with a positive density of infected individuals~($A - C > 0$) if and only if
\begin{equation}
\label{eq:MF5}
\begin{array}{l}
 (\lambda_1 + \gamma \lambda_2)((1 + \phi_1)(1 + \phi_2) + (1 + \phi_1) + \gamma (1 + \phi_2)) \vspace*{4pt} \\ \hspace*{134pt}
                                                         - \,\gamma \phi_1 - \phi_2 - (1 + \gamma)(2 + \gamma) > 0. \end{array}
\end{equation}
 which proves the proposition.
\end{proofof} \\ \\
 Taking~$\gamma = 0$ to exclude symptomatic individuals, or~$\phi_1 = \phi_2 = 0$ to force the number of infected individuals per cluster to be at most one,~\eqref{eq:MF5} simplifies to
 $$ \lambda_1 (1 + \phi_1) > 1 \ \ \hbox{when} \ \ \gamma = 0 \quad \hbox{and} \quad \lambda_1 + \gamma \lambda_2 > 1 + \gamma \ \ \hbox{when} \ \ \phi_1 = \phi_2 = 0. $$
 The first inequality is the particular case for clusters of size two of the condition for survival of the individual recovery process derived in~\cite{belhadji_lanchier_2005}, while the second inequality is the condition for survival of the contact process with an asymptomatic state derived in~\cite{belhadji_lanchier_mercer_2024}. \vspace*{5pt} \\
{\bf External infection rates}.
 Because the left-hand side of~\eqref{eq:MF5} is increasing in~$\lambda_1, \lambda_2$, if there is survival, increasing~$\lambda_1$ and~$\lambda_2$ still results in survival.
 More generally, the form of the expression in~\eqref{eq:MF5} shows that the survival phase in the~$(\lambda_1, \lambda_2)$ plane consists of the region above the straight line connecting~$(\lambda_1^*, 0)$ and~$(0, \lambda_2^*)$, where
 $$ \begin{array}{rcl}
    \lambda_1^* & \n = \n & \displaystyle
    \frac{\gamma \phi_1 + \phi_2 + (1 + \gamma)(2 + \gamma)}{(1 + \phi_1)(1 + \phi_2) + (1 + \phi_1) + \gamma (1 + \phi_2)} \leq 1 + \gamma, \vspace*{8pt} \\
    \lambda_2^* & \n = \n & \displaystyle \frac{\lambda_1^*}{\gamma} =
    \frac{1}{\gamma} \frac{\gamma \phi_1 + \phi_2 + (1 + \gamma)(2 + \gamma)}{(1 + \phi_1)(1 + \phi_2) + (1 + \phi_1) + \gamma (1 + \phi_2)} \leq 1 + \frac{1}{\gamma}, \end{array} $$
 which is below and parallel to the straight line with equation~$\lambda_1 + \gamma \lambda_2 = 1 + \gamma$.
 The inequalities above follow from a direct calculation.
 This implies that
\begin{equation}
\label{eq:MF6}
\begin{array}{rcl} \lambda_1 + \gamma \lambda_2 > 1 + \gamma & \Longrightarrow & \hbox{survival} \ \ \forall \,\phi_1, \phi_2 \geq 0. \end{array}
\end{equation}
 Using also the continuity in the parameters, this shows that
\begin{equation}
\label{eq:MF7}
\begin{array}{rcl}
\lambda_1, \lambda_2 > 1 & \Longrightarrow & \hbox{survival} \ \ \forall \,\gamma \geq 0, \vspace*{4pt} \\
           \lambda_1 > 1 & \Longrightarrow & \hbox{survival} \ \ \forall \,\gamma > 0 \ \hbox{small}, \vspace*{4pt} \\
           \lambda_2 > 1 & \Longrightarrow & \hbox{survival} \ \ \forall \,\gamma < \infty \ \hbox{large}, \end{array}
\end{equation}
 which is the analog of Theorem~\ref{th:lambdalarge} with the critical value~1 instead of~$\lambda_c$.
 To better understand the phase structure of the mean-field model, we define
 $$ \Lambda_{11} = \lambda_1 (1 + \phi_1), \quad \Lambda_{12} = \lambda_1 (1 + \phi_2), \quad \Lambda_{21} = \lambda_2 (1 + \phi_1), \quad \Lambda_{22} = \lambda_2 (1 + \phi_2). $$
 Assuming that~$\Lambda_{11}, \Lambda_{22} > 1$, the AM-GM inequality implies that
 $$ \Lambda_{12} + \Lambda_{21} \geq 2 \sqrt{\Lambda_{12} \Lambda_{21}} = 2 \sqrt{\Lambda_{11} \Lambda_{22}} > 2, $$
 from which it follows that
 $$ \begin{array}{l}
      (\lambda_1 + \gamma \lambda_2)((1 + \phi_1)(1 + \phi_2) + (1 + \phi_1) + \gamma (1 + \phi_2)) \vspace*{4pt} \\ \hspace*{20pt} = \
       \lambda_1 (1 + \phi_1)(2 + \phi_2) + \gamma \lambda_2 (1 + \phi_2)(1 + \gamma + \phi_1) + \gamma (\lambda_1 (1 + \phi_2) + \lambda_2 (1 + \phi_1)) \vspace*{4pt} \\ \hspace*{20pt} = \
      (2 + \phi_2) \Lambda_{11} + \gamma (1 + \gamma + \phi_1) \Lambda_{22} + \gamma (\Lambda_{12} + \Lambda_{21}) \vspace*{4pt} \\ \hspace*{20pt} > \
      (2 + \phi_2) + \gamma (1 + \gamma + \phi_1) + 2 \gamma = \gamma \phi_1 + \phi_2 + (1 + \gamma)(2 + \gamma). \end{array} $$
 This shows~\eqref{eq:MF5}, and so survival.
 Note also that~\eqref{eq:MF5} reduces to~$\Lambda_{11} > 1$ when~$\gamma = 0$, and to~$\Lambda_{22} > 1$ when~$\gamma = \infty$, therefore, using again continuity, we get
\begin{equation}
\label{eq:MF8}
\begin{array}{rcl}
\lambda_1 > 1 / (1 + \phi_1) \ \hbox{and} \ \lambda_2 > 1 / (1 + \phi_2) & \Longrightarrow & \hbox{survival} \ \ \forall \,\gamma \geq 0, \vspace*{4pt} \\
                                            \lambda_1 > 1 / (1 + \phi_1) & \Longrightarrow & \hbox{survival} \ \ \forall \,\gamma > 0 \ \hbox{small}, \vspace*{4pt} \\
                                            \lambda_2 > 1 / (1 + \phi_2) & \Longrightarrow & \hbox{survival} \ \ \forall \,\gamma < \infty \ \hbox{large}, \end{array}
\end{equation}
 which improves~\eqref{eq:MF7}.
 See the top-left picture and the bottom-right picture of Figure~\ref{fig:MF1} for an illustration.
 In contrast, when all four~$\Lambda_{ij}$ are smaller than one,
 $$ \begin{array}{l}
    (\lambda_1 + \gamma \lambda_2)((1 + \phi_1)(1 + \phi_2) + (1 + \phi_1) + \gamma (1 + \phi_2)) \vspace*{4pt} \\ \hspace*{20pt} = \
     \Lambda_{11} (1 + \phi_2) + \Lambda_{11} + \gamma \Lambda_{12} + \gamma \Lambda_{22} (1 + \phi_1) + \gamma \lambda_{2, 1} + \gamma^2 \Lambda_{22} \vspace*{4pt} \\ \hspace*{20pt} < \
     (1 + \phi_2) + 1 + \gamma + \gamma (1 + \phi_1) + \gamma + \gamma^2 = \gamma \phi_1 + \phi_2 + (1 + \gamma)(2 + \gamma). \end{array} $$
 This shows that~\eqref{eq:MF5} is not satisfied, and so extinction.
\begin{figure}[t!]
\centering
\scalebox{0.45}{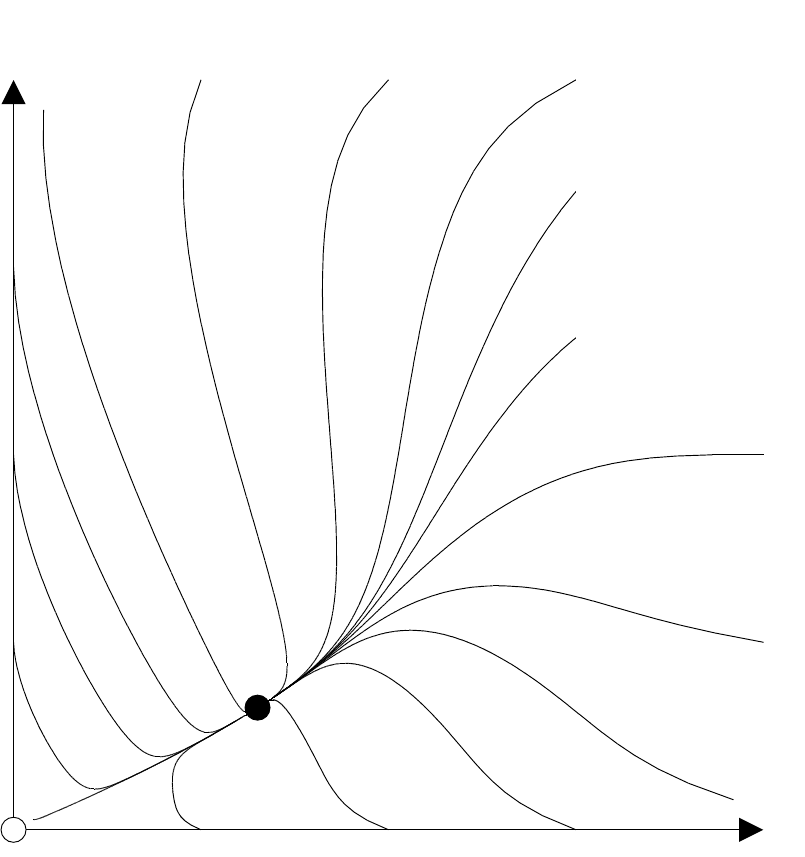} \hspace*{40pt} \scalebox{0.45}{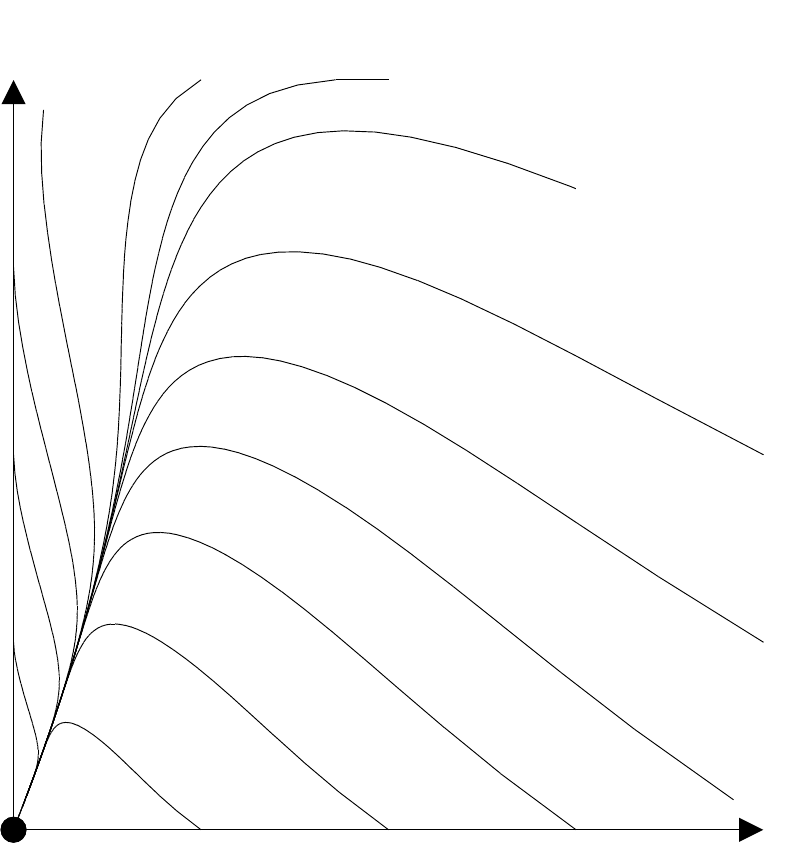} \vspace*{10pt} \\
\scalebox{0.45}{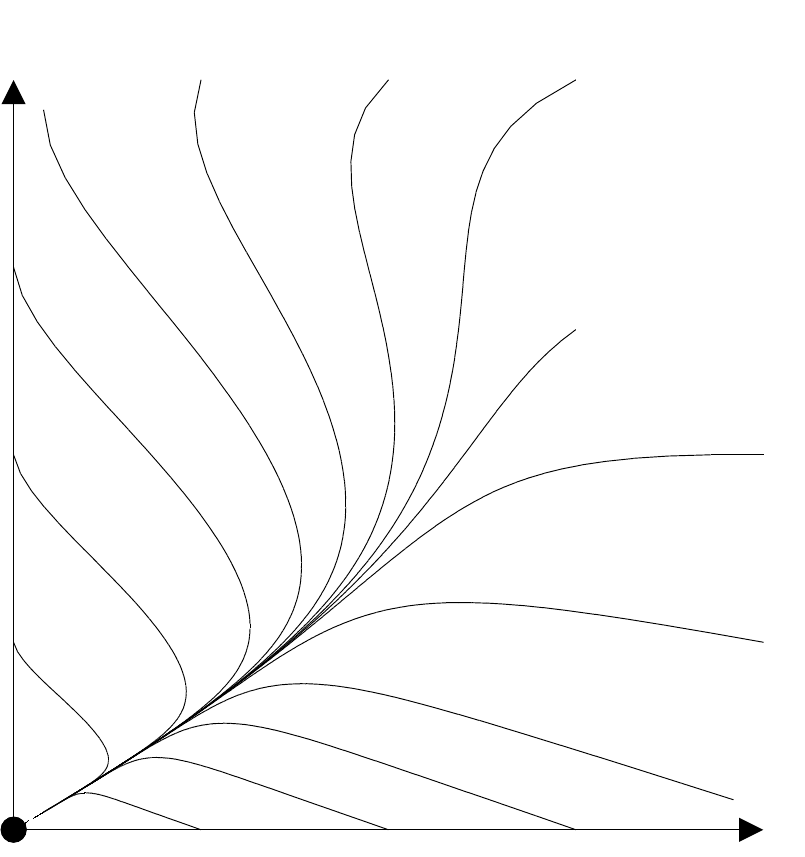} \hspace*{40pt} \scalebox{0.45}{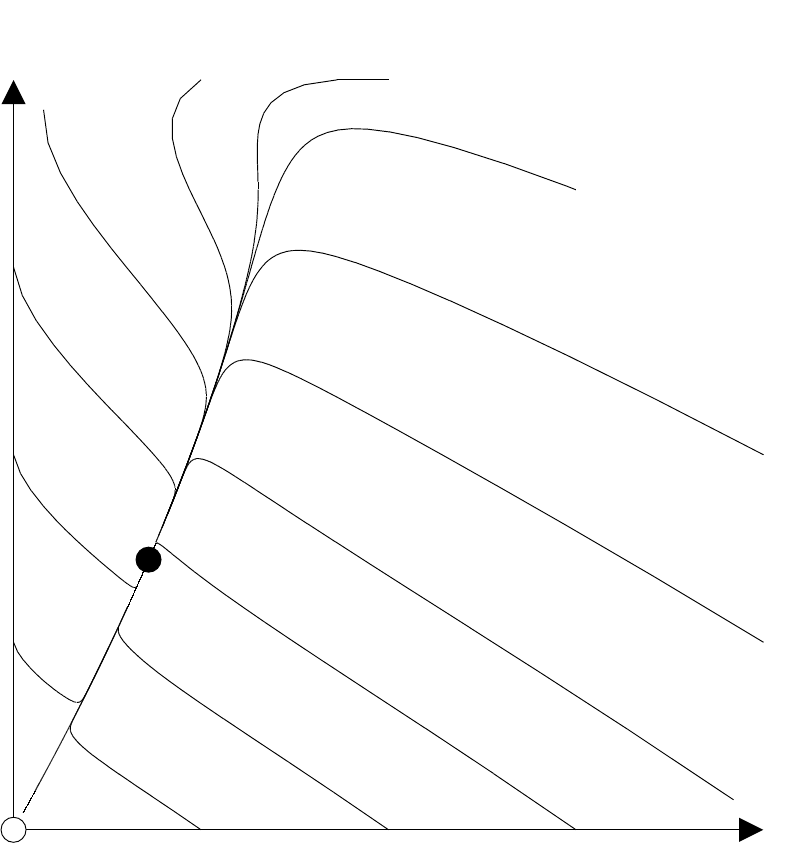}
\caption{\upshape{
 Projection in the~$(u_1, u_2)$ plane of the solution curves of~\eqref{eq:MF1}.
 The two pictures at the top are examples where~$\Lambda_{11} > 1$ and~$\Lambda_{22} < 1$, leading to survival when~$\gamma$ is small~(left) but not when~$\gamma$ is large~(right).
 The two pictures at the bottom show that reversing the internal and external infection rates of asymptomatic and symptomatic individuals, we now have survival when~$\gamma$ is large~(right) but not when~$\gamma$ is small~(left).}}
\label{fig:MF1}
\end{figure}
 Using again continuity and that~\eqref{eq:MF5} reduces to the inequality~$\Lambda_{11} > 1$ when~$\gamma = 0$, and~$\Lambda_{22} > 1$ when~$\gamma = \infty$, we conclude that
\begin{equation}
\label{eq:MF9}
\begin{array}{rcl}
\lambda_1, \lambda_2 < 1 / (1 + \phi_1 \vee \phi_2) & \Longrightarrow & \hbox{extinction} \ \ \forall \,\gamma \geq 0, \vspace*{4pt} \\
                       \lambda_1 < 1 / (1 + \phi_1) & \Longrightarrow & \hbox{extinction} \ \ \forall \,\gamma > 0 \ \hbox{small}, \vspace*{4pt} \\
                       \lambda_2 < 1 / (1 + \phi_2) & \Longrightarrow & \hbox{extinction} \ \ \forall \,\gamma < \infty \ \hbox{large}, \end{array}
\end{equation}
 which is the analog of Theorem~\ref{th:lambdasmall} with a specific~$\lambda_- = \lambda_- (\phi_1, \phi_2) > 0$.
 See the top-right picture and the bottom-left picture of Figure~\ref{fig:MF1} for an illustration. \vspace*{5pt} \\
{\bf Internal infection rates}.
 The system is also monotone with respect to the internal infection rates~$\phi_1, \phi_2$ in terms of survival versus extinction.
 However, the proof is more complicated because the left-hand side of~\eqref{eq:MF5}, which can be written as
 $$ \begin{array}{rcl}
      F (\phi_1, \phi_2) & \n = \n & (\lambda_1 + \gamma \lambda_2) \phi_1 \phi_2 + (2 (\lambda_1 + \gamma \lambda_2) - \gamma) \phi_1 \vspace*{4pt} \\ && \hspace*{18pt} + \,
                                     ((1 + \gamma)(\lambda_1 + \gamma \lambda_2) - 1) \phi_2 + (2 + \gamma)((\lambda_1 + \gamma \lambda_2) - (1 + \gamma)), \end{array} $$
 is not monotone in~$\phi_1, \phi_2$.
 Taking the partial derivatives, we get
 $$ \partial_{\phi_1} F = (\lambda_1 + \gamma \lambda_2)(2 + \phi_2) - \gamma \quad \hbox{and} \quad \partial_{\phi_2} F = (\lambda_1 + \gamma \lambda_2)(1 + \gamma + \phi_1) - 1, $$
 so the function~$F$ is separately convex in the first quadrant.
 In addition,
 $$ \begin{array}{l}
      \partial_{\phi_1} F (0, \phi_2) = (\lambda_1 + \gamma \lambda_2)(2 + \phi_2) - \gamma < 0 \vspace*{4pt} \\ \hspace*{20pt}
      \Longrightarrow \ \ (\lambda_1 + \gamma \lambda_2) \phi_2 < \gamma - 2 (\lambda_1 + \gamma \lambda_2) \vspace*{4pt} \\ \hspace*{20pt}
      \Longrightarrow \ \ F (0, \phi_2) \leq (1 + \gamma)(\lambda_1 + \gamma \lambda_2) \phi_2 + (2 + \gamma)((\lambda_1 + \gamma \lambda_2) - (1 + \gamma)) \vspace*{4pt} \\ \hspace*{90pt}
                                           < (1 + \gamma)(\gamma - 2 (\lambda_1 + \gamma \lambda_2)) + (2 + \gamma)((\lambda_1 + \gamma \lambda_2) - (1 + \gamma)) \vspace*{4pt} \\ \hspace*{90pt}
                                           = - \gamma (\lambda_1 + \gamma \lambda_2) - 2 (1 + \gamma) \leq 0. \end{array} $$
 Similarly, for the other variable,
 $$ \begin{array}{l}
      \partial_{\phi_2} F (\phi_1, 0) = (\lambda_1 + \gamma \lambda_2)(1 + \gamma + \phi_1) - 1 < 0 \vspace*{4pt} \\ \hspace*{20pt}
      \Longrightarrow \ \ (\lambda_1 + \gamma \lambda_2) \phi_1 < 1 - (1 + \gamma)(\lambda_1 + \gamma \lambda_2) \vspace*{4pt} \\ \hspace*{20pt}
      \Longrightarrow \ \ F (\phi_1, 0) \leq 2 (\lambda_1 + \gamma \lambda_2) \phi_1 + (2 + \gamma)((\lambda_1 + \gamma \lambda_2) - (1 + \gamma)) \vspace*{4pt} \\ \hspace*{90pt}
                                           < 2 (1 - (1 + \gamma)(\lambda_1 + \gamma \lambda_2)) + (2 + \gamma)((\lambda_1 + \gamma \lambda_2) - (1 + \gamma)) \vspace*{4pt} \\ \hspace*{90pt}
                                           = - \gamma (\lambda_1 + \gamma \lambda_2) - \gamma (3 + \gamma) \leq 0. \end{array} $$
 This shows that, for all~$\phi_1, \phi_2 \geq 0$,
 $$ F (0, \phi_2) \geq 0 \ \Longrightarrow \ \partial_{\phi_1} F (0, \phi_2) \geq 0 \quad \hbox{and} \quad
    F (\phi_1, 0) \geq 0 \ \Longrightarrow \ \partial_{\phi_2} F (\phi_1, 0) \geq 0. $$
 Using also convexity, we deduce that the sign of~$F (\phi_1, \phi_2)$, as opposed to the function itself, can only increase when~$\phi_1$ and~$\phi_2$ increase.
 This shows monotonicity with respect to the internal infection rates in the sense that, if there is survival, increasing~$\phi_1$ and~$\phi_2$ still results in survival, so the survival phase in the~$(\phi_1, \phi_2)$ plane consists of the region above a decreasing curve.
 In addition, taking~$\phi_2 = 0$ and solving for~$\phi_1$, and taking~$\phi_1 = 0$ and solving for~$\phi_2$, we get
 $$ \begin{array}{rclcl}
    \phi_1 & \n = \n & \displaystyle \frac{(2 + \gamma)((1 + \gamma) - (\lambda_1 + \gamma \lambda_2))}{2 (\lambda_1 + \gamma \lambda_2) - \gamma}
           & \hbox{when} & \phi_2 = 0, \vspace*{8pt} \\
    \phi_2 & \n = \n & \displaystyle \frac{(2 + \gamma)((1 + \gamma) - (\lambda_1 + \gamma \lambda_2))}{(1 + \gamma)(\lambda_1 + \gamma \lambda_2) - 1}
           & \hbox{when} & \phi_1 = 0. \end{array} $$
 The previous two expressions, together with~\eqref{eq:MF6}, imply that there is a unique phase transition from extinction to survival in the direction of
 $$ \begin{array}{rcl}
    \hbox{$\phi_1$ for all~$\phi_2 \geq 0$} & \Longleftrightarrow & \gamma / 2 < \lambda_1 + \gamma \lambda_2 < 1 + \gamma, \vspace*{4pt} \\
    \hbox{$\phi_2$ for all~$\phi_1 \geq 0$} & \Longleftrightarrow & 1 / (1 + \gamma) < \lambda_1 + \gamma \lambda_2 < 1 + \gamma, \end{array} $$
 but if~$\lambda_1 + \gamma \lambda_2$ is smaller, there is always extinction when~$\phi_1$ or~$\phi_2$ is small~(horizontal and vertical asymptotes in the first quadrant).
 Moreover, the implications in~\eqref{eq:MF8} can be written as
 $$ \begin{array}{rcl}
    \lambda_1, \lambda_2 > 0 \ \hbox{and} \ \phi_1 > 1 / \lambda_1 - 1 \ \hbox{and} \ \phi_2 > 1 / \lambda_2 - 1 & \Longrightarrow & \hbox{survival} \ \ \forall \,\gamma \geq 0, \vspace*{4pt} \\
                                                         \lambda_1 > 0 \ \hbox{and} \ \phi_1 > 1 / \lambda_1 - 1 & \Longrightarrow & \hbox{survival} \ \ \forall \,\gamma > 0 \ \hbox{small}, \vspace*{4pt} \\
                                                         \lambda_2 > 0 \ \hbox{and} \ \phi_2 > 1 / \lambda_2 - 1 & \Longrightarrow & \hbox{survival} \ \ \forall \,\gamma < \infty \ \hbox{large}, \end{array} $$
 which is the analog of Theorem~\ref{th:philarge} with a specific~$\phi_+ = \phi_+ (\lambda_1, \lambda_2) < \infty$.
 See the top-left picture and the bottom-right picture of Figure~\ref{fig:MF1}.
 Similarly,~\eqref{eq:MF9} can be written as
 $$ \begin{array}{rcl}
    \lambda_1, \lambda_2 < 1 \ \hbox{and} \ \phi_1, \phi_2 < 1 / (\lambda_1 \vee \lambda_2) - 1 & \Longrightarrow & \hbox{extinction} \ \ \forall \,\gamma \geq 0, \vspace*{4pt} \\
                                        \lambda_1 < 1 \ \hbox{and} \ \phi_1 < 1 / \lambda_1 - 1 & \Longrightarrow & \hbox{extinction} \ \ \forall \,\gamma > 0 \ \hbox{small}, \vspace*{4pt} \\
                                        \lambda_2 < 1 \ \hbox{and} \ \phi_2 < 1 / \lambda_2 - 1 & \Longrightarrow & \hbox{extinction} \ \ \forall \,\gamma < \infty \ \hbox{large}, \end{array} $$
 which is the analog of Theorem~\ref{th:phismall} with a specific~$\phi_- = \phi_- (\lambda_1, \lambda_2) > 0$.
 See the top-right picture and the bottom-left picture of Figure~\ref{fig:MF1} for an illustration. \vspace*{5pt} \\
{\bf Cluster size}.
 Finding the condition for survival when the cluster size is a general integer is much more complicated.
 However, the condition for survival in the mean-field model of the individual recovery process~\cite{belhadji_lanchier_2005}, which corresponds to the case~$\gamma = 0$ in our model, is given by
 $$ \lambda_1 (1 + \phi_1 + \phi_1^2 + \cdots + \phi_1^{N - 1}) > 1. $$
 Based on this result and the results above, we conjecture that we still have
 $$ \begin{array}{rcl}
                                \Lambda_{11}, \Lambda_{22} > 1 & \Longrightarrow & \hbox{survival} \ \ \forall \,\gamma \geq 0, \vspace*{4pt} \\
    \Lambda_{11}, \Lambda_{12}, \Lambda_{21}, \Lambda_{22} < 1 & \Longrightarrow & \hbox{extinction} \ \ \forall \,\gamma \geq 0, \end{array} $$
 and that whether we have survival or extinction depends only on~$\Lambda_{11}$ in the limit as~$\gamma \to 0$ and only on~$\Lambda_{22}$ in the limit as~$\gamma \to \infty$ but with
 $$ \Lambda_{ij} = \lambda_i (1 + \phi_j + \phi_j^2 + \cdots + \phi_j^{N - 1}) \quad \hbox{for} \ i, j = 1, 2, $$
 instead of the expression~$\Lambda_{ij} = \lambda_i (1 + \phi_j)$.
 If true, and because
 $$ \begin{array}{rcl}
    \lambda_i > 0 \ \hbox{and} \ \phi_j > 1 & \Longrightarrow & \Lambda_{ij} = \lambda_i (1 + \phi_j + \phi_j^2 + \cdots + \phi_j^{N - 1}) \to \infty \ \hbox{as} \ N \to \infty, \end{array} $$
 we deduce that
 $$ \begin{array}{rcl}
    \lambda_1, \lambda_2 > 0 \ \hbox{and} \ \phi_1, \phi_2 > 1 & \Longrightarrow & \hbox{survival} \ \ \forall \,N < \infty \ \hbox{large and} \ \gamma \geq 0, \vspace*{4pt} \\
                       \lambda_1 > 0 \ \hbox{and} \ \phi_1 > 1 & \Longrightarrow & \hbox{survival} \ \ \forall \,N < \infty \ \hbox{large and} \ \gamma > 0 \ \hbox{small}, \vspace*{4pt} \\
                       \lambda_2 > 0 \ \hbox{and} \ \phi_2 > 1 & \Longrightarrow & \hbox{survival} \ \ \forall \,N < \infty \ \hbox{large and} \ \gamma < \infty \ \hbox{large}, \end{array} $$
 which is the analog of Theorem~\ref{th:Nlarge}.
 In contrast, when~$N = 1$, the process reduces to the contact process with an asymptomatic state, therefore we have extinction when~$\lambda_1 + \gamma \lambda_2 < 1 + \gamma$ according to the mean-field analysis in~\cite{belhadji_lanchier_mercer_2024}.
 It follows that
 $$ \begin{array}{rcl}
    \lambda_1, \lambda_2 < 1 \ \hbox{and} \ N = 1 & \Longrightarrow & \hbox{extinction} \ \ \forall \,\gamma \geq 0, \vspace*{4pt} \\
               \lambda_1 < 1 \ \hbox{and} \ N = 1 & \Longrightarrow & \hbox{extinction} \ \ \forall \,\gamma > 0 \ \hbox{small}, \vspace*{4pt} \\
               \lambda_2 < 1 \ \hbox{and} \ N = 1 & \Longrightarrow & \hbox{extinction} \ \ \forall \,\gamma < \infty \ \hbox{large}, \end{array} $$
 which is the analog of Theorem~\ref{th:Nsmall} for the mean-field model. \vspace*{5pt} \\
{\bf Mutation rate}.
 In contrast with the infection rates, whether we have survival or extinction is not monotone with respect to the mutation rate.
\begin{figure}[t!]
\centering
\scalebox{0.60}{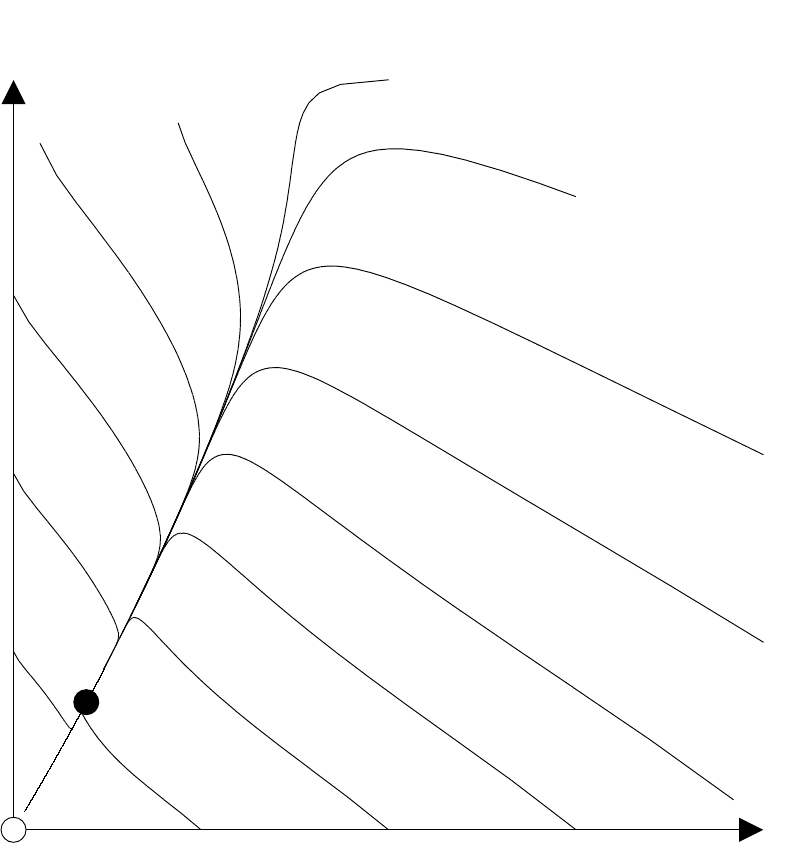} \hspace*{20pt} \scalebox{0.34}{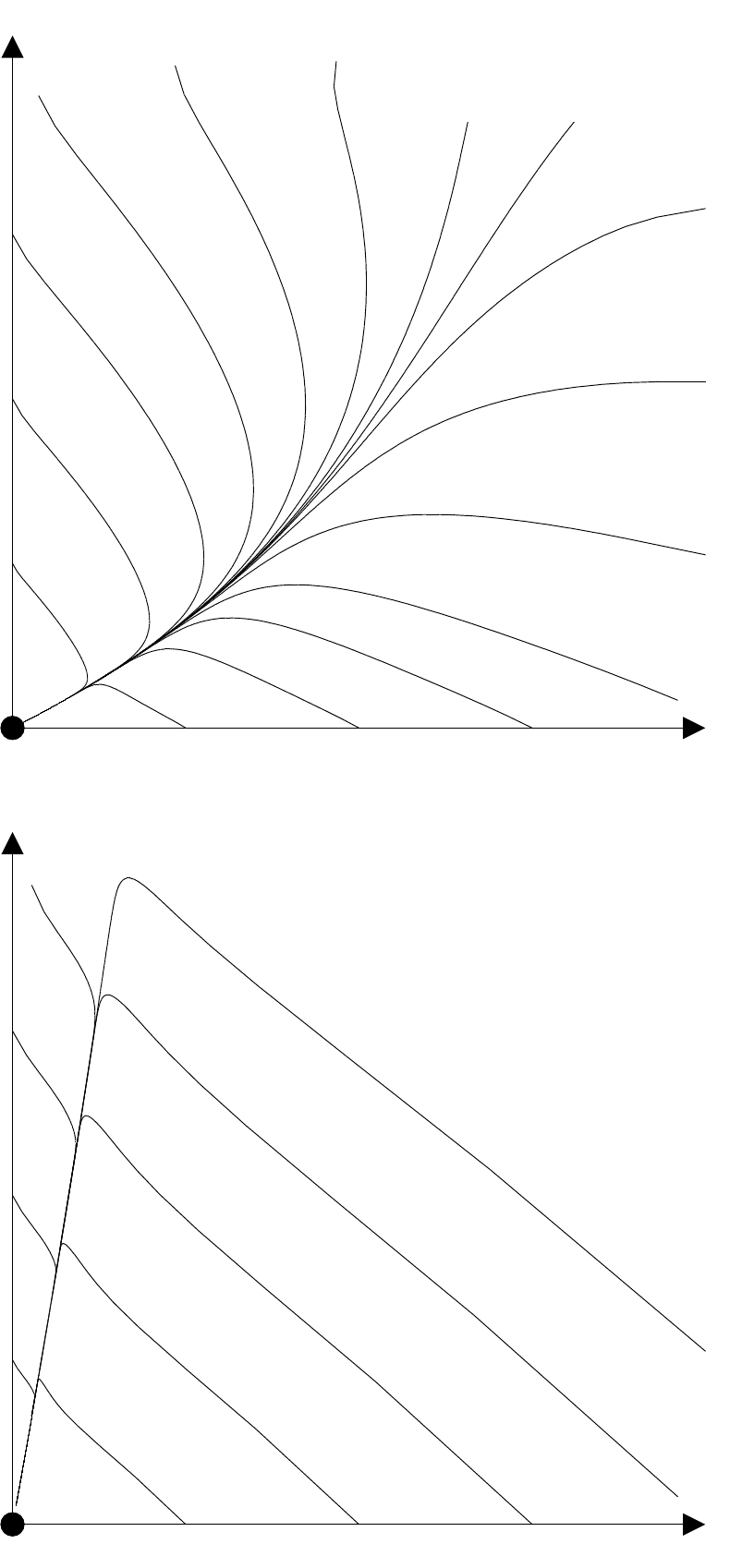}
\caption{\upshape{
 Projection in the~$(u_1, u_2)$ plane of the solution curves of~\eqref{eq:MF1}.
 The infection rates are the same in all three pictures but the value of the mutation rate~$\gamma$ varies.
 The pictures show that there is extinction for~$\gamma$ small and large~(right), but survival for intermediate values~(left), and so the lack of monotonicity with respect to~$\gamma$.}}
\label{fig:MF2}
\end{figure}
 According to~\eqref{eq:MF8} and~\eqref{eq:MF9},
 $$ \begin{array}{rcl}
    \Lambda_{11} < 1 < \Lambda_{22} & \Longrightarrow & \hbox{extinction} \ \,\forall \,\gamma > 0 \ \hbox{small but survival} \ \,\forall \,\gamma < \infty \ \hbox{large}, \vspace*{4pt} \\
    \Lambda_{11} > 1 > \Lambda_{22} & \Longrightarrow & \hbox{extinction} \ \,\forall \,\gamma < \infty \ \hbox{large but survival} \ \,\forall \,\gamma > 0 \ \hbox{small}, \end{array} $$
 showing that increasing~$\gamma$ promotes survival in some situations, but promotes extinction in other situations~(see Figure~\ref{fig:MF1}).
 Because~\eqref{eq:MF5} is a polynomial with degree two in~$\gamma$, there can be at most two phase transitions in the direction of~$\gamma$.
 In fact, the presence of two phase transitions is possible:
 taking~$\lambda_2 = \phi_1 = 0$,~$\lambda_1 = 3/4$ and~$\phi_2 = 11$, the left-hand side of~\eqref{eq:MF5} reduces to
 $$ \begin{array}{rcl}
      F (\gamma) & \n = \n & \lambda_1 ((2 + \phi_2) + \gamma (1 + \phi_2)) - \phi_2 - (1 + \gamma)(2 + \gamma) = - \frac{1}{4} (4 \gamma^2 - 24 \gamma + 13) \vspace*{4pt} \\
                 & \n = \n & - (\gamma - \gamma_-)(\gamma - \gamma_+) \quad \hbox{where} \quad \gamma_{\pm} = 3 \pm \sqrt{23}/2 > 0. \end{array} $$
 This shows extinction when~$\gamma < \gamma_-$, then survival when~$\gamma_- < \gamma < \gamma_+$, and again extinction when~$\gamma > \gamma_+$, and so a lack of monotonicity with respect to~$\gamma$~(see Figure~\ref{fig:MF2} for an illustration of the three cases).
 Note also that, in the previous numerical example,
 $$ \Lambda_{11} = \lambda_1 (1 + \phi_1) = \lambda_1 = 3/4 \quad \hbox{and} \quad \Lambda_{22} = \lambda_2 (1 + \phi_2) = 0, $$
 which shows that the conditions~$\Lambda_{11}, \Lambda_{22} < 1$ are not sufficient for extinction~(see the left picture in Figure~\ref{fig:MF2}), though there is always survival when the inequalities are flipped.


\section{Graphical representation and coupling}
\label{sec:GR}
 We now study the spatial stochastic model, starting from its construction.
 The process defined in the introduction keeps track of the number of asymptomatic and symptomatic individuals in each cluster, so the state at time~$t$ is a spatial configuration
  $$ \xi_t : \Z^d \longrightarrow \{(i, j) \in \Z_+ \times \Z_+ : i + j \leq N \}, $$
 where~$\xi_t (x) = (i, j)$ means that the social cluster~$x \in \Z^d$ contains~$i$ asymptomatic individuals and~$j$ symptomatic individuals.
 However, to study the process, it will be easier to think of each social cluster as a pile of~$N$ sites corresponding to the~$N$ individuals living in the cluster, and to assume that the process now keeps track of the state of each individual, rather than the state of each cluster.
 More precisely, we identify each cluster~$x \in \Z^d$ with
 $$ \{(x, 1), (x, 2), \ldots, (x, N) \} = \{(x, i) : i = 1, 2, \ldots, N \}, $$
 and think of site~$(x, i)$ as the location of the individual at level~$i$ of cluster~$x$, which can be of type~0 for healthy,~1 for asymptomatic, and~2 for symptomatic.
 Then, the state of the process at time~$t$, that we again denote by~$\xi_t$, becomes a spatial configuration
 $$ \xi_t : \Delta \longrightarrow \{0, 1, 2 \} \quad \hbox{where} \quad \Delta = \Z^d \times \{1, 2, \ldots, N \}, $$
 and where~$\xi_t (x, i)$ now gives the type of the individual at level~$i$ of cluster~$x$.
 Writing~$x \sim y$ to indicate that clusters~$x$ and~$y$ are nearest neighbors, i.e.,
 $$ \norm{x - y}_1 = |x_1 - y_1| + |x_2 - y_2| + \cdots + |x_d - y_d| = 1, $$
 and using an idea of Harris~\cite{harris_1972}, our epidemic model starting from any initial configuration can be constructed using the following collections of independent Poisson processes:
\begin{itemize}
\item {\bf External infections}.
 For~$j = 1, 2$, and all
 $$ (x, i) \in \Delta = \Z^d \times \{1, 2, \ldots, N \} \quad \hbox{and} \quad y \sim x, $$
 place an exponential clock with rate~$\lambda_j$ along the edge~$(x, i) \to (y, 1)$, and draw
 $$ \hbox{a type~$j$ arrow} \ (x, i, t) \arrow{$j$} (y, 1, t) \ \hbox{at the times~$t$ the clock rings} $$
 to indicate that if the tail of the arrow is occupied by a~$j$ and all the levels of~$y$ are occupied by a~0, then the head of the arrow becomes type~1. \vspace*{4pt}
\item {\bf Internal infections}.
 For~$j = 1, 2$, and all
 $$ (x, i) \in \Delta = \Z^d \times \{1, 2, \ldots, N \}, $$
 place an exponential clock with rate~$\phi_j$ at site~$(x, i)$, and put
 $$ \hbox{a type~$j$ bullet \bul{$j$} at} \ (x, i, t) \ \hbox{at the times~$t$ the clock rings} $$
 to indicate that if the bullet is occupied by a~$j$ and there is at least one~0 in cluster~$x$, then one of the~0s, say the lowest~0 to fix the ideas, becomes type~1. \vspace*{4pt}
\item {\bf Mutations}.
 Place an exponential clock with rate~$\gamma$ at each~$(x, i) \in \Delta$, and put
 $$ \hbox{a circle $\circ$ at} \ (x, i, t) \ \hbox{at the times~$t$ the clock rings} $$
 to indicate that if the circle is occupied by a~1, then it becomes type~2. \vspace*{4pt}
\item {\bf Recoveries}.
 Place an exponential clock with rate one at each~$(x, i) \in \Delta$, and put
 $$ \hbox{a cross $\times$ at} \ (x, i, t) \ \hbox{at the times~$t$ the clock rings} $$
 to indicate that if the cross is occupied by a~1 or a~2, then it becomes type~0.
\end{itemize}
 Because the transition rates at each site~$(x, i) \in \Delta$ depend on the number of~1s and~2s in the clusters but not on the specific location of these individuals in their cluster, the dynamics is invariant by permutation of the levels in each cluster.
 In particular, after each update, we may reorder the individuals in their cluster in such a way that
 $$ \hbox{the 2s are at the bottom}, \quad \hbox{the 1s are in the middle}, \quad \hbox{the 0s are at the top}. $$
 This vertical convention implies that, if an external infection arrow points at a healthy site~(necessarily at level~1 by construction), then the whole cluster must be healthy, which facilitates the couplings below.
 Using variations of the graphical representation above, we can now couple processes with different parameters and/or initial configurations as well as different processes to prove monotonicity/attractiveness and compare our process with other reference processes. \\
\indent
 To begin with, we let~$\xi_t, \xi_t'$ be the two individual recovery processes with different infection and mutation rates but the same cluster size:
 $$ \xi_t = \irpa (\lambda_1, \lambda_2, \phi_1, \phi_2, \gamma, N) \quad \hbox{and} \quad \xi_t' = \irpa (\lambda_1', \lambda_2', \phi_1', \phi_2', \gamma', N). $$
 Having two configurations~$\xi_t, \xi_t'$, we define the order
 $$ \xi_t \leq \xi_t' \quad \Longleftrightarrow \quad \xi_t (x, i) \leq \xi_t' (x, i) \ \hbox{for all} \ (x, i) \in \Delta, $$
 and assume that~$\xi_0 \leq \xi_0'$, which is equivalent to
 $$ (\xi_0, \xi_0') : \Delta \longrightarrow S \ \ \hbox{where} \ \ S = \{00, 01, 02, 11, 12, 22 \}. $$
 The next lemma shows monotonicity with respect to each of the five rates and attractiveness when the symptomatic individuals are more infectious than the asymptomatic individuals from the point of view of both external and internal infections.
\begin{lemma}
\label{lem:irpa}
 We have~$\xi_t \leq \xi_t'$ at all times~$t$ whenever
 $$ \xi_0 \leq \xi_0', \quad \lambda_1 \leq \lambda_1' \leq \lambda_2 \leq \lambda_2', \quad \phi_1 \leq \phi_1' \leq \phi_2 \leq \phi_2', \quad \gamma \leq \gamma'. $$
 In particular, if the process~$\xi_t$ survives then the process~$\xi_t'$ survives.
\end{lemma}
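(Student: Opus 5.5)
We will construct both processes on a common graphical representation using thinning: each Poisson process with a larger rate is obtained from the one with the smaller rate by adding independent extra marks. The check then reduces to showing that every possible event preserves the pairwise order. Concretely:
\begin{itemize}
\item For external infections from $(x,i)$ to $(y,1)$, use three nested Poisson processes with rates $\lambda_1\le\lambda_1'\le\lambda_2\le\lambda_2'$, with arrows labeled by the processes in which they are active. A tail in state $1$ in $\xi$ uses the $\lambda_1$-arrows, a tail in state $1$ in $\xi'$ uses the $\lambda_1'$-arrows, and similarly for state $2$.
\item Internal infections at $(x,i)$ are treated the same way, with nested rates $\phi_1\le\phi_1'\le\phi_2\le\phi_2'$.
\item Mutation circles of rate $\gamma'$ are thinned to rate $\gamma$.
\item Crosses are shared by both processes.
\end{itemize}
The key observation is that, whenever $\xi(x,i)\le\xi'(x,i)$ with the pair in $S$, the infection rate used by $\xi$ at that site is at most the rate used by $\xi'$. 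The pairs $01,02,12$ give $0\le\cdot$ or $\lambda_1\le\lambda_2'$ (using $\lambda_1'\le\lambda_2$ for the pair $12$), and the pairs $11,22$ give $\lambda_1\le\lambda_1'$ and $\lambda_2\le\lambda_2'$. So every arrow or bullet effective in $\xi$ is also effective in $\xi'$, as far as the tail is concerned. The chain of inequalities $\lambda_1\le\lambda_1'\le\lambda_2\le\lambda_2'$ is exactly what makes this single nested construction possible.

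We then go through each event and check that the order $\xi\le\xi'$, i.e.\ $(\xi,\xi')\in S$ sitewise, is preserved.
\begin{itemize}
\item \emph{Crosses} send both coordinates to $0$.
\item \emph{Mutation circles} active in both processes turn a $1$ into a $2$ in each process. This maps $11\to22$, $12\to22$, $01\to02$ and $02\to02$, all of which lie in $S$. A circle active only in $\xi'$ can only increase $\xi'$.
\item \emph{An external arrow} into $(y,1)$ makes $\xi(y,1)$ become $1$ only when cluster $y$ is entirely healthy in $\xi$ and the tail is effective in $\xi$. Then the tail is effective in $\xi'$ as well. Either cluster $y$ is healthy in $\xi'$, so $\xi'(y,1)$ also becomes $1$, or it is not, in which case, by the vertical convention, $\xi'(y,1)\ge1$ already.
\item \emph{A bullet} at $(x,i)$ effective in $\xi$ turns the lowest $0$ of cluster $x$ into a $1$.
\end{itemize}

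The main obstacle is the interaction of the bullets with the vertical reordering convention. We will argue at the level of cluster counts rather than fixed sites. Write $n_2\le n_2'$ and $n_1+n_2\le n_1'+n_2'$ for the counts of $2$s and of infected individuals in a cluster. With 2s at the bottom, then 1s, then 0s, sitewise order of the sorted piles is equivalent to these two inequalities. Note that $n_2\le n_2'$ holds and the $1$s sit above the $2$s.

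Sitewise order is therefore a statement about the pair of counts, and since all rates depend only on counts, it suffices to check that each transition preserves these two inequalities. For a bullet effective in both processes, $n_1+n_2$ and $n_1'+n_2'$ each increase by one unless $\xi'$ is full, in which case the inequality $n_1+n_2+1\le N=n_1'+n_2'$ still holds. Recoveries and mutations, when applied after reordering, also keep both inequalities; a short case check handles the situations where a cross hits different types in the two piles. We choose crosses and circles to act on ranked positions, so that they hit corresponding levels of the two sorted piles.

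Survival of $\xi_t'$ then follows because $P(\xi_t'(x)\ne0)\ge P(\xi_t(x)\ne0)$.
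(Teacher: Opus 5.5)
Your proof is correct and follows essentially the same route as the paper: the same nested superposition coupling of arrows and bullets (rates $\lambda_1\le\lambda_1'\le\lambda_2\le\lambda_2'$ and $\phi_1\le\phi_1'\le\phi_2\le\phi_2'$), shared crosses and thinned circles, then a check that the set $S$ of ordered pairs is closed under every transition. Your reformulation of the order on sorted piles as $n_2\le n_2'$ and $n_1+n_2\le n_1'+n_2'$ is just a tidier way of handling the reordering convention and the lowest-$0$ rule, which the paper does with its tables and the comparison of the lowest-$0$ levels $i\le i'$.
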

\begin{proof}
 The idea is to use a variation of the graphical representation introduced above and the superposition property for Poisson processes to couple the two processes in such a way that the domination holds at all times.
 The processes can be constructed jointly using
\begin{itemize}
 \item $\arrow{1}$ at rate~$\lambda_1$ with the same effect as~$\arrow{1}$ and~$\arrow{2}$ above on both processes, \vspace*{4pt}
 \item $\arrow{1'}$ at rate~$\lambda_1' - \lambda_1$ with the same effect as~$\arrow{1}$ above only on the second process, and the same effect as~$\arrow{2}$ above on both processes, \vspace*{4pt}
 \item $\arrow{2}$ at rate~$\lambda_2 - \lambda_1'$ with the same effect as~$\arrow{2}$ above on both processes, \vspace*{4pt}
 \item $\arrow{2'}$ at rate~$\lambda_2' - \lambda_2$ with the same effect as~$\arrow{2}$ above on the second process, \vspace*{4pt}
 \item \bul{1} at rate~$\phi_1$ with the same effect as~\bul{1} and~\bul{2} above on both processes, \vspace*{4pt}
 \item \bul{1'} at rate~$\phi_1' - \phi_1$ with the same effect as~\bul{1} above only on the second process, and the same effect as~\bul{2} above on both processes, \vspace*{4pt}
 \item \bul{2} at rate~$\phi_2 - \phi_1'$ with the same effect as~\bul{2} above on both processes, \vspace*{4pt}
 \item \bul{2'} at rate~$\phi_2' - \phi_2$ with the same effect as~\bul{2} above on the second process, \vspace*{4pt}
 \item $\circ$ at rate~$\gamma$ with the same effect as~$\circ$ above on both processes, \vspace*{4pt}
 \item $\circ'$ at rate~$\gamma' - \gamma$ with the same effect as~$\circ$ above on the second process, \vspace*{4pt}
 \item $\times$ at rate one with the same effect as~$\times$ above on both processes.
\end{itemize}
 The next step is to prove that the set~$S$ is closed under the dynamics of the coupling, which is modeled by the eleven collections of Poisson processes above.
 To deal with the external infections, because only healthy clusters can be infected from nearby clusters, to avoid trivialities, we may assume that the arrow points at a site~$(y, 1)$ at which at least one of the two processes is healthy, which gives only three possible states that are also in the set~$S$, namely,~00, 01, 02.
 Recalling that the whole cluster is healthy if and only if the first level is healthy by our vertical convention, we obtain Table~\ref{tab:lambda}, which shows all the possible transitions and confirms that the external infections do not create states that are not already in the set~$S$.
\begin{table}[h!]
\begin{tabular}{c|p{2em}p{0.4em}p{0.4em}p{1em}|p{2em}p{0.4em}p{0.4em}p{1em}|p{2em}p{0.4em}p{0.4em}p{1em}|p{2em}p{0.4em}p{0.4em}p{1em}}
        & $\arrow{1}$ & 00 & 01 & 02 & $\arrow{1'}$ & 00 & 01 & 02 & $\arrow{2}$ & 00 & 01 & 02 & $\arrow{2'}$ & 00 & 01 & 02 \\ \hline
    00 && 00 & 01 & 02 && 00 & 01 & 02 && 00 & 01 & 02 && 00 & 01 & 02 \\
    01 && 01 & 01 & 02 && 01 & 01 & 02 && 00 & 01 & 02 && 00 & 01 & 02 \\
    02 && 01 & 01 & 02 && 01 & 01 & 02 && 01 & 01 & 02 && 01 & 01 & 02 \\
    11 && 11 & 11 & 12 && 01 & 01 & 02 && 00 & 01 & 02 && 00 & 01 & 02 \\
    12 && 11 & 11 & 12 && 01 & 01 & 02 && 01 & 01 & 02 && 01 & 01 & 02 \\
    22 && 11 & 11 & 12 && 11 & 11 & 12 && 11 & 11 & 12 && 01 & 01 & 02
\end{tabular} \vspace*{10pt}
\caption{\upshape{
 Effects of the external infections on the coupling.
 The states in the first column, respectively, first row, represent the possible states at site~$(x, i)$, respectively, at site~$(y, 1)$, before a potential update, while the states in the table are the states resulting from the interactions.}}
\label{tab:lambda}
\end{table}
 To deal with the internal infections, assume that the lowest~0 for~$\xi_t$ is at level~$i$ and the lowest~0 for~$\xi_t'$ is at level~$i' > i$.
 Then, the state at site~$(x, i')$ before a potential internal infection must be~00, which either remains unchanged, or becomes~01, another state in~$S$.
 In particular, to check for stability, it is enough to look at the new state at site~$(x, i)$ after a potential internal infection.
 Table~\ref{tab:phi} shows all the possible transitions and confirms that the internal infections do not create states that are not already in the set~$S$.
\begin{table}[h!]
\begin{tabular}{c|p{2em}p{0.4em}p{0.4em}p{1em}|p{2em}p{0.4em}p{0.4em}p{1em}|p{2em}p{0.4em}p{0.4em}p{1em}|p{2em}p{0.4em}p{0.4em}p{1em}}
        & \bul{1} & 00 & 01 & 02 & \bul{1'} & 00 & 01 & 02 & \bul{2} & 00 & 01 & 02 & \bul{2'} & 00 & 01 & 02 \\ \hline
    00 && 00 & 01 & 02 && 00 & 01 & 02 && 00 & 01 & 02 && 00 & 01 & 02 \\
    01 && 01 & 01 & 02 && 01 & 01 & 02 && 00 & 01 & 02 && 00 & 01 & 02 \\
    02 && 01 & 01 & 02 && 01 & 01 & 02 && 01 & 01 & 02 && 01 & 01 & 02 \\
    11 && 11 & 11 & 12 && 01 & 01 & 02 && 00 & 01 & 02 && 00 & 01 & 02 \\
    12 && 11 & 11 & 12 && 01 & 01 & 02 && 01 & 01 & 02 && 01 & 01 & 02 \\
    22 && 11 & 11 & 12 && 11 & 11 & 12 && 11 & 11 & 12 && 01 & 01 & 02
\end{tabular} \vspace*{10pt}
\caption{\upshape{
 Effects of the internal infections on the coupling.
 The states in the first column, respectively, first row, represent the possible states at site~$(x, i)$, respectively, at site~$(x, i)$ where~$i$ is the level of the lowest~0 in the process~$\xi_t$, before a potential update, while the states in the table are the states resulting from the interactions.}}
\label{tab:phi}
\end{table}
 In the table, we use that, if~$(x, i)$ is in state~00 and all the states are in~$S$ before the infection, then~$i = i'$.
 Note that the states in Tables~\ref{tab:lambda} and~\ref{tab:phi} are exactly the same.
 Checking that the mutations and recoveries do not create states that are not already in the set~$S$ is straightforward.
 Finally, taking~$\xi_0 = \xi_0'$ gives monotonicity with respect to each of the five rates, while assuming that both processes have the same parameters gives attractiveness. of the process.
\end{proof} \\ \\
 We point out that the previous lemma does not hold if the asymptomatic individuals are more infectious than the symptomatic individuals from the point of view of either the external or the internal infections.
 Indeed, when~$\lambda_2 < \lambda_2' < \lambda_1 < \lambda_1'$, the two processes can be constructed jointly from the same collections of Poisson processes as before but exchanging the roles of the~1s and the~2s when defining the four types of arrows modeling the external infections.
\begin{table}[h!]
\begin{tabular}{c|p{7.5em}p{1em}||c|p{7.5em}p{1em}||c|p{7.5em}p{1em}}
        & $\arrow{2'}$ (rate~$\lambda_2' - \lambda_2$) & 00 &    & $\arrow{1}$ (rate~$\lambda_1 - \lambda_2'$) & 02 &    & $\arrow{1}$ (rate~$\lambda_1 - \lambda_2'$) & 00 \\ \hline
     22 &                                              & 02 & 11 &                                             & 12 & 12 &                                             & 10
\end{tabular} \vspace*{10pt}
\caption{\upshape{
 Counterexample showing that the natural coupling fails to prove monotonicity and attractiveness when the asymptomatic individuals are more infectious than the symptomatic individuals.}}
\label{tab:counterexample}
\end{table}
 For this natural coupling, even if the infection rates for the second process are larger than their counterparts for the first process, and even if both processes start from the same initial configuration, the dynamics can create the states~02, then~12, then~10, as shown in Table~\ref{tab:counterexample}, therefore the coupling fails to establish that the process is attractive or monotone.
 Assuming instead that~$\phi_2 < \phi_2' < \phi_1 < \phi_1'$ leads to a similar counterexample.
 This shows that the assumptions in the lemma are necessary. \\
\indent
 We now use additional coupling techniques to show that our process is sandwiched between two individual recovery processes.
 To prove this result, we let
 $$ \begin{array}{rcl}
    \xi_t^- & \n = \n & \irpa (\lambda_1 \wedge \lambda_2, \lambda_1 \wedge \lambda_2, \phi_1 \wedge \phi_2, \phi_1 \wedge \phi_2, \gamma, N), \vspace*{4pt} \\
    \xi_t^+ & \n = \n & \irpa (\lambda_1 \vee \lambda_2, \lambda_1 \vee \lambda_2, \phi_1 \vee \phi_2, \phi_1 \vee \phi_2, \gamma, N). \end{array} $$
 According to Lemma~\ref{lem:irpa}, we have the implication
 $$ \xi_0^- \leq \xi_0 \leq \xi_0^+, \ \ \lambda_1 \leq \lambda_2, \ \ \phi_1 \leq \phi_2 \quad \Longrightarrow \quad \xi_t^- \leq \xi_t \leq \xi_t^+ \ \hbox{at all times} \ t. $$
 This result is in fact true regardless of the ordering of the rates.
 The next lemma shows for instance the result when the asymptomatic individuals are more infectious than the symptomatic individuals from the point of view of both external and internal infections.
\begin{lemma}
\label{lem:irp}
 We have~$\xi_t^- \leq \xi_t \leq \xi_t^+$ at all times~$t$ whenever
 $$ \xi_0^- \leq \xi_0 \leq \xi_0^+, \quad \lambda_1 > \lambda_2, \quad \phi_1 > \phi_2. $$
\end{lemma}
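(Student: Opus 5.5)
The plan is to mimic the proof of Lemma~\ref{lem:irpa}: construct the three processes $\xi_t^-$, $\xi_t$, $\xi_t^+$ jointly from a single graphical representation, and show that the set of triples
$$ S_3 = \{(a, b, c) \in \{0, 1, 2\}^3 : a \leq b \leq c \} $$
is closed under the dynamics. Since $\lambda_1 > \lambda_2$ and $\phi_1 > \phi_2$, the process $\xi_t^-$ has rates $(\lambda_2, \lambda_2, \phi_2, \phi_2)$ and $\xi_t^+$ has rates $(\lambda_1, \lambda_1, \phi_1, \phi_1)$. So in both extreme processes the type of an infected tail is irrelevant for infections, and only whether it is infected matters. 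By the superposition property I will use:
\begin{itemize}
\item arrows $\arrow{a}$ at rate $\lambda_2$, acting on all three processes whenever the tail is infected (type 1 or 2);
\item arrows $\arrow{b}$ at rate $\lambda_1 - \lambda_2$, acting on $\xi_t^+$ whenever the tail is infected, on $\xi_t$ only when the tail is of type 1, and never on $\xi_t^-$;
\item bullets \bul{a} at rate $\phi_2$ and \bul{b} at rate $\phi_1 - \phi_2$, with the same two rules for internal infections;
\item common circles $\circ$ at rate $\gamma$ and common crosses $\times$ at rate one.
\end{itemize}
Each marginal then has the correct law: in $\xi_t$, a 1 infects at total rates $\lambda_1$ and $\phi_1$, while a 2 infects at rates $\lambda_2$ and $\phi_2$.

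Closure under mutations and recoveries is immediate. A common cross sends every infected coordinate to 0, which preserves $a \leq b \leq c$. A common circle turns every coordinate equal to 1 into a 2; the only delicate case is $b = 1$, $c = 2$, which becomes $(\cdot, 2, 2)$ and stays ordered.

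For infections, the key observation is that order on the tail propagates the ability to infect upward. If the tail is infected in $\xi_t^-$ it is infected in $\xi_t$, and if it is infected in $\xi_t$ it is infected in $\xi_t^+$. So whenever $\xi_t^-$ uses an $\arrow{a}$ or \bul{a}, so does $\xi_t$; and whenever $\xi_t$ uses any arrow or bullet, so does $\xi_t^+$.

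On the head side, for external infections I will use the vertical convention. The target $(y, 1)$ is healthy exactly when the whole cluster is, and order at level 1 gives the following: if $\xi_t$ is infected at $(y, 1)$ then its level-1 value is at least 1, so a new 1 in $\xi_t^-$ there is dominated, and similarly between $\xi_t$ and $\xi_t^+$. For internal infections, the order implies that the number of infected individuals is ordered, so the lowest-0 levels satisfy $i^- \leq i \leq i^+$. Each newly created 1 is then either at a level where the dominating process is already infected, or at the same level, where the dominating process is simultaneously infected by the remark above. I will organize this as tables analogous to Tables~\ref{tab:lambda} and~\ref{tab:phi}, with rows the tail triples and columns the head triples in $\{000, 001, 002, 011, 012, 022\}$.

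The step I expect to be the main obstacle is the compatibility of the pointwise order with the reordering (vertical convention) after each update. An update may produce, at some level, a 1 in $\xi_t$ facing a 2 in $\xi_t^+$, after which both clusters are resorted. I will handle this with the elementary fact that sorting is monotone. If two vectors in $\{0, 1, 2\}^N$ are coordinatewise ordered, then their decreasing rearrangements (2s at the bottom, then 1s, then 0s) are also coordinatewise ordered, since for each threshold $k$ the number of entries $\geq k$ is ordered.

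Since the marginal laws do not depend on where individuals sit within a cluster, resorting is harmless. Closure of $S_3$ then gives $\xi_t^- \leq \xi_t \leq \xi_t^+$ at all times.
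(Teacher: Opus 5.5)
Your proof is correct and follows the paper's route. You use the same superposition coupling: extra marks at rates $\lambda_1-\lambda_2$ and $\phi_1-\phi_2$ act on $\xi_t$ only through 1s, act on $\xi_t^+$ through any infected individual, and do not act on $\xi_t^-$ at all. This is exactly what the paper's Table~\ref{tab:irp} encodes, although the paper's verbal description of the $\arrow{1}$ marks is garbled. Your closure check of the ordered states then matches the paper's. Your explicit observation that re-sorting each cluster is monotone is a useful addition, since the paper invokes the vertical convention without arguing that it is compatible with the coupled order.
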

\begin{proof}
 We construct the three processes jointly by using
\begin{itemize}
 \item $\arrow{2}$ at rate~$\lambda_2$ with the same effect as~$\arrow{1}$ and~$\arrow{2}$ above on all three processes, \vspace*{4pt}
 \item $\arrow{1}$ at rate~$\lambda_1 - \lambda_2$ with the same effect as~$\arrow{1}$ above only on the processes~$\xi_t^-$ and~$\xi_t$, and the same effect as~$\arrow{2}$ above only on the process~$\xi_t^+$, \vspace*{4pt}
 \item \bul{2} at rate~$\phi_2$ with the same effect as~\bul{1} and~\bul{2} above on all three processes, \vspace*{4pt}
 \item \bul{1} at rate~$\phi_1 - \phi_2$ with the same effect as~\bul{1} above only on the processes~$\xi_t$ and~$\xi_t^+$, and the same effect as~\bul{2} above on the process~$\xi_t^+$, \vspace*{4pt}
 \item $\circ$ at rate~$\gamma$ with the same effect as~$\circ$ above on all three processes, \vspace*{4pt}
 \item $\times$ at rate one with the same effect as~$\times$ above on all three processes.
\end{itemize}
 Table~\ref{tab:irp} is the analog of Table~\ref{tab:lambda} but for the couplings~$(\xi_t^-, \xi_t)$ and~$(\xi_t, \xi_t^+)$.
\begin{table}[h!]
\begin{tabular}{c|p{2em}p{0.4em}p{0.4em}p{1em}|p{2em}p{0.4em}p{0.4em}p{1em}||p{2em}p{0.4em}p{0.4em}p{1em}|p{2em}p{0.4em}p{0.4em}p{1em}}
        & $\arrow{2}$ & 00 & 01 & 02 & $\arrow{1}$ & 00 & 01 & 02 & $\arrow{2}$ & 00 & 01 & 02 & $\arrow{1}$ & 00 & 01 & 02 \\ \hline
    00 && 00 & 01 & 02 && 00 & 01 & 02 && 00 & 01 & 02 && 00 & 01 & 02 \\
    01 && 01 & 01 & 02 && 01 & 01 & 02 && 01 & 01 & 02 && 01 & 01 & 02 \\
    02 && 01 & 01 & 02 && 00 & 01 & 02 && 01 & 01 & 02 && 01 & 01 & 02 \\
    11 && 11 & 11 & 12 && 01 & 01 & 02 && 11 & 11 & 12 && 11 & 11 & 12 \\
    12 && 11 & 11 & 12 && 00 & 01 & 02 && 11 & 11 & 12 && 11 & 11 & 12 \\
    22 && 11 & 11 & 12 && 00 & 01 & 02 && 11 & 01 & 12 && 01 & 01 & 02
\end{tabular} \vspace*{10pt}
\caption{\upshape{
 Effects of the external infections on~$(\xi_t^-, \xi_t)$ on the left and on~$(\xi_t, \xi_t^+)$ on the right.
 The states in the first column, respectively, first row, represent the possible states at site~$(x, i)$, respectively, at site~$(y, 1)$, before a potential update, while the states in the table are the states resulting from the interactions.}}
\label{tab:irp}
\end{table}
 It shows all the possible transitions and confirms that the external infections do not create states that are not already in the set~$S$.
 As previously, the effects of the internal infections are exactly the same as the external infections, and it is straightforward to check that the mutations and recoveries do not create states that are not already in the set~$S$.
 This completes the proof.
\end{proof} \\ \\
 To compare our process with individual recovery processes, note that, after identifying asymptomatic and symptomatic individuals, we get
 $$ \begin{array}{rcl}
    \irpa (\lambda_1 \wedge \lambda_2, \lambda_1 \wedge \lambda_2, \phi_1 \wedge \phi_2, \phi_1 \wedge \phi_2, \gamma, N) & \n = \n &
     \irp (\lambda_1 \wedge \lambda_2, \phi_1 \wedge \phi_2, N), \vspace*{4pt} \\
    \irpa (\lambda_1 \vee \lambda_2, \lambda_1 \vee \lambda_2, \phi_1 \vee \phi_2, \phi_1 \vee \phi_2, \gamma, N) & \n = \n &
     \irp (\lambda_1 \vee \lambda_2, \phi_1 \vee \phi_2, N). \end{array} $$
 This, together with Lemma~\ref{lem:irp}, implies that
\begin{equation}
\label{eq:irp-irpa}
\irp (\lambda_1 \wedge \lambda_2, \phi_1 \wedge \phi_2, N) \leq
\irpa (\lambda_1, \lambda_2, \phi_1, \phi_2, \gamma, N) \leq
\irp (\lambda_1 \vee \lambda_2, \phi_1 \vee \phi_2, N)
\end{equation}
 at all times provided this is true at time~0, where~$\leq$ now means that the set of infected sites for the process on the left is included in the set of infected sites for the process on the right, i.e., we ignore the distinction between asymptomatic and symptomatic individuals. \\
\indent
 To conclude this section, we now prove that the process dominates the contact process with an asymptomatic state.
 To prove this result, we let
 $$ \xi_t = \irpa (\lambda_1, \lambda_2, \phi_1, \phi_2, \gamma, N) \quad \hbox{and} \quad \xi_t^0 = \irpa (\lambda_1, \lambda_2, \phi_1, \phi_2, \gamma, 1). $$
 Both processes have the same rates but the clusters reduce to a single individual in the second process.
 Coupling these two processes requires a special treatment because they are defined on different spatial structures.
 Defining the new order
 $$ \xi_t^0 \leq \xi_t \quad \Longleftrightarrow \quad \xi_t^0 (x, 1) \leq \xi_t (x, 1) \ \hbox{for all} \ x \in \Z^d, $$
 we have the following lemma.
\begin{lemma}
\label{lem:cpa}
 We have~$\xi_t^0 \leq \xi_t$ at all times~$t$ whenever~$\xi_0^0 \leq \xi_0$ and~$\lambda_1 \leq \lambda_2$.
\end{lemma}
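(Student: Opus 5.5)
I plan to construct the two processes on the same graphical representation, the one built for $\xi_t$ on $\Delta = \Z^d \times \{1, \ldots, N\}$, and let $\xi_t^0$ use only the Poisson processes attached to level~1 of each cluster. Concretely, the $N = 1$ process is driven by the type~$j$ arrows $(x, 1, t) \arrow{$j$} (y, 1, t)$, the circles at $(x, 1)$ and the crosses at $(x, 1)$. The bullets are ignored by $\xi_t^0$, since with $N = 1$ a cluster never contains a~0 together with an infected individual. Because these clocks have exactly the rates $\lambda_1, \lambda_2, \gamma, 1$, the process $\xi_t^0$ obtained in this way is indeed $\irpa (\lambda_1, \lambda_2, \phi_1, \phi_2, \gamma, 1)$. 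As in Lemma~\ref{lem:irpa}, the claim then reduces to showing that the pair $(\xi_t^0 (x, 1), \xi_t (x, 1))$ stays in $S = \{00, 01, 02, 11, 12, 22\}$ for all $x$.

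One subtlety must be handled first. The vertical convention reorders the levels of $\xi_t$ after each update, so level~1 of $\xi_t$ holds a~2 if the cluster has any symptomatic individual, otherwise a~1 if it has an asymptomatic one, and otherwise a~0. I will keep this convention and argue directly about the maximal type present in cluster $x$, which is exactly $\xi_t (x, 1)$. Under this convention the individual at level~1 is not a fixed individual, so the Poisson marks at level~1 should be read as marks applied to whatever individual currently sits there. By the permutation invariance noted in the paper, this does not change the law of $\xi_t$. I will go through the events one at a time.

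\emph{Recovery} $\times$ at $(x, 1)$: $\xi^0 (x, 1)$ becomes~0, while $\xi (x, 1)$ can only drop to the type of the next individual. The result $0 \leq$ anything lies in $S$. Crosses at other levels leave $\xi^0$ unchanged and cannot decrease the maximal type of cluster $x$ below its level-1 value unless level~1 itself is hit. More precisely, removing a non-top individual does not change the top after reordering.

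\emph{Mutation} $\circ$ at $(x, 1)$: from state~11 both coordinates become 2, and from 01 or 02 the coordinate of $\xi$ becomes or stays~2, so 22 or 02 results. Mutations at higher levels only increase the maximum type of $\xi$.

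\emph{Internal infections} only increase $\xi$ and do not affect $\xi^0$. Here the fact that a newly infected individual is type~1 matters: it can only raise a level-1 value of~0 to~1, which is impossible since a cluster with a~0 at level~1 is empty.

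\emph{External arrows} $(x, 1, t) \arrow{$j$} (y, 1, t)$ are the only delicate case, and I expect them to be the main obstacle. Arrows emanating from levels $i > 1$ affect only $\xi$ and only increase it, so they are harmless. For an arrow from level~1, infection in $\xi^0$ requires $\xi^0 (x, 1) = j$ and $\xi^0 (y, 1) = 0$, and then $\xi^0 (y, 1)$ becomes~1. By the ordering $\xi (x, 1) \geq j$. If $\xi (y, 1) \geq 1$, we are done. If $\xi (y, 1) = 0$, cluster $y$ is healthy in $\xi$, and we need $\xi$ to infect it as well. If $j = 1$ and $\xi (x, 1) = 2$, the type~1 arrow has no effect on $\xi$. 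This is exactly where $\lambda_1 \leq \lambda_2$ enters. I will follow the superposition trick of Lemma~\ref{lem:irpa}: type~1 arrows at rate $\lambda_1$ act as both a 1-arrow and a 2-arrow on both processes, and the extra arrows at rate $\lambda_2 - \lambda_1$ act as 2-arrows only. With this modification, a 2 at the tail in $\xi$ infects whenever a 1 or a 2 does in $\xi^0$. The result is state~11 at $(y, 1)$. Checking the remaining tail/head combinations reproduces a table of the type of Table~\ref{tab:lambda} restricted to levels~1, which completes the proof.
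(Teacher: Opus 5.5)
Your proposal is correct and follows essentially the same route as the paper: the same superposition coupling of the external arrows (rate-$\lambda_1$ arrows acting as both types, extra $2$-arrows at rate $\lambda_2 - \lambda_1$), with $\xi_t^0$ driven by the level-$1$ marks and the order checked by showing the pair states stay in $S$, as in Table~\ref{tab:lambda}. Your explicit identification of $\xi_t(x,1)$ with the maximal type in cluster $x$ under the vertical convention is a useful sharpening of the paper's briefer claim that the marks away from level~$1$ cannot break the domination.
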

\begin{proof}
 The idea is again to find a coupling that preserves the order.
 We construct~$\xi_t$ from the graphical representation introduced at the beginning of this section modified so that the external infections are coupled.
 More precisely, because~$\lambda_1 \leq \lambda_2$, we may assume that
\begin{itemize}
 \item $\arrow{1}$ now have the same effect as~$\arrow{1}$ and~$\arrow{2}$ above, \vspace*{4pt}
 \item $\arrow{2}$ now appear at rate~$\lambda_2 - \lambda_1$.
\end{itemize}
 To construct the process~$\xi_t^0$, notice that, because the internal infections only occur in clusters that have at least one infected and one healthy individual and because~$N = 1$, there cannot be any internal infections in~$\xi_t^0$ so we ignore the~\bul{1} and~\bul{2}.
 In addition, the arrows starting from and circles and crosses located at level~$i > 1$ are also unimportant because the clusters only have one level.
 In particular, the process~$\xi_t^0$ can be constructed using only
\begin{itemize}
 \item $\arrow{1}$ and~$\arrow{2}$ starting from level~1, and~$\circ$ and~$\times$ located at level~1,
\end{itemize}
 from the graphical representation of~$\xi_t$.
 Because the arrows starting from level~$i > 1$, the bullets at any location, and the circles and crosses located at level~$i > 1$ have no effect on the process~$\xi_t$, while they either turn the~0s into~1s and the~1s into~2s in the process~$\xi_t'$ or have no effect on the state at level~1, their action cannot break the domination to be proved.
 In addition, the effect of the arrows starting from level~1 of the coupling is the same as the effect of~$\arrow{1}$ and~$\arrow{2}$ in Table~\ref{tab:lambda} that preserve the order.
 As previously, checking that the circles and crosses at level~1 do not create states that are not already in the set~$S$ is straightforward.
\end{proof} \\ \\
 To compare our process with the contact process with an asymptomatic state, note that, regardless of the value of the internal infection rates,
\begin{equation}
\label{eq:cpa}
\irpa (\lambda_1, \lambda_2, \phi_1, \phi_2, \gamma, 1) = \cpa (\lambda_1, \lambda_2, \gamma).
\end{equation}
 This, together with Lemma~\ref{lem:cpa}, implies that, when~$\lambda_1 \leq \lambda_2$,
\begin{equation}
\label{eq:cpa-irpa}
\cpa (\lambda_1, \lambda_2, \gamma) \leq \irpa (\lambda_1, \lambda_2, \phi_1, \phi_2, \gamma, N)
\end{equation}
 at all times provided this is true at time~0.


\section{Comparisons with the IRP and the CPA}
\label{sec:irp-cpa}
 In this section, we use the results from~\cite{belhadji_lanchier_2005,belhadji_lanchier_mercer_2024} about the individual recovery process and the contact process with an asymptomatic state in combination with~\eqref{eq:irp-irpa}--\eqref{eq:cpa-irpa} to prove some of our main results.
 Throughout the section, the process~$\xi_t$ refers to our epidemic model.
 To begin with, we compare the process with the \irp \ to prove part a of all the theorems. \\ \\
\begin{proofof}{Theorems~\ref{th:lambdalarge}.a and~\ref{th:lambdasmall}.a}
 By Theorem~1 and Lemma~1.1 in~\cite{belhadji_lanchier_2005},
 $$ \begin{array}{llllll}
    \forall \,\phi \geq 0, \,N \geq 1, & \exists \,\lambda_+ = \lambda_+ (\phi, N) \leq \lambda_c & \n : \n &
    \irp (\lambda, \phi, N) & \hbox{survives} & \forall \,\lambda > \lambda_+, \vspace*{4pt} \\
    \forall \,\phi \geq 0, \,N \geq 1, & \exists \,\lambda_- = \lambda_- (\phi, N) > 0 & \n : \n &
    \irp (\lambda, \phi, N) & \hbox{dies out} & \forall \,\lambda < \lambda_-. \end{array} $$
 Using also~\eqref{eq:irp-irpa}, we deduce that
 $$ \begin{array}{rcll}
    \lambda_1, \lambda_2 > \lambda_+ & \Longrightarrow &
    \xi_t \geq \irp (\lambda_1 \wedge \lambda_2, \phi_1 \wedge \phi_2, N) & \hbox{both survive}, \vspace*{4pt} \\
    \lambda_1, \lambda_2 < \lambda_- & \Longrightarrow &
    \xi_t \leq \irp (\lambda_1 \vee \lambda_2, \phi_1 \vee \phi_2, N)     & \hbox{both die out}, \end{array} $$
 for all mutation rate~$\gamma$, which proves the theorems.
\end{proofof} \\ \\
\begin{proofof}{Theorems~\ref{th:philarge}.a and~\ref{th:phismall}.a}
 By Theorem~3 and Lemma~1.1 in~\cite{belhadji_lanchier_2005},
 $$ \begin{array}{llllll}
    \forall \,\lambda > 0, \,N > 1, & \exists \,\phi_+ = \phi_+ (\lambda, N) < \infty & \n : \n &
    \irp (\lambda, \phi, N) & \hbox{survives} & \forall \,\phi > \phi_+, \vspace*{4pt} \\
    \forall \,\lambda < \lambda_c, \,N \geq 1, & \exists \,\phi_- = \phi_- (\lambda, N) > 0 & \n : \n &
    \irp (\lambda, \phi, N) & \hbox{dies out} & \forall \,\phi < \phi_-. \end{array} $$
 Using also~\eqref{eq:irp-irpa}, we deduce that
 $$ \begin{array}{rcll}
    \lambda_1, \lambda_2 > 0, \,\phi_1, \phi_2 > \phi_+, \,N > 1 & \Longrightarrow &
    \xi_t \geq \irp (\lambda_1 \wedge \lambda_2, \phi_1 \wedge \phi_2, N) & \hbox{both survive}, \vspace*{4pt} \\
    \lambda_1, \lambda_2 < \lambda_c, \,\phi_1, \phi_2 < \phi_-, \,N \geq 1 & \Longrightarrow &
    \xi_t \leq \irp (\lambda_1 \vee \lambda_2, \phi_1 \vee \phi_2, N) & \hbox{both die out}, \end{array} $$
 for all mutation rate~$\gamma$, which proves the theorems.
\end{proofof} \\ \\
\begin{proofof}{Theorems~\ref{th:Nlarge}.a and~\ref{th:Nsmall}.a}
 By Theorem~5 and Lemma~1.1 in~\cite{belhadji_lanchier_2005},
 $$ \begin{array}{llllll}
    \forall \,\lambda > 0, \,\phi > 1, & \exists \,N_+ = N_+ (\lambda, \phi) < \infty & \n : \n &
    \irp (\lambda, \phi, N) & \hbox{survives} & \forall \,N > N_+, \vspace*{4pt} \\
    \forall \,\lambda < \lambda_c, \,\phi \geq 0, & \exists \,N_- = N_- (\lambda, \phi) > 1 & \n : \n &
    \irp (\lambda, \phi, N) & \hbox{dies out} & \forall \,N < N_-. \end{array} $$
 Using also~\eqref{eq:irp-irpa}, we deduce that
 $$ \begin{array}{rcll}
    \lambda_1, \lambda_2 > 0, \,\phi_1, \phi_2 > 1, \,N > N_+ & \Longrightarrow &
    \xi_t \geq \irp (\lambda_1 \wedge \lambda_2, \phi_1 \wedge \phi_2, N) & \hbox{both survive}, \vspace*{4pt} \\
    \lambda_1, \lambda_2 < \lambda_c, \,\phi_1, \phi_2 \geq 0, \,N < N_- & \Longrightarrow &
    \xi_t \leq \irp (\lambda_1 \vee \lambda_2, \phi_1 \vee \phi_2, N)     & \hbox{both die out}, \end{array} $$
 for all mutation rate~$\gamma$, which proves the theorems.
\end{proofof} \\ \\
 Comparing the process with the \cpa \ gives a few more results. \\ \\
\begin{proofof}{Theorems~\ref{th:Nsmall}.b and~\ref{th:Nsmall}.c}
 By~(5.a), (6.b) and~(9.b) in~\cite{belhadji_lanchier_mercer_2024},
 $$ \begin{array}{llllll}
    \forall \,\lambda_1 < \lambda_c, \,\lambda_2 \geq 0, & \exists \,\gamma_- = \gamma_- (\lambda_1, \lambda_2) > 0 & \n : \n &
    \cpa (\lambda_1, \lambda_2, \gamma) & \hbox{dies out} & \forall \,\gamma < \gamma_-, \vspace*{4pt} \\
    \forall \,\lambda_1 \geq 0, \,\lambda_2 < \lambda_c, & \exists \,\gamma_+ = \gamma_+ (\lambda_1, \lambda_2) < \infty & \n : \n &
    \cpa (\lambda_1, \lambda_2, \gamma) & \hbox{dies out} & \forall \,\gamma < \gamma_+. \end{array} $$
 Using also~\eqref{eq:cpa}, we deduce that
 $$ \begin{array}{rcll}
    \lambda_1 < \lambda_c, \,\lambda_2 \geq 0, \,\gamma < \gamma_-, \,N = 1 & \Longrightarrow & \xi_t = \cpa (\lambda_1, \lambda_2, \gamma) & \hbox{both die out}, \vspace*{4pt} \\
    \lambda_1 \geq 0, \,\lambda_2 < \lambda_c, \,\gamma > \gamma_+, \,N = 1 & \Longrightarrow & \xi_t = \cpa (\lambda_1, \lambda_2, \gamma) & \hbox{both die out}, \end{array} $$
 for all internal infection rates~$\phi_1, \phi_2$, which proves the theorems.
\end{proofof} \\ \\
\begin{proofof}{Theorem~\ref{th:lambdalarge}.c}
 By~(5.b) and~(6.a) in~\cite{belhadji_lanchier_mercer_2024},
 $$ \begin{array}{llllll}
    \forall \,\lambda_1 \geq 0, \,\lambda_2 > \lambda_c, & \exists \,\gamma_+ = \gamma_+ (\lambda_1, \lambda_2) < \infty & \n : \n &
    \cpa (\lambda_1, \lambda_2, \gamma) & \hbox{survives} & \forall \,\gamma > \gamma_+. \end{array} $$
 Using also~\eqref{eq:cpa-irpa}, we deduce that, when~$\lambda_1 \leq \lambda_2$,
 $$ \begin{array}{rcll}
    \lambda_2 > \lambda_c, \,\gamma > \gamma_+ & \Longrightarrow & \xi_t \geq \cpa (\lambda_1, \lambda_2, \gamma) & \hbox{both survive}. \end{array} $$
 This proves the result when~$\lambda_1 \leq \lambda_2$, while the case~$\lambda_1 > \lambda_2$ follows from Theorem~\ref{th:lambdalarge}.a.
\end{proofof}


\section{Percolation results}
\label{sec:perco}
 Most of our results show either survival or extinction when the parameter~$\gamma$ and possibly another parameter of the system are small but positive or large but finite.
 Though the technical details strongly depend on the value of the parameters, the common background strategy to prove these results is to first study the process in the limiting case where these parameters are equal to zero or infinity and then apply a perturbation argument.
 In order to apply a perturbation argument, the first step is to control the speed of convergence by coupling the process properly rescaled in space and time with oriented site percolation, a technique referred to as block construction, and the goal of this section is to collect a few percolation results we will use repeatedly later.
 More precisely, we consider two percolation processes, one to prove survival using the percolation of its open sites, and one to prove extinction using the lack of percolation of its closed sites. \\ \\
\noindent{\bf Percolation model 1 (survival).}
 To define the first percolation model, let
 $$ \lat_+ = \{(m, n) \in \Z \times \Z_+ : m + n \ \hbox{is even} \}, $$
 which we turn into a directed graph $\vec{\lat}_+$ by placing arrows
 $$ (m, n) \to (m', n') \ \iff \ |m - m'| = 1 \ \hbox{and} \ n' = n + 1, $$
 so each site has two outgoing arrows, one going up-left and one going up-right.
 Oriented site percolation with parameter~$p$ and range~$M$ on this directed graph is obtained by assuming that sites are open with probability~$p$, closed with probability~$1 - p$, and that the states of sites that are more than graph distance~$M$ apart are independent, meaning that having a finite collection of sites~$\pi_i = (m_i, n_i) \in \lat_+$ for~$i = 1, 2, \ldots, K$,
\begin{equation}
\label{eq:dependence}
  P (\pi_1, \pi_2, \ldots, \pi_K \ \hbox{are open}) = p^K \quad \hbox{whenever} \quad |\pi_i - \pi_j| > M \ \hbox{for all} \ i \neq j,
\end{equation}
 where~$|\cdot|$ refers to the graph distance.
 Writing~$\pi \overset{o}{\longrightarrow} \pi'$ to indicate the existence of a directed path of open sites from site~$\pi$ to site~$\pi'$, we define the open cluster starting at~$\pi$ as
 $$ O_{\pi} = \{\pi' \in \lat_+ : \pi \overset{o}{\longrightarrow} \pi' \}. $$
 In the next sections, open sites will correspond, via a coupling, to space-time blocks in which the infection successfully propagates in the interacting particle system, therefore sites that can be reached from level~$n = 0$ by an open path will correspond to infected space-time blocks.
 In particular, the key to proving survival is the following lemma, which states that the open sites percolate with positive probability if the density~$p$ of open sites is close enough to one.
\begin{lemma}
\label{lem:perco-1}
 Assume that~$p > 1 - 3^{- 4 (2M + 1)^2}$. Then,
 $$ P (\card (O_{\pi_0}) = \infty) > 0 \quad \hbox{for all} \quad \pi_0 \in \lat_+. $$
\end{lemma}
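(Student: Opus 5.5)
We will use the standard contour (Peierls) argument from Durrett's approach to oriented percolation, adapted to finite-range dependence. By translation invariance of the dependence structure, it suffices to take $\pi_0 = (0,0)$. We then reduce to a bound on the probability that the open cluster $O_{\pi_0}$ is finite but nonempty or blocked.

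First, we fix the geometry. If $\card (O_{\pi_0}) < \infty$, consider the set $W$ of points of $\R^2$ lying within distance one (in a suitable sense) of the open cluster on the rotated lattice. Its boundary is a closed contour $\Gamma$ around $\pi_0$, made of unit diagonal steps in the four directions. The sites adjacent to $\Gamma$ on the outside, in the directions an open path could have continued (up-left or up-right), must be closed. Otherwise they would belong to $O_{\pi_0}$.

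The standard counting from Durrett's 1984 survey gives the following. A contour of length $2n$ has at most $3^{2n}$ shapes, because each step has at most three choices. It also has at least $n/2$ non-backtracking "upward" edges, each of which forces a distinct adjacent site to be closed. If $\pi_0$ itself is closed, we count it as a contour of length $2$ with one closed site.

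Next, we handle the dependence. Among the at least $n/2$ forced closed sites, we extract a subfamily that is pairwise at graph distance greater than $M$. The graph-distance ball of radius $M$ around a site of $\lat_+$ contains at most $(2M+1)^2$ sites. A greedy selection therefore yields at least $n/(2(2M+1)^2)$ such sites. By~\eqref{eq:dependence}, applied to the complementary events, all of them are closed with probability $(1-p)^{n/(2(2M+1)^2)}$.

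Applying~\eqref{eq:dependence} to closed sites requires inclusion–exclusion over open events of separated sites. Since all subsets of a separated family are separated, the joint law of a separated family is exactly i.i.d. Bernoulli$(p)$. This is how we justify it.

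Finally, we sum over contours:
$$ P (\card (O_{\pi_0}) < \infty) \leq \sum_{n \geq 1} n \, 3^{2n} (1-p)^{n / (2 (2M+1)^2)}. $$
The factor $n$ accounts for the choice of the starting point of the contour along a horizontal line through $\pi_0$. Under the hypothesis $1 - p < 3^{-4(2M+1)^2}$, the general term is at most $n\, 3^{2n} 3^{-2n} \cdot \theta^n$ for some $\theta<1$ coming from the strict inequality. To make the sum strictly less than one, we only need a little extra margin.

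That margin is the main obstacle: the exponents must be matched carefully. We expect to refine the count to $n/4$ separated sites out of contour length $2n$, with $3^{2n}$ shapes. Alternatively, we can run the argument for the event that no open path from the set $\{(-2k,0),\dots,(2k,0)\}$, all of whose sites are open, survives. In that case contours must have length at least $2k$, and the tail of the series tends to zero as $k\to\infty$. The probability that all those starting sites are open is positive; it is at least $p$ to a power by~\eqref{eq:dependence} together with the FKG-free bound obtained by conditioning on separated subfamilies. Positivity of the probability of percolation from a finite open block then transfers to $\pi_0$. We obtain this transfer by requiring a finite open path from $\pi_0$ to that block, which has positive probability by the same finite-dependence product bound, combined with the contour estimate.
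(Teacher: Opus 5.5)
Your contour estimate is the same as the paper's. You get $n/2$ closed sites for a contour of length $2n$ (the paper's $l/4$), and greedily extract $n/(2(2M+1)^2)$ pairwise separated ones. Your inclusion--exclusion remark correctly justifies applying \eqref{eq:dependence} to closed sites. At the stated threshold this already gives the ratio $\theta = 9(1-p)^{1/(2(2M+1)^2)} < 1$, so the exponents are matched. Refining to $n/4$ separated sites is neither justified (your own safe count is $(n/2)/(2M+1)^2$) nor needed.

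The genuine gap is the step you flag yourself: $\sum_n n\theta^n < \infty$ does not make the sum smaller than one. This is exactly the step the paper defers to Durrett, and your workaround for it fails twice.
\begin{itemize}
\item \eqref{eq:dependence} constrains only pairwise separated families, and sites within distance $M$ of each other may be negatively correlated. So it gives no bound of the form $p^{c}$ on the probability that a block of $2k+1$ sites is entirely open. Conditioning on a separated subfamily says nothing about the other sites, and the union bound $1-(2k+1)(1-p)$ is useless once $k$ is large.
\item The transfer to $\pi_0$ does not work. An open path from $\pi_0$ to one site of the block does not put the other block sites in $O_{\pi_0}$, because sites on the same level are not connected. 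You would need $\pi_0$ to reach the entire block, and then intersect that event with the absence of closed contours of length of order $k$. Without FKG or genuine independence, this requires $P(\pi_0 \text{ reaches the block}) > \sum_{l \geq ck} l\theta^l$, and nothing supplies that comparison. Near the threshold $\theta$ is close to one, so the tail is not small for moderate $k$. Meanwhile the reaching probability decreases in $k$ and has no lower bound from \eqref{eq:dependence} at all.
\end{itemize}

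A correct ending avoids any lower bound on many nearby sites being open simultaneously. Declare the block $A_k$ wet rather than open. The contour argument then gives $P(\card (O_{A_k}) < \infty) \leq \sum_{l \geq ck} l\theta^l \to 0$. Next use $P(\card (O_{A_k}) = \infty) \leq \sum_{x \in A_k} P(\card (O_x) = \infty)$, with the same convention for each $O_x$. This gives positivity for some $x \in A_k$. It gives positivity for every $\pi_0$ only if the law itself is translation invariant; translation invariance of the condition \eqref{eq:dependence}, which is all your reduction to $\pi_0 = (0,0)$ uses, is not enough. Alternatively, lower the threshold so that the whole series is below one. That is harmless for the paper's block constructions, since $\ep$ is arbitrary there.
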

\begin{proof}
 The result is proved in~\cite[Section~10]{durrett_1984} when~$M = 0$, i.e., sites are independent, using a contour argument.
 The main ingredient is to show that the existence of a contour of length~$l$ induces a set of~$l/4$ sites that must be closed.
 Having a positive range~$M$, because
 $$ \card \{\pi' \in \lat_+ : |\pi' - \pi| \leq M \} \leq (2M + 1)^2, $$
 we can extract from the set of~$l/4$ closed sites a subset of~$l/(4 (2M + 1)^2)$ sites that are more than distance~$M$ apart, and therefore must be independent in addition to being closed.
 The rest of the proof follows the exact same arguments as in~\cite[Section~10]{durrett_1984}.
\end{proof} \\ \\
\noindent{\bf Percolation model 2 (extinction).}
 To define the second process, let~$\lat_- = \Z^d \times \Z_+$, which we turn into a directed graph~$\vec{\lat}_-$ by placing arrows
 $$ (m, n) \to (m', n') \ \iff \ |m_1 - m_1'| + \cdots + |m_d - m_d'| + |n - n'| = 1 \ \hbox{and} \ n' \leq n, $$
 so each site has $2d + 1$ outgoing arrows,~$2d$ horizontal arrows and one vertical upward arrow.
 As previously, we assume that sites are open with probability~$p$, closed with probability~$1 - p$, and that the range is~$M$, meaning that~\eqref{eq:dependence} holds for the graph distance on~$\vec{\lat}_-$.
 In addition, writing~$\pi \overset{c}{\longrightarrow} \pi'$ to indicate the existence of a directed path of closed sites from site~$\pi$ to site~$\pi'$, we now define the closed cluster starting at~$\pi$ as
 $$ C_{\pi} = \{\pi' \in \lat_- : \pi \overset{c}{\longrightarrow} \pi' \}. $$
 In the next sections, open sites will correspond, via a coupling, to space-time blocks in which the infection fails to propagate in the interacting particle system, therefore infected space-time blocks will correspond to sites that can be reached by a path of closed sites starting at level~$n = 0$.
 In particular, the key to proving extinction is the following lemma, which states that the closed sites do not percolate if the density~$p$ of open sites is close enough to one.
\begin{lemma}
\label{lem:perco-2}
 Assume that~$p > 1 - (2d + 1)^{- (2M + 1)^{d + 1}}$. Then,
 $$ P (\card (C_{\pi_0}) = \infty) = 0 \quad \hbox{for all} \quad \pi_0 \in \lat_-. $$
\end{lemma}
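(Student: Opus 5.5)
We prove Lemma~\ref{lem:perco-2} with a Peierls-type counting argument, in the same spirit as the proof of Lemma~\ref{lem:perco-1}. Fix~$\pi_0 \in \lat_-$. If~$\card (C_{\pi_0}) = \infty$, then each site of~$C_{\pi_0}$ can be reached from~$\pi_0$ by a directed closed path. Because every site of~$\vec{\lat}_-$ has finite out-degree~$2d + 1$, König's lemma applies: an infinite closed cluster contains directed closed paths of every length~$l$ starting at~$\pi_0$. We may also take these paths to be self-avoiding, since loops can be erased. Hence
 $$ P (\card (C_{\pi_0}) = \infty) \leq \inf_l \,P (\hbox{there is a self-avoiding closed directed path of length~$l$ starting at~$\pi_0$}). $$

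There are at most~$(2d + 1)^l$ directed paths of length~$l$ starting at~$\pi_0$, since each step chooses one of the~$2d + 1$ outgoing arrows. For a fixed self-avoiding path~$\Gamma$, we bound the probability that all~$l + 1$ of its sites are closed using the finite range of dependence. Extract greedily from the sites of~$\Gamma$ a subset whose sites are pairwise more than graph distance~$M$ apart. Each chosen site excludes at most~$\card \{\pi' \in \lat_- : |\pi' - \pi| \leq M \}$ sites, and this ball is contained in a box of side~$2M + 1$ in~$\Z^{d + 1}$, so it has at most~$(2M + 1)^{d + 1}$ sites. This yields at least~$l / (2M + 1)^{d + 1}$ sites that are mutually independent.

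One point needs care: \eqref{eq:dependence} is stated for open sites, whereas we need closed ones. For events depending on single sites at mutual distance more than~$M$, the range-$M$ assumption means the states are independent, so the closed events are independent as well. Taking complements via inclusion–exclusion gives~$(1 - p)^K$. Therefore
 $$ P (\Gamma \ \hbox{closed}) \leq (1 - p)^{l / (2M + 1)^{d + 1}}. $$

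The union bound over paths then gives
 $$ P (\card (C_{\pi_0}) = \infty) \leq \big((2d + 1)(1 - p)^{1 / (2M + 1)^{d + 1}}\big)^l \quad \hbox{for all} \ l. $$
The hypothesis~$p > 1 - (2d + 1)^{-(2M + 1)^{d + 1}}$ is exactly the statement that the base is strictly less than one. Letting~$l \to \infty$ gives zero, which proves the lemma.

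The main obstacle is the sparse-extraction step. The bound of~$(2M + 1)^{d + 1}$ on the size of a graph-distance ball must be checked: the graph distance on~$\vec{\lat}_-$ is the~$\ell^1$ distance, so the ball lies inside the~$\ell^\infty$ box of radius~$M$, and the bound holds, though crudely. The other point to verify is the passage from "open" to "closed" independence noted above; the rest is routine.
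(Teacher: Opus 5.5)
Your proof is correct and follows the paper's argument essentially step by step. Both use a union bound over at most $(2d+1)^l$ self-avoiding directed paths, extract at least $l/(2M+1)^{d+1}$ pairwise distant (hence independent) closed sites to get the bound $(1-p)^{l/(2M+1)^{d+1}}$, and note that the hypothesis on $p$ is exactly $(2d+1)(1-p)^{1/(2M+1)^{d+1}} < 1$. You also justify two points the paper leaves implicit: an infinite closed cluster forces closed self-avoiding paths of every length, by finite out-degree, and the range-$M$ condition stated for open sites yields $(1-p)^K$ for closed sites by inclusion--exclusion over subcollections.
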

\begin{proof}
 Because there are~$2d + 1$ outgoing arrows from each site in~$\vec{\lat}_-$, the number of self-avoiding directed paths of length~$l$ starting from~$\pi$ satisfies
\begin{equation}
\label{eq:perco-2a}
\card \{\hbox{self-avoiding directed paths of length~$l$ from~$\pi$} \} \leq (2d + 1)^l.
\end{equation}
 Now, let~$\vec{\pi} = (\pi_0, \pi_1, \dots, \pi_{l - 1}) \subset \lat_-$ be a self-avoiding directed path of length~$l$.
 Because
 $$ \card \{\pi' \in \lat_- : |\pi' - \pi| \leq M \} \leq (2M + 1)^{d + 1}, $$
 we can extract from~$\vec \pi$ a subset of at least~$l / (2M + 1)^{d + 1}$ sites that are more than distance~$M$ apart which, in view of~\eqref{eq:dependence}, must be independent.
 This implies that
\begin{equation}
\label{eq:perco-2b}
 P (\vec \pi \ \hbox{is closed}) \leq (1 - p)^{l / (2M + 1)^{d + 1}}.
\end{equation}
 Using~\eqref{eq:perco-2a}--\eqref{eq:perco-2b}, monotone convergence, and a union bound, we deduce that
 $$ \begin{array}{rcl}
      P (\card(C_{\pi_0}) = \infty) & \n = \n & \lim_{l \to \infty} P (\exists \,\vec{\pi} = (\pi_0, \pi_1, \dots, \pi_{l - 1}) \subset \lat_- \ \hbox{closed}) \vspace*{4pt} \\
                                    & \n \leq \n & \lim_{l \to \infty} (2d + 1)^l (1 - p)^{l / (2M + 1)^{d + 1}} = 0, \end{array} $$
 where the last equation follows from the condition on~$p$.
\end{proof}


\section{Perturbation arguments}
\label{sec:perturb}
 Note that, when~$\gamma = 0$, mutations from asymptomatic to symptomatic individuals are suppressed, meaning that symptomatic individuals cannot be created.
 In particular, the type~2 individuals die out exponentially fast, so the process is expected to behave eventually as in the absence of 2s, in which case the parameters~$\lambda_2$ and~$\phi_2$ become irrelevant.
 Similarly, when~$\gamma = \infty$, mutations from asymptomatic to symptomatic individuals are instantaneous, so the process is expected to behave as in the absence of~1s, in which case the parameters~$\lambda_1$ and~$\phi_1$ become irrelevant.
 In case one can control the speed of convergence of the process in the absence of type~2, respectively, type~1, using a block construction, the same conclusion typically holds when~$\gamma > 0$ is sufficiently small, respectively, when~$\gamma < \infty$ is sufficiently large.
 The idea is to prove that the processes with~$\gamma = 0$ and~$\gamma > 0$ small, respectively, the processes with~$\gamma = \infty$ and~$\gamma < \infty$ large, exhibit essentially the same behavior with high probability in each given block.
 The goal of this section is to formalize this technique, called a perturbation argument. \\
\indent
 Consider two integers~$L$ and~$T$ which will be interpreted later as the space and time scales in a block construction, and another integer~$M$ which will be interpreted as the range of some underlying percolation process.
 Consider also the space-time block
 $$ \B = [- ML, ML]^d \times \{1, 2, \ldots, N \} \times [0, MT] \subset \Z^d \times \{1, 2, \ldots, N \} \times \R_+. $$
 The parameters being fixed and finite, assume that the process is constructed from the graphical representation introduced in Section~\ref{sec:GR}, and let
 $$ \begin{array}{rcl}
     \Lambda_i & \n = \n & \{(x, i, t) \in \B : \hbox{there is a~$\circ$ at~$(x, i, t)$} \}, \vspace*{4pt} \\
       \Lambda & \n = \n & \{(x, i, t) \in \B : \hbox{there is a~\bul{1} or a~\bul{2} or the head of an arrow at~$(x, i, t)$} \}, \end{array} $$
 be respectively the set of space-time points in~$\B$ at which there is a potential mutation at level~$i$, and the set of space-time points in~$\B$ at which there is a potential infection.
 Note that, by definition of our graphical representation, arrows in the set~$\Lambda$ must point at~$(x, 1, t)$.
 Define also the set of times at which there is a potential infection in the block:
 $$ T_{\Lambda} = \{t \in [0, MT] : (x, i, t) \in \Lambda \ \hbox{for some} \ (x, i) \}. $$
 The event of interest to study the process with parameter~$\gamma$ small is
 $$ G_1 = \{\Lambda_i = \varnothing \ \forall \,i = 1, 2, \ldots, N \}. $$
 Note that, if the bottom~(time~0) of the block~$\B$ does not contain any~2s then, on the event~$G_1$, no~2s will appear in the block~$\B$.
 The following lemma shows that this event occurs with probability arbitrarily close to one when the parameter~$\gamma$ is small.
\begin{lemma}
\label{lem:perturb-1}
 For all~$\ep > 0$, there exists~$\gamma_- > 0$ such that~$P (G_1) > 1  - \ep$ for all~$\gamma < \gamma_-$.
\end{lemma}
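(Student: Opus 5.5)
The event $G_1$ only involves the Poisson processes of circles, which in the block $\B$ are independent of everything else. Its probability can therefore be computed exactly, with no need for any information about the process itself. The plan is to identify the number of circles in $\B$ as a Poisson random variable, write $P(G_1)$ in closed form, and choose $\gamma_-$ explicitly.

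First, by the graphical representation in Section~\ref{sec:GR}, each site $(x, i) \in \Delta$ carries an independent rate~$\gamma$ Poisson process of circles. The number of sites in the spatial part of the block is
$$ K = \card \big([-ML, ML]^d \cap \Z^d\big) \cdot N = (2ML + 1)^d N, $$
which is finite since $L$, $M$ and $N$ are fixed, and the time interval has length $MT$. By the superposition property, the total number of circles
$$ \card (\Lambda_1 \cup \Lambda_2 \cup \cdots \cup \Lambda_N) = \sum_{i = 1}^N \card (\Lambda_i) $$
is Poisson distributed with mean $\gamma K M T = \gamma (2ML + 1)^d N M T$.

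Since $G_1$ is the event that this count is zero, we get
$$ P (G_1) = \exp \big(- \gamma \,(2ML + 1)^d N M T\big), $$
which is decreasing in $\gamma$ and tends to one as $\gamma \to 0$. Given $\ep > 0$, it then suffices to take
$$ \gamma_- = \frac{- \ln (1 - \ep)}{(2ML + 1)^d N M T} > 0, $$
so that $P (G_1) > 1 - \ep$ for all $\gamma < \gamma_-$.

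There is no real obstacle here. The only point worth stressing is that $\gamma_-$ depends on $L$, $T$, $M$ and $N$. This is harmless because, in the later block constructions, these scales will be fixed first from the $\gamma = 0$ process, and only then will $\gamma$ be taken small.
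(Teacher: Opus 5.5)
Your proof is correct and matches the paper's: the number of circles in the block is Poisson with mean $\gamma (2ML + 1)^d N M T$, so $P (G_1) = \exp (- \gamma (2ML + 1)^d N M T)$, which exceeds $1 - \epsilon$ once $\gamma$ is small. Your explicit $\gamma_-$ makes this quantitative; it requires $\epsilon < 1$, but for $\epsilon \geq 1$ the claim is trivial since $P (G_1) > 0$.
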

\begin{proof}
 The number of mutations in the space-time block~$\B$ is given by
 $$ X = \poisson (\gamma (2ML + 1)^d NMT), $$
 from which it follows that
 $$ P (G_1) = P (X = 0) = \exp (- \gamma (2ML + 1)^d NMT) > 1 - \ep $$
 for all~$\gamma > 0$ sufficiently small.
\end{proof} \\ \\
 The event of interest to study the process with parameter~$\gamma$ large is
 $$ \textstyle G_2 = \{\forall \,(x, i, t) \in \Lambda, \,\max_{j = 1, 2, \ldots, N} \inf \{s > t : (x, j, s) \in \Lambda_j \} < \inf (T_{\Lambda} \cap (t, MT]) \}. $$
 This is the event that, each time there is a new infection at a cluster in the block, all~$N$ levels of this cluster experience a mutation~(so if a~1 is created at any level, it becomes~2) before any other potential infection in the block.
 In particular, on the event~$G_2$, the process in the block behaves essentially as if there were no~1s.
 The following lemma shows that this event occurs with probability arbitrarily close to one when the parameter~$\gamma$ is large.
\begin{lemma}
\label{lem:perturb-2}
 For all~$\ep > 0$, there exists~$\gamma_+ <\infty$ such that~$P (G_2) > 1  - \ep$ for all~$\gamma > \gamma_+$.
\end{lemma}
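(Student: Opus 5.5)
The plan is to reduce $G_2$ to a statement about a Poisson number of potential infections, each followed by a race between $N$ independent rate-$\gamma$ mutation clocks and the next potential infection. The key point is that the rate at which points of $\Lambda$ appear in $\B$ does not depend on $\gamma$.

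Concretely, $\Lambda$ is the set of points of a superposition of Poisson processes: bullets at rate $\phi_1 + \phi_2$ at each of the $(2ML+1)^d N$ sites, and arrow heads at level $1$ arriving at rate $2d N (\lambda_1 + \lambda_2)$ per cluster. Arrows with tail just outside the box only add a bounded number of neighbors. Hence $T_\Lambda$ is contained in the points of a homogeneous Poisson process on $[0, MT]$ with rate
$$ r = (2ML+1)^d N \,(\phi_1 + \phi_2 + 2d (\lambda_1 + \lambda_2)), $$
which is finite and independent of $\gamma$. Let $K = \card (T_\Lambda)$, so that $K$ is stochastically dominated by a $\poisson (rMT)$ variable. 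I first choose $k_0$ with $P (K > k_0) < \ep/2$.

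On the event $K \leq k_0$, fix a point $(x, i, t) \in \Lambda$ and condition on the infection structure. The mutation clocks are independent of $\Lambda$, and by the strong Markov property they restart after time $t$. The time until all $N$ levels of cluster $x$ have mutated is therefore a maximum $\max_j E_j$ of $N$ independent exponentials with rate $\gamma$. The time until the next point of $T_\Lambda$ after $t$ is at least an exponential with rate $r$, independent of the $E_j$ by independence of the Poisson processes; if there is no next point, the race is won trivially. A union bound gives
$$ P \bigl(\max_j E_j > \mathrm{Exp}(r)\bigr) \leq N \,P (E_1 > \mathrm{Exp}(r)) = \frac{N r}{r + \gamma}. $$
A second union bound over the at most $k_0$ points of $\Lambda$ then gives
$$ P (G_2^c) \leq \frac{\ep}{2} + k_0 \,\frac{N r}{r + \gamma}, $$
and choosing $\gamma_+$ so that $k_0 N r / (r + \gamma_+) < \ep/2$ finishes the proof.

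I expect the main obstacle to be the conditioning step. The points of $\Lambda$ are random and the union bound runs over a random index set, so the argument has to be made rigorous. I would handle it by ordering the points of $T_\Lambda$ as $t_1 < t_2 < \cdots$ and writing $G_2^c \subseteq \{K > k_0\} \cup \bigcup_{k \leq k_0} B_k$, where $B_k$ is the event that the $k$th race is lost. Each $P(B_k)$ is then bounded using the independence of the mutation clocks from the infection clocks and the memoryless property at the stopping time $t_k$. Simultaneous points have probability zero, so they can be ignored.
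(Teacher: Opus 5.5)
Your proposal is correct and follows the paper's proof. You bound the Poisson number of potential infections in $\B$ by a $\gamma$-independent threshold with probability at least $1 - \ep/2$, bound each race by a union over the $N$ levels giving $N r/(r+\gamma)$, and then union-bound over the at most $k_0$ races. The differences are cosmetic: the paper fixes the threshold $c\lambda$ via the generating-function bound $P(Y > c\lambda) \leq (2^c/e)^{-\lambda}$ and argues conditionally on $Y \leq c\lambda$, while your decomposition $G_2^c \subseteq \{K > k_0\} \cup \bigcup_{k \leq k_0} B_k$, using the strong Markov property at $t_k$, avoids that conditioning and is, if anything, the cleaner way to make the step rigorous.
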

\begin{proof}
 The first step is to control~$Y = \card (\Lambda)$, which corresponds to the random number of potential infections in the space-time block.
 Because potential internal infections originate from each level of each cluster at rate~$\phi_1 + \phi_2$, while external infections originate from each level of each cluster to each of the~$2d$ neighbors at rate~$\lambda_1 + \lambda_2$, we have
 $$ Y = \poisson (\lambda) \quad \hbox{where} \quad \lambda = (2ML + 1)^d \times (2dN (\lambda_1 + \lambda_2) + N (\phi_1 + \phi_2)) \times MT. $$
 In particular, the probability-generating function of~$Y$ is given by
\begin{equation}
\label{eq:PGF}
\phi_Y (s) = E (s^Y) = \sum_{k = 0}^{\infty} \,s^k P (Y = k) = \sum_{k = 0}^{\infty} \,\frac{s^k \lambda^k}{k!} \ e^{- \lambda} = e^{- \lambda (1 - s)}.
\end{equation}
 By Markov's inequality, there exists~$c = c (\ep, \lambda) < \infty$ such that
\begin{equation}
\label{eq:perturb-21}
  P (Y > c \lambda) = P (2^Y > 2^{c \lambda}) \leq 2^{- c \lambda} E (2^Y) = 2^{- c \lambda} \phi_Y (2) = (2^c / e)^{- \lambda} \leq \ep/2.
\end{equation}
 In addition, because potential infections in the block occur at rate~$\lambda$, while potential mutations at each level of each cluster occur at rate~$\gamma$, by the superposition property for independent Poisson processes and an application of the union bound over all~$N$ levels, we have
\begin{equation}
\label{eq:perturb-22}
\begin{array}{l}
  P (\max_{j = 1, 2, \ldots, N} \inf \{s > t : (x, j, s) \in \Lambda_j \} > \inf (T_{\Lambda} \cap (t, MT]) \}) \vspace*{4pt} \\ \hspace*{20pt} \leq
  N P (\inf \{s > t : (x, 1, s) \in \Lambda_1 \} > \inf (T_{\Lambda} \cap (t, MT]) \}) \vspace*{4pt} \\ \hspace*{20pt} =
  N \lambda / (\gamma + \lambda) \leq \ep / (2c \lambda) \end{array}
\end{equation}
 for all~$\gamma < \infty$ sufficiently large.
 Combining~\eqref{eq:perturb-21} and~\eqref{eq:perturb-22}, we conclude that
 $$ P (G_2^c) \leq P (G_2^c \,| \,Y \leq c \lambda) + P (Y > c \lambda) \leq c \lambda \times \ep / (2c \lambda) + \ep/2 = \ep $$
 for all~$\gamma < \infty$ sufficiently large.
\end{proof}


\section{Proof of Theorems~\ref{th:lambdalarge}.b}
\label{sec:lambdalarge}
 As previously mentioned, a major difficulty in proving Theorem~\ref{th:lambdalarge}.b, as well as the remaining theorems, is that, for arbitrary values of the infection rates, our epidemic model is not generally monotone with respect to the mutation rate~$\gamma$ or the cluster size~$N$.
 In particular, the couplings introduced in Section~\ref{sec:GR} fail to provide any useful ordering with other processes when~$\lambda_1 > \lambda_2$, thus preventing a direct comparison with either the individual recovery process or the contact process with an asymptomatic state.
 To prove the theorem, we use instead a block construction to show survival, but also control the rate of expansion of the process, in the limiting case~$\gamma = 0$, and then apply a perturbation argument to extend the result to~$\gamma > 0$ small. \\
\indent
 Setting~$\gamma = 0$, the~2s initial present die exponentially fast, so our process essentially reduces to the individual recovery process~$\irp (\lambda_1, \phi_1, N)$.
 By~\cite[Lemma~1.1]{belhadji_lanchier_2005}, this process is monotone with respect to~$N$, so it dominates the standard contact process~$\zeta_t$ with parameter~$\lambda_1$, in the sense that, constructing the contact process using the~1-arrows and the death marks at level~1 from the graphical representation of our process, in the absence of~2s,
 $$ \begin{array}{rcl}
    \zeta_0 (x) \leq \xi_0 (x, 1) \ \ \forall \,x \in \Z^d & \Longrightarrow & \zeta_t (x) \leq \xi_t (x, 1) \ \ \forall \,(x, t) \in \Z^d \times \R_+.
    \end{array} $$
 Because~$\lambda_1 > \lambda_c$, this underlying contact process is supercritical and survives.
 While the block construction of Bezuidenhout and Grimmett~\cite{bezuidenhout_grimmett_1990} is the standard tool to compare a supercritical contact process with oriented percolation, we will start instead from the more recent block construction developed by Durrett and Neuhauser~\cite{durrett_neuhauser_1997} for the multitype contact process that gives us more control on the~1s.
 More precisely, we first get rid of the~2s to give space for the~1s to spread, then use the construction from~\cite{durrett_neuhauser_1997} to move the~1s around in the space-time region void of~2s, and finally apply a perturbation argument to extend the result to~$\gamma > 0$ small. \\
\indent
 First, we let~$L$ be a large scale parameter to be fixed later.
 Recalling the first percolation model introduced in Section~\ref{sec:perco}, for all~$(m, n) \in \lat_+$, we consider the space-time blocks
 $$ \begin{array}{rcl}
	 \A_{m, n}  & \n = \n & (mLe_1 + [-L, L]^d) \times \{1, 2, \ldots, N \} \times \{2nL^2 \}, \vspace*{4pt} \\
	 \A_{m, n}' & \n = \n & [mLe_1 + [-L, L]^d) \times \{1, 2, \ldots, N \} \times \{(2n + 1) L^2 \}, \vspace*{4pt} \\
	 \B_{m, n}  & \n = \n & (mLe_1 + [- 2L, 2L]^d) \times \{1, 2, \ldots, N \} \times [2nL^2, (2n + 1) L^2], \vspace*{4pt} \\
     \B_{m, n}' & \n = \n & (mLe_1 + [- 3L, 3L]^d) \times \{1, 2, \ldots, N \} \times [(2n + 1) L^2, (2n + 2) L^2], \vspace*{4pt} \\
     \C_{m, n}  & \n = \n & (mLe_1 + [- 3L, 3L]^d) \times \{1, 2, \ldots, N \} \times [2nL^2, (2n + 2) L^2], \end{array} $$
 where~$e_1$ is the first unit vector in~$\Z^d$.
 These regions are shown in Figure~\ref{fig:spread1}.
\begin{figure}[t!]
\centering
\scalebox{0.40}{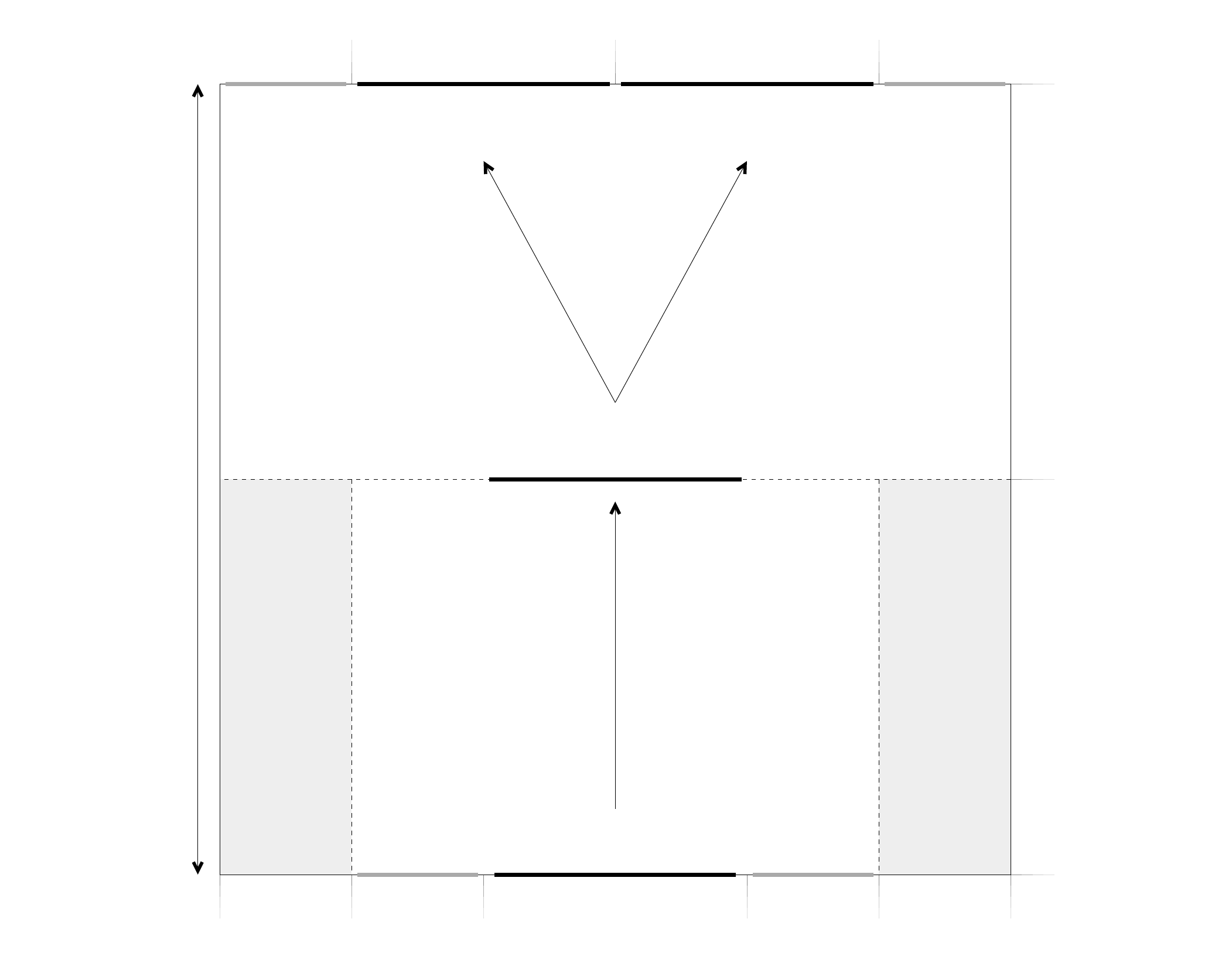}
\caption{\upshape{
  Picture of the blocks and events used in the proof of Theorem~\ref{th:lambdalarge}.b when~$d = 1$.
  The white region in the picture is the region void of~2s where the process dominates a supercritical contact process.}}
\label{fig:spread1}
\end{figure}
 To shorten the notations, in this section and the next ones, we also consider the projections
 $$ \pi_1 (x, i, t) = x, \quad \pi_{12} (x, i, t) = (x, i), \quad \pi_{13} (x, i, t) = (x, t), $$
 defined on~$\Z^d \times \{1, 2, \ldots, N \} \times \R_+$.
 Following Durrett and Neuhauser~\cite{durrett_neuhauser_1997}, we partition the flat region~$\A_{m, n}$ into small cubes of side length $L^{0.1}$, and define the good events
 $$ \begin{array}{rcl}
      E_{m, n} & \n = \n & \{\hbox{every small cube in~$\A_{m, n}$ contains at least one~1} \}, \vspace*{4pt} \\
      F_{m, n} & \n = \n & \{\hbox{there are no~2s in~$\pi_{12} (\B_{m, n})$ at time~$2nL^2$} \}. \end{array} $$
 In words, the first event indicates that there are many~1s in the flat region~$\A_{m, n}$ and the second event that there are no~2s at the bottom of the block~$\B_{m, n}$.
 Similarly, we partition the flat region~$\A_{m, n}'$ into small cubes of side length $L^{0.1}$, and define
 $$ \begin{array}{rcl}
      E_{m, n}' & \n = \n & \{\hbox{every small cube in~$\A_{m, n}'$ contains at least one~1} \}, \vspace*{4pt} \\
      F_{m, n}' & \n = \n & \{\hbox{there are no~2s in~$\pi_{12} (\B_{m, n}')$ at time~$(2n + 1) L^2$} \}. \end{array} $$
 We first prove that, with probability close to one when~$L$ is large, the potential~2s around the space-time block~$\B_{m, n}$ die by time~$(2n + 1) L^2$, thereby preventing them from obstructing the spatial expansion of the~1s.
 To do so, we introduce our first good event
 $$ \begin{array}{l}
      G_{m, n}^1 = \{\forall \,(x, i) \in \pi_{12} (\C_{m, n} \setminus \B_{m, n}), \vspace*{4pt} \\
                   \hspace*{60pt} \exists \,t \in [2nL^2, (2n + 1) L^2) : \hbox{there is a~$\times$ at~$(x, i, t)$} \}, \end{array} $$
 the event that there is at least one death mark at each site in the gray regions of Figure~\ref{fig:spread1}.
 The following lemma shows that this event occurs with probability close to one.
\begin{lemma}
\label{lem:lambdalarge-clean}
 For all~$\ep > 0$,~$P (G_{m, n}^1) \geq 1 - \ep / 4$ for all~$L$ large.
\end{lemma}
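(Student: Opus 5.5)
The event $G_{m,n}^1$ only involves the Poisson processes of recovery marks $\times$ at the sites $(x,i) \in \pi_{12}(\C_{m,n} \setminus \B_{m,n})$ during the time window $[2nL^2, (2n+1)L^2)$, of length $L^2$. These processes are independent across sites and do not depend on the configuration or on the other rates. The plan is therefore a direct computation followed by a union bound: show that the probability of the complement decays in $L$ much faster than the number of sites grows.

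\textbf{Step 1 (single site).} For a fixed site $(x,i)$, the number of crosses in the window is $\poisson(L^2)$. Hence
$$ P(\hbox{no $\times$ at $(x,i,t)$ for all $t \in [2nL^2, (2n+1)L^2)$}) = e^{-L^2}. $$

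\textbf{Step 2 (counting sites).} The projection $\pi_{12}(\C_{m,n} \setminus \B_{m,n})$ is contained in $(mLe_1 + [-3L, 3L]^d) \times \{1, \ldots, N\}$. It therefore has at most $N(6L+1)^d$ elements.

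\textbf{Step 3 (union bound).} Combining the two steps,
$$ P\big((G_{m,n}^1)^c\big) \leq N (6L+1)^d \, e^{-L^2}. $$
The right-hand side tends to $0$ as $L \to \infty$, because $N$, $d$ are fixed and the exponential decay dominates the polynomial growth. Choosing $L$ large enough makes it at most $\ep/4$.

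There is no real obstacle here. The only point to note is that the bound is uniform in $(m,n)$, by translation invariance of the graphical representation, and does not depend on the infection or mutation rates, since only the rate-one recovery clocks are involved.
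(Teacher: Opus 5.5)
Your proof is correct and is essentially the paper's argument: both compute the per-site probability $e^{-L^2}$ of seeing no $\times$ in a window of length $L^2$ and bound the number of sites by $(6L+1)^d N$. The only difference is that the paper multiplies over sites using independence to get $(1-e^{-L^2})^{(6L+1)^d N} \to 1$, while your union bound gives the equivalent estimate $1 - N(6L+1)^d e^{-L^2}$ directly, without needing independence and without the paper's ``$\approx$'' step.
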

\begin{proof}
 Because the death marks occur at each site at rate one, and because
 $$ \begin{array}{rcl}
    \pi_{12} (\C_{m, n} \setminus \B_{m, n}) & \hbox{has} & ((6L + 1)^d - (4L + 1)^d) N \leq (6L + 1)^d N \ \hbox{sites}, \vspace*{4pt} \\
                       (2nL^2, (2n + 1) L^2) & \hbox{has} & L^2 \ \hbox{units of time}, \end{array} $$
 for all~$L$ large, the probability of the event~$G_{m, n}^1$ is given by
 $$ \begin{array}{rcl}
      P (G_{m, n}^1) & \n \geq \n & \Big(P (\poisson (L^2) \neq 0) \Big)^{(6L + 1)^d N} = \Big(1 - e^{-L^2} \Big)^{(6L + 1)^d N} \vspace*{4pt} \\
                     & \n = \n & \exp \Big((6L + 1)^d N \ln (1 - e^{-L^2}) \Big) \approx \exp \Big(- (6L + 1)^d N e^{-L^2} \Big). \end{array} $$
 The lemma follows noticing that this goes to one as~$L$ goes to infinity.
\end{proof} \\ \\
 On the event~$G_{m, n}^1$, all the~2s at the bottom of~$\C_{m, n} \setminus \B_{m, n}$ die by time~$(2n + 1) L^2$.
 Assuming in addition that~$\gamma = 0$, in which case no~2s can be created, the block~$\B_{m, n}'$, which includes the bottom of~$\B_{m \pm 1, n + 1}$, does not contain any~2s.
 In terms of events, this can be written as
\begin{equation}
\label{eq:lambdalarge-no2}
  F_{m, n} \cap G_{m, n}^1 \subset F_{m, n}' \subset F_{m - 1, n + 1} \cap F_{m + 1, n + 1} \quad \hbox{when} \quad \gamma = 0.
\end{equation}
 We can now prove that the~1s persist through the block~$\B_{m, n}$, which is done in Lemma~\ref{lem:persistence}, and then expand through the block~$\B_{m, n}'$, which is done in Lemma~\ref{lem:expansion}.
\begin{lemma}
\label{lem:persistence}
 Let~$\gamma = 0$ and~$\lambda_1 > \lambda_c$.
 Then, for all~$\ep > 0$, there exists an event~$G_{m, n}^2$ measurable with respect to the graphical representation in~$\B_{m, n}$ such that
 $$ E_{m, n} \cap F_{m, n} \cap G_{m, n}^2 \subset E_{m, n}' \quad \hbox{and} \quad P (G_{m, n}^2) \geq 1 - \ep / 4 \ \hbox{for all~$L$ large}. $$
\end{lemma}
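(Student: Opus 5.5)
We reduce to a statement about the supercritical contact process and import the Durrett--Neuhauser persistence estimate. With~$\gamma = 0$ no~2s are ever created, and on~$F_{m, n}$ there are no~2s at time~$2nL^2$ anywhere in~$\pi_{12} (\B_{m, n})$. The catch is that~2s located outside~$\B_{m, n}$ could still influence the evolution inside. They cannot enter by mutation, and they could only infect neighbouring clusters, turning healthy clusters into~1s. So the only way the exterior affects the interior is by adding~1s near the boundary. Adding~1s can only help the event~$E_{m, n}'$ we want. This is made rigorous by the attractiveness part of Lemma~\ref{lem:irpa} applied to the process restricted to~$\B_{m, n}$ with healthy boundary conditions.

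We now compare with a contact process. Inside~$\B_{m, n}$, in the absence of~2s, the process is~$\irp (\lambda_1, \phi_1, N)$. As recalled above, it dominates the contact process~$\zeta_t$ built from the~1-arrows between level-one sites and the crosses at level one. We let~$\zeta_t$ start at time~$2nL^2$ from the configuration of level-one infected sites in~$mLe_1 + [-L, L]^d$ and run it using only the graphical representation inside~$\B_{m, n}$, i.e., it is killed upon exiting~$mLe_1 + [-2L, 2L]^d$. By the vertical convention, on~$E_{m, n}$ every small cube of side~$L^{0.1}$ in~$\A_{m, n}$ has a site whose level~1 is infected. Hence~$\zeta$ starts with at least one occupied site per small cube. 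The domination then gives~$\zeta_t (x) \leq \ind \{\xi_t (x, 1) \neq 0 \}$, and with no~2s present this means an infected level-one site carries a~1. It therefore suffices to show that~$\zeta$ at time~$(2n + 1) L^2$ has an occupied site in every small cube of~$\A_{m, n}'$.

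This last estimate is exactly the content of the Durrett--Neuhauser block construction for a supercritical contact process~\cite{durrett_neuhauser_1997}. We define~$G_{m, n}^2$ as the event that the restricted contact process, started from any configuration with one particle per small cube of~$mLe_1 + [-L, L]^d$, has a particle in every small cube of the same region after time~$L^2$. For~$\lambda_1 > \lambda_c$ its probability tends to one as~$L \to \infty$. The proof has three parts. First, each initial particle survives and spreads with probability bounded below, and the cubes contain many particles, so by independence at least one particle per mesoscopic region survives with overwhelming probability. Second, by the shape theorem~\cite{bezuidenhout_grimmett_1990}, the surviving process couples with the process started from all ones inside a linearly growing ball. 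The time~$L^2$ and the box of size~$2L$ leave ample room for this: growth is linear, so within~$L^2$ the region is covered while boundary effects stay at distance~$L$. Third, the upper-invariant measure has positive density, so a large-deviation bound shows that each cube of side~$L^{0.1}$ is occupied except with probability~$\exp (- c L^{0.1 \cdot \alpha})$. A union bound over the~$O (L^{0.9 d})$ cubes finishes the argument. Finally, monotonicity of~$\zeta$ in the initial configuration (attractiveness) lets us quantify over all admissible initial configurations through the minimal ones. Measurability of~$G_{m, n}^2$ with respect to the graphical representation in~$\B_{m, n}$ holds by construction.

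The main obstacle is the third step: uniform occupation of all small cubes at time~$L^2$ for the killed process, starting from a sparse initial set. I would quote it directly from~\cite{durrett_neuhauser_1997}, where it is the basic block estimate, rather than reprove it.
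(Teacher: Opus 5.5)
\textbf{There is a gap: the event $G_{m, n}^2$ you define has probability tending to zero, not one.}

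Your reduction to a contact process is sound, and it follows the paper. With $\gamma = 0$ and on $F_{m, n}$, no 2s appear in $\B_{m, n}$, and infections from outside can only add 1s. The contact process built from the level-one 1-arrows and crosses, killed outside $mLe_1 + [-2L, 2L]^d$, is dominated by the level-one infections of $\xi$. The vertical convention transfers $E_{m, n}$ to it.

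The problem is the event you then define. You take $G_{m, n}^2$ to be the event that the killed process has a particle in every small cube of $\A_{m, n}'$ starting from \emph{every} admissible configuration at time $2nL^2$ simultaneously. You assert that its probability tends to one; in fact it tends to zero. Attractiveness reduces the intersection to minimal configurations, but there are roughly $(L^{0.1 d})^{(2L^{0.9})^d}$ of them, and no per-configuration estimate can pay for that.

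Concretely, for each site $x$ let $D_x$ be the event that the first mark after time $2nL^2$, among the crosses at $(x, 1)$ and the 1-arrows leaving $(x, 1)$, is a cross occurring before time $2nL^2 + 1$. These events are independent over $x$, with $P (D_x) \geq p_0 = (1 - e^{-1}) / (1 + 2d \lambda_1)$. So the probability that some small cube of $\A_{m, n}$ contains no site $x$ with $D_x$ is at most $O (L^{0.9 d}) (1 - p_0)^{L^{0.1 d}} \to 0$. Choosing one such site in each cube gives an admissible configuration from which, by additivity, the killed process is empty after time $2nL^2 + 1$, so $E_{m, n}'$ fails.

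Hence your step ``quantify over all admissible initial configurations through the minimal ones'' is exactly where the argument breaks. Quoting Durrett--Neuhauser cannot repair it, because their block estimate bounds the failure probability for one (arbitrary) initial configuration at a time.

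\textbf{How to fix it.} Read literally, with $G_{m, n}^2$ the same event for every configuration in $E_{m, n} \cap F_{m, n}$, the lemma cannot hold. The same construction, with the internal-infection marks at $(x, 1)$ added, applies to $\xi$ itself when nothing enters $\B_{m, n}$ from outside. So the lemma has to be read as the paper's proof reads it. For each configuration in $E_{m, n} \cap F_{m, n}$ there is a good event, measurable with respect to the graphical representation in $\B_{m, n}$ and allowed to depend on that configuration, whose probability is at least $1 - \ep / 4$ uniformly in the configuration.

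This is all the comparison theorem invoked in the proof of Theorem~\ref{th:lambdalarge}.b needs. It is also what the paper extracts from the proof of Durrett--Neuhauser's Proposition~3.1. There, dual paths from $\A_{m, n}'$, redirected by the repositioning algorithm, land on occupied sites of $\A_{m, n}$. By their Lemma~3.7 and the polynomial number of paths, these dual paths stay inside $\B_{m, n}$ with high probability.

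With this change your argument goes through. Attractiveness is then used only to pass from a given configuration to a single minimal one below it. Your killed process supplies the measurability in place of the paper's containment of the dual paths.

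\textbf{A second, smaller error.} Your heuristic for the second step is wrong as stated. In time $L^2$, influence travels a distance of order $L^2 \gg L$, so boundary effects do not stay at distance $L$. The shape theorem for the unrestricted process therefore says nothing directly about the killed one. Controlling the killed process is precisely the job of the repositioning algorithm.
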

\begin{proof}
 The key ingredient is the construction due to Durrett and Neuhauser~\cite{durrett_neuhauser_1997}, which gives the result for the supercritical contact process.
 More precisely, let~$\zeta_t$ be the standard contact process with parameter~$\lambda_1 > \lambda_c$ and configuration
 $$ \zeta_{2nL^2} (x) = \ind \{\xi_{2nL^2} (x, 1) \} \ \ \hbox{for all} \ \ x \in \Z^d \quad \hbox{at time} \ 2nL^2, $$
 so the process has at least one~1 in each small cube in the flat region~$\A_{m, n}$.
 Then, it follows from the proof of~\cite[Proposition~3.1]{durrett_neuhauser_1997} that each small cube in~$\A_{m, n}'$ is also occupied by at least one~1 with probability arbitrarily close to one when~$L$ is sufficiently large.
 Their proof relies on the existence of dual paths starting from~$\A_{m, n}'$ and landing on occupied sites in~$\A_{m, n}$ that they can redirect using a so-called repositioning algorithm.
 In addition, \cite[Lemma~3.7]{durrett_neuhauser_1997} shows that the position of these dual paths is controlled by large deviation estimates giving probabilities that are exponentially close to one, whereas the number of dual paths starting from~$\A_{m, n}'$ is only polynomial.
 In particular, with probability arbitrarily close to one when~$L$ is large, all the dual paths constructed from the repositioning algorithm are contained in~$\B_{m, n}$.
 This implies the existence of an event~$G_{m, n}^2$ measurable with respect to the graphical representation in~$\B_{m, n}$ such that
 $$ E_{m, n} \cap G_{m, n}^2 \subset E_{m, n}' \quad \hbox{and} \quad P (G_{m, n}^2) \geq 1 - \ep / 4 \ \hbox{for all~$L$ large} $$
 for the contact process~$\zeta_t$.
 To deduce the lemma, recall that, in the absence of~2s initially, and when~$\gamma = 0$, which prevents the creation of~2s later on, we have
 $$ \xi_t = \irpa (\lambda_1, \lambda_2, \phi_1, \phi_2, \gamma, N) = \irp (\lambda_1, \phi_1, N) \geq \cp (\lambda_1) = \zeta_t. $$
 Since in addition the event~$F_{m, n}$ and the assumption~$\gamma = 0$ force the absence of~2s in the block~$\B_{m, n}$, and the event~$G_{m, n}^2$ is measurable with respect to the graphical representation in this block~(the dual paths of interest are contained in this block), we deduce that
 $$ E_{m, n} \cap F_{m, n} \cap G_{m, n}^2 \subset E_{m, n}' \quad \hbox{when} \quad \gamma = 0 $$
 for the epidemic model~$\xi_t$.
 This completes the proof.
\end{proof} \\ \\
 Using the same argument, we can also prove expansion through~$\B_{m, n}'$.
\begin{lemma}
\label{lem:expansion}
 Let~$\gamma = 0$ and~$\lambda_1 > \lambda_c$.
 Then, for all~$\ep > 0$, there exists an event~$G_{m, n}^3$ measurable with respect to the graphical representation in~$\B_{m, n}'$ such that
 $$ E_{m, n}' \cap F_{m, n}' \cap G_{m, n}^3 \subset E_{m - 1, n + 1} \cap E_{m + 1, n + 1} \quad \hbox{and} \quad P (G_{m, n}^3) \geq 1 - \ep / 4 \ \hbox{for all~$L$ large}. $$
\end{lemma}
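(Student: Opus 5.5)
The plan is to mimic the proof of Lemma~\ref{lem:persistence}, using the same Durrett--Neuhauser machinery but now with a spatial shift of~$\pm L e_1$ during the second half of the time step. Let~$\zeta_t$ be the contact process with parameter~$\lambda_1 > \lambda_c$, built from the~1-arrows and the crosses at level~1 of the graphical representation. Start it at time~$(2n + 1) L^2$ from the configuration~$\zeta_{(2n+1)L^2} (x) = \ind \{\xi_{(2n+1)L^2} (x, 1) = 1 \}$. On~$E_{m, n}'$, every small cube of side~$L^{0.1}$ in~$\A_{m, n}'$ contains a~1. The expansion step in the proof of~\cite[Proposition~3.1]{durrett_neuhauser_1997} covers exactly this situation. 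It treats a box of radius~$L$ whose small cubes are all occupied, run for time~$L^2$, and shows that every small cube in the two boxes shifted by~$\pm L e_1$, namely the flat regions~$\A_{m \pm 1, n + 1}$ at time~$(2n + 2) L^2$, is occupied with probability tending to one.

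The argument is dual. From each small cube in~$\A_{m \pm 1, n + 1}$, we start dual paths of length~$L^2$ and redirect them with the repositioning algorithm so that they land on an occupied site of~$\A_{m, n}'$. The number of starting cubes is only polynomial in~$L$. By~\cite[Lemma~3.7]{durrett_neuhauser_1997}, each dual path stays within distance of order~$L$ of its starting cube except on an event of probability exponentially small in a power of~$L$, so a union bound handles all paths at once. The starting points lie in~$m L e_1 + [-2L, 2L]^d$ and the landing points in~$m L e_1 + [-L, L]^d$. We therefore expect the paths to remain in~$m L e_1 + [-3L, 3L]^d$ with probability at least~$1 - \ep / 4$ for~$L$ large. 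We define~$G_{m, n}^3$ as the event that the repositioning algorithm succeeds for every starting cube with all dual paths contained in~$\B_{m, n}'$. This event is measurable with respect to the graphical representation in~$\B_{m, n}'$, and on~$E_{m, n}' \cap G_{m, n}^3$ the contact process~$\zeta_t$ has a~1 in every small cube of~$\A_{m \pm 1, n + 1}$.

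To transfer this to~$\xi_t$, note that~$F_{m, n}'$ means there are no~2s in~$\pi_{12} (\B_{m, n}')$ at time~$(2n + 1) L^2$. Since~$\gamma = 0$, no~2s are created afterwards. The dual paths only probe the inside of~$\B_{m, n}'$, and~2s from outside cannot affect the occupation of sites determined by dual paths inside the block through~1-arrows and crosses. Inside the block, the process therefore coincides with~$\irp (\lambda_1, \phi_1, N)$. By the monotonicity in~$N$ from~\cite[Lemma~1.1]{belhadji_lanchier_2005}, this process dominates~$\zeta_t$ at level~1. Hence~$E_{m, n}' \cap F_{m, n}' \cap G_{m, n}^3 \subset E_{m - 1, n + 1} \cap E_{m + 1, n + 1}$.

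The main obstacle is the locality claim. In~$\xi_t$, a 2 or infected individual lying outside~$\B_{m, n}'$ could in principle occupy a target cluster and block a~1-arrow, since external infections require the cluster to be entirely healthy. I would handle this by arguing via the dominated process~$\zeta_t$ directly: a~1 present in~$\zeta_t$ at~$(x, 1)$ forces~$\xi_t (x, 1) \neq 0$. Moreover, the only~2s that could be present are excluded inside the block by~$F_{m, n}'$. It remains to check that the domination~$\zeta \leq \xi$ at level~1 holds along the dual paths using only the graphical representation inside~$\B_{m, n}'$. That is, I need an occupied level-1 site of~$\xi$ to remain type~1 or~2, and with no~2s present this means type~1, as in the coupling of Lemma~\ref{lem:cpa}.
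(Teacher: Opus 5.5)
Your proposal is correct and follows the paper's proof. In both, the Durrett--Neuhauser repositioning argument is run for the contact process built from the 1-arrows and level-1 crosses, the dual paths are confined to $\B_{m, n}'$ to define $G_{m, n}^3$, and the result is transferred to $\xi_t$ using that $F_{m, n}'$ and $\gamma = 0$ exclude 2s from the block, together with $\irp (\lambda_1, \phi_1, N) \geq \cp (\lambda_1)$. The paper leaves the locality issue you flag implicit, and your path-by-path check settles it: in a 2-free block, level 1 of each cluster on the path stays a 1 until a level-1 cross, and an arrow into an already infected cluster only meets 1s there.
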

\begin{proof}
 As previously, the repositioning algorithm~\cite{durrett_neuhauser_1997} implies the existence of an event~$G_{m, n}^3$ measurable with respect to the graphical representation in~$\B_{m, n}'$ such that
 $$ E_{m, n}' \cap G_{m, n}^3 \subset E_{m - 1, n + 1} \cap E_{m - 1, n + 1} \quad \hbox{and} \quad P (G_{m, n}^3) \geq 1 - \ep / 4 \ \hbox{for all~$L$ large} $$
 for the contact process~$\zeta_t$ with parameter~$\lambda_1 > \lambda_c$.
 Because~$F_{m, n}'$ and the assumption~$\gamma = 0$ now force the absence of~2s in the block~$\B_{m, n}'$, we deduce that
 $$ E_{m, n}' \cap F_{m, n}' \cap G_{m, n}^3 \subset E_{m - 1, n + 1} \cap E_{m + 1, n + 1} \quad \hbox{when} \quad \gamma = 0 $$
 for the epidemic model~$\xi_t$.
\end{proof} \\ \\
 Using Lemma~\ref{lem:perco-1}~(percolation result), Lemma~\ref{lem:perturb-1}~(perturbation argument) and the previous three lemmas, we can now prove the theorem. \\ \\
\begin{proofof}{Theorem~\ref{th:lambdalarge}.b}
 We say that site~$(m, n) \in \lat_+$ is a good site if
 $$ E_{m, n} \cap F_{m, n} = \left\{\begin{array}{c}
             \hbox{every small cube in~$\A_{m, n}$ contains at least one~1} \vspace*{2pt} \\
             \hbox{and there are no~2s in~$\pi_{12} (\B_{m, n})$ at time~$2nL^2$} \end{array} \right\} $$
 occurs.
 Take~$\ep < 3^{- 4 (2M + 1)^2}$ with~$M = 6$ since
 $$ \C_{m, n} \cap \C_{m', n'} = \varnothing \quad \hbox{whenever} \quad |m - m'| > 6 \ \ \hbox{or} \ \ |n - n'| > 1. $$
 To study the process with~$\gamma > 0$ small, we also define the events
 $$ G_{m, n}^4 = \{\hbox{there are no~$\circ$ in~$\C_{m, n}$} \} \quad \hbox{and} \quad G_{m, n} = G_{m, n}^1 \cap G_{m, n}^2 \cap G_{m, n}^3 \cap G_{m, n}^4. $$
 Because no~2 can be created in~$\C_{m, n}$ on the event~$G_{m, n}^4$, it follows from~\eqref{eq:lambdalarge-no2} that
 $$ F_{m, n} \cap G_{m, n} \subset F_{m, n} \cap G_{m, n}^1 \cap G_{m, n}^4 \subset F_{m, n}' \cap G_{m, n}^4 \subset F_{m - 1, n + 1} \cap F_{m + 1, n + 1}. $$
 In fact, on the event~$F_{m, n} \cap G_{m, n}$, the white subset of the space-time block~$\C_{m, n}$ in Figure~\ref{fig:spread1} does not contain any~2s.
 Using also the first part of Lemmas~\ref{lem:persistence} and~\ref{lem:expansion}, we deduce that
\begin{equation}
\label{eq:lambdalarge-inclusion}
\begin{array}{rcl}
   E_{m, n} \cap F_{m, n} \cap G_{m, n} & \n \subset \n &
   E_{m, n}' \cap F_{m, n}' \cap G_{m, n} \vspace*{4pt} \\ & \n \subset \n &
  (E_{m - 1, n + 1} \cap F_{m - 1, n + 1}) \cap (E_{m + 1, n + 1} \cap F_{m + 1, n + 1}). \end{array}
\end{equation}
 In addition, by Lemma~\ref{lem:lambdalarge-clean} and the second part of Lemmas~\ref{lem:persistence} and~\ref{lem:expansion}, we can find
 $$ L = L (\ep) < \infty \ \ \hbox{such that} \ \ P (G_{m, n}^i) \geq 1 - \ep / 4 \ \ \hbox{for} \ \ i = 1, 2, 3. $$
 Once~$L$ is fixed, using Lemma~\ref{lem:perturb-1} with~$\C_{m, n}$ instead of~$\B$, we can also find
 $$ \gamma_- = \gamma_- (\ep, L) > 0 \ \ \hbox{such that} \ \ P (G_{m, n}^4) \geq 1 - \ep / 4 \ \ \hbox{for all} \ \ \gamma < \gamma_-. $$ 
 Using the union bound, we deduce that
\begin{equation}
\label{eq:lambdalarge-proba}
\textstyle P (G_{m, n}) \geq 1 - \sum_{i = 1, 2, 3, 4} P ((G_{m, n}^i)^c) \geq 1 - \ep.
\end{equation}
 In view of~\eqref{eq:lambdalarge-inclusion} and~\eqref{eq:lambdalarge-proba}, we may apply the comparison result~\cite[Theorem~4.3]{durrett_1995} to deduce that the set of good sites dominates stochastically the set of open sites in a~6-dependent oriented site percolation process on the directed graph $\vec{\lat}_+$ with density parameter $1 - \epsilon$.
 Lemma~\ref{lem:perco-1} and our choice of~$\epsilon$ imply that the open sites percolate, so the good sites percolate as well, which proves survival and completes the proof of the theorem.
\end{proofof}


\section{Proof of Theorems~\ref{th:philarge}.b and~\ref{th:philarge}.c}
\label{sec:philarge}
 As previously, the proof relies on the combination of a block construction and a perturbation argument.
 In the limiting case~$\gamma = 0$, the process essentially behaves as if there were no~2s so, once infected, a cluster will remain infected for an arbitrarily large time when~$\phi_1$ is large, and assuming in addition that~$\lambda_1 > 0$ even small will result in survival.
 In the limiting case~$\gamma = \infty$, because all the~1s turn instantaneously into~2s and so the process behaves as if there were no~1s, the same conclusion holds after reversing the roles of the 1s and the 2s.
 To turn our intuition into a rigorous proof, the idea is to use a block construction to control the rate of growth of the infection for extreme values of the mutation rate~$\gamma$, and then apply a perturbation argument to extend the result to the parameter regions~$\gamma > 0$ small and~$\gamma < \infty$ large. \\
\indent
 To begin with, let~$L$ be a large scale parameter to be fixed later and, for all~$(m, n) \in \lat_+$, we now consider the space-time blocks
 $$ \C_{m, n} = \{(m - 1) e_1, me_1, (m + 1) e_1 \} \times \{1, 2, \ldots, N \} \times [2nL, (2n + 2) L], $$
 where~$e_1$ again refers to the ﬁrst unit vector in $\Z^d$.
 This region is shown in  Figure~\ref{fig:spread2}.
\begin{figure}[t!]
\centering
\scalebox{0.40}{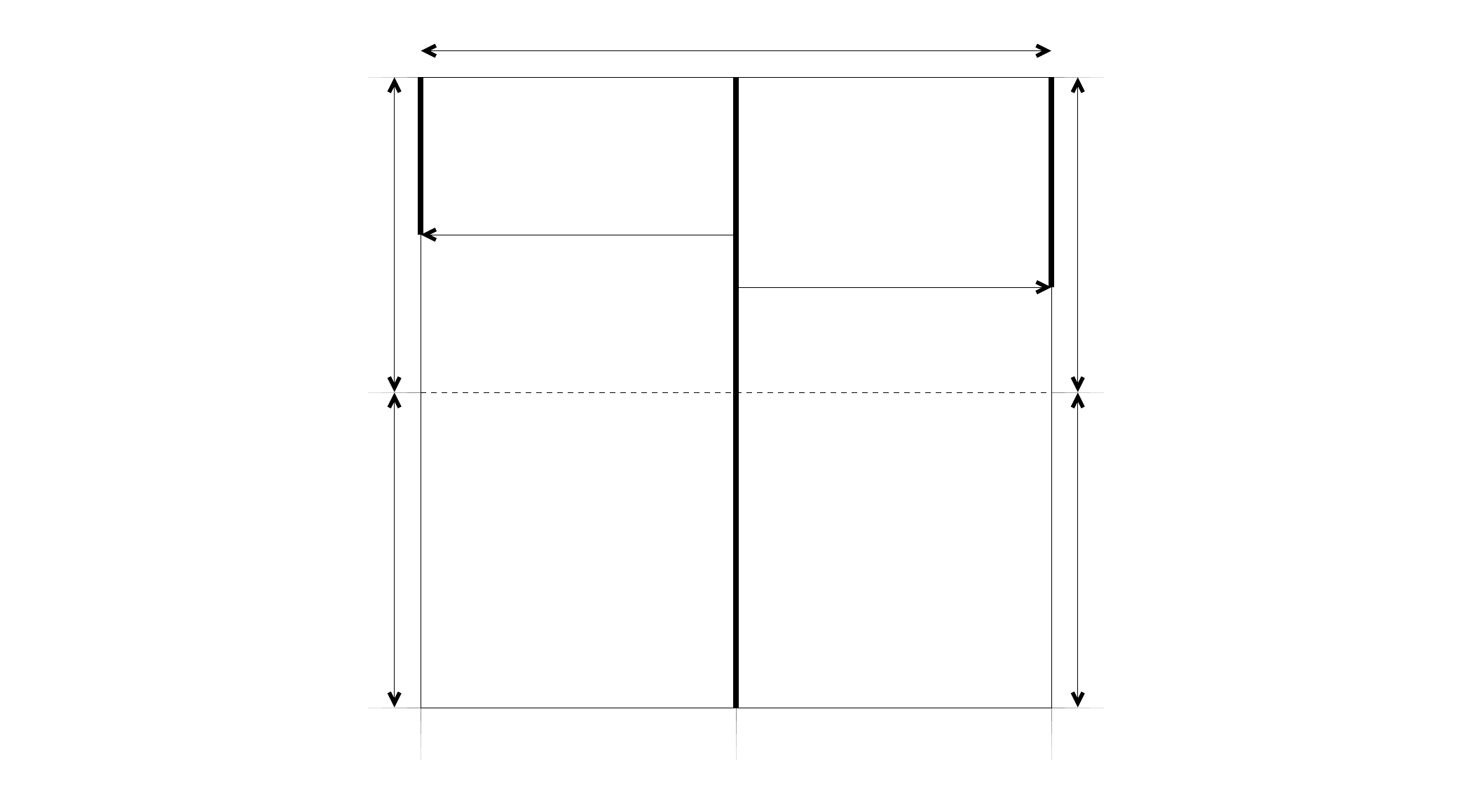}
\caption{\upshape{
  Picture of the blocks and events used in the proof of Theorem~\ref{th:philarge}.b when~$d = 1$.
  The bold lines indicate the space-time regions occupied by at least one~1 and no~2s.}}
\label{fig:spread2}
\end{figure}
 We first deal with the no mutation~$\gamma = 0$ case in which the~2s cannot be created.
 To proceed with the block construction, we consider the collection of good events
 $$ \begin{array}{rcl}
      E_{m, n} & \n = \n & \{\hbox{cluster~$m e_1$ contains at least one~1, but no~2s, at time~$2nL$} \} \vspace*{4pt} \\
               & \n = \n & \{\xi_{2nL} (m e_1, 1) = 1 \}. \end{array} $$
 We first define three good events that altogether guarantee that, whenever~$E_{m, n}$ occurs, the two events~$E_{m \pm 1, n + 1}$ occur as well. 
 As previously, the first step is to give space for the~1s to spread by getting rid of the~2s on both sides of~$m e_1$.
 More precisely, we prove that the event
 $$ \begin{array}{l}
      G_{m, n}^1 = \{\forall \,(x, i) \in (m \pm 1) e_1 \times \{1, 2, \ldots, N \}, \vspace*{4pt} \\
                   \hspace*{60pt} \exists \,t \in [2nL, (2n + 1) L) : \hbox{there is a~$\times$ at~$(x, i, t)$} \} \end{array} $$
 that there is at least one death mark at each site of the clusters~$(m \pm 1) e_1$ between time~$2nL$ and time~$(2n + 1) L$ occurs with probability close to one when~$L$ is large.
\begin{lemma}
\label{lem:philarge-clean}
 For all~$\ep > 0$,~$P (G_{m, n}^1) \geq 1 - \ep / 4$ for all~$L$ large.
\end{lemma}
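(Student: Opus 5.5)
We argue exactly as in the proof of Lemma~\ref{lem:lambdalarge-clean}, only with a much smaller spatial region. The event~$G_{m, n}^1$ requires a death mark~$\times$ at every site of the two clusters~$(m - 1) e_1$ and~$(m + 1) e_1$ during the time interval~$[2nL, (2n + 1) L)$. The relevant set of sites is
 $$ \{(m - 1) e_1, (m + 1) e_1 \} \times \{1, 2, \ldots, N \}, $$
 which has exactly~$2N$ elements, and this number does not depend on~$L$. The time interval has length~$L$.

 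In the graphical representation, the crosses at distinct sites come from independent Poisson processes with rate one. So the numbers of crosses falling in the time window at these~$2N$ sites are independent~$\poisson (L)$ random variables. It follows that
 $$ P (G_{m, n}^1) = \Big(P (\poisson (L) \neq 0) \Big)^{2N} = \big(1 - e^{-L} \big)^{2N} \geq 1 - 2N e^{-L}, $$
 where the last step is Bernoulli's inequality (or equivalently a union bound over the~$2N$ sites).

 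The cluster size~$N$ is fixed and does not depend on~$L$. Hence~$2N e^{-L} \leq \ep / 4$ as soon as~$L \geq \ln (8N / \ep)$, which proves the lemma for all~$L$ large.

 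There is no real obstacle here. The only point to check is that the number of sites involved does not grow with~$L$, and in this block construction it is constant. This makes the estimate even easier than in Lemma~\ref{lem:lambdalarge-clean}, where the number of sites grew polynomially in~$L$ but was still beaten by a factor~$e^{-L^2}$.
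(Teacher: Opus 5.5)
Your proof is correct and is the paper's argument: the paper also writes $P (G_{m, n}^1) = (P (\poisson (L) \neq 0))^{2N} = (1 - e^{-L})^{2N}$ using the independent rate-one death marks at the $2N$ sites of the clusters $(m \pm 1) e_1$, and your bound $1 - 2N e^{-L}$ simply makes the threshold on $L$ explicit. Your explicit bound also covers the later reuse of this lemma in Section~\ref{sec:Nlarge}, where $N = (p - q) L / 4$ grows with $L$, since $2N e^{-L} \to 0$ still holds there.
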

\begin{proof}
 Using the same approach as in the proof of Lemma~\ref{lem:lambdalarge-clean}, we have
 $$ P (G_{m, n}^1) = \Big(P (\poisson (L) \neq 0) \Big)^{2N} = \Big(1 - e^{-L} \Big)^{2N}, $$
 which goes to one as~$L$ goes to infinity.
\end{proof} \\ \\
 Our second good event imposes at least two~1-arrows from the first level of cluster~$m e_1$ pointing at the two clusters~$(m \pm 1) e_1$ during the next~$L$ units of time:
 $$ G_{m, n}^2 = \{\forall \,x = (m \pm 1) e_1, \exists \,t \in [(2n + 1) L, (2n + 2) L) : (m e_1, 1, t) \arrow{1} (x, 1, t) \}. $$
 This event again occurs with probability close to one when~$L$ is large.
\begin{lemma}
\label{lem:philarge-arrow}
 For all~$\lambda_1 > 0$ and~$\ep > 0$,~$P (G_{m, n}^2) \geq 1 - \ep / 4$ for all~$L$ large.
\end{lemma}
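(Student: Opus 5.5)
The event $G_{m, n}^2$ only involves the Poisson processes of type~1 arrows from $(m e_1, 1)$ to $((m - 1) e_1, 1)$ and to $((m + 1) e_1, 1)$, restricted to the time window $[(2n + 1) L, (2n + 2) L)$. The plan is to compute its probability exactly, in the same way as in the proofs of Lemmas~\ref{lem:lambdalarge-clean} and~\ref{lem:philarge-clean}.

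First, recall from the graphical representation in Section~\ref{sec:GR} that type~1 arrows along each oriented edge $(x, i) \to (y, 1)$ with $y \sim x$ come from independent Poisson processes with rate~$\lambda_1$. The two edges in question are distinct, so the numbers of arrows in the window are independent $\poisson (\lambda_1 L)$ random variables. The event $G_{m, n}^2$ is exactly the event that both counts are nonzero. Therefore
$$ P (G_{m, n}^2) = \Big(P (\poisson (\lambda_1 L) \neq 0) \Big)^2 = \Big(1 - e^{- \lambda_1 L} \Big)^2. $$

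Second, since $\lambda_1 > 0$, we have $e^{- \lambda_1 L} \to 0$ as $L \to \infty$, so the right-hand side tends to one. In particular, there is $L_0 = L_0 (\lambda_1, \ep)$ such that $(1 - e^{- \lambda_1 L})^2 \geq 1 - 2 e^{- \lambda_1 L} \geq 1 - \ep / 4$ for all $L \geq L_0$; it suffices to take $L_0 = \lambda_1^{-1} \ln (8 / \ep)$.

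I do not see a real obstacle here. The only point to check is that the arrows are defined along oriented edges and at level~1 with the correct type, so that the two Poisson processes are genuinely independent with rate $\lambda_1$. Note also that the bound is uniform in $\gamma$, $\phi_1$, $\phi_2$ and $N$, which matters for the later perturbation step.
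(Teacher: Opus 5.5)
Your proof is correct and matches the paper's argument: both use that the two arrow processes $(m e_1, 1) \arrow{1} ((m \pm 1) e_1, 1)$ are independent Poisson processes with rate $\lambda_1$ to get $P(G_{m,n}^2) = (1 - e^{-\lambda_1 L})^2$, which tends to one as $L \to \infty$. Your explicit threshold $L_0 = \lambda_1^{-1} \ln(8/\ep)$ is a harmless refinement that the paper does not state.
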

\begin{proof}
 Since~1-arrows~$(m e_1, 1) \arrow{1} ((m \pm 1) e_1, 1)$ occur at rate~$\lambda_1 > 0$,
 $$ P (G_{m, n}^2) = \Big(P (\poisson (\lambda_1 L) \neq 0) \Big)^2 = \Big(1 - e^{-\lambda_1 L} \Big)^2, $$
 which goes to one as~$L$ goes to infinity.
\end{proof} \\ \\
 Our third good event~$G_{m, n}^3$ is more complicated and consists of three components.
\begin{itemize}
\item
 There is at least one~\bul{1} at site~$(m e_1, 1)$ between time~$2nL$ and the first~$\times$ at cluster~$m e_1$, and between any two death marks~$\times$ at cluster~$m e_1$. \vspace*{4pt}
\item
 From time~$(2n + 1) L$ to time~$(2n + 2) L$, there is at least one~\bul{1} at site~$((m \pm 1) e_1, 1)$ between each~$(m e_1, 1) \arrow{1} ((m \pm 1) e_1, 1)$ and the next~$\times$ at~$(m \pm 1) e_1$. \vspace*{4pt}
\item
 From time~$(2n + 1) L$ to time~$(2n + 2) L$, there is at least one~\bul{1} at site~$((m \pm 1) e_1, 1)$ between any two death marks~$\times$ at cluster~$(m \pm 1) e_1$. \vspace*{4pt}
\end{itemize}
 This event guarantees that, starting with at least one~1 and no~2s at~$m e_1$ at time~$2nL$, this property remains true until time~$(2n + 2) L$.
 In addition, if there are no~2s at~$(m \pm 1) e_1$ at time~$(2n + 1) L$ and there are at least two~1-arrows~$(m e_1, 1) \arrow{1} ((m \pm 1) e_1, 1)$ between times~$(2n + 1) L$ and~$(2n + 2) L$, then there will be at least one~1 and no~2s at~$(m \pm 1) e_1$ from the times of these arrows until at least time~$(2n + 2) L$.
 In terms of events, this can be written as
\begin{equation}
\label{eq:philarge}
  E_{m, n} \cap G_{m, n}^1 \cap G_{m, n}^2 \cap G_{m, n}^3 \subset E_{m - 1, n + 1} \cap E_{m + 1, n + 1} \quad \hbox{when} \quad \gamma = 0.
\end{equation}
 The next lemma also shows that, the scale parameter~$L$ being fixed, the event~$G_{m, n}^3$ occurs with probability arbitrarily close to one when~$\phi_1$ is large.
\begin{lemma}
\label{lem:perturbphi}
 For all~$\ep > 0$, there exists~$\phi_+ < \infty$ such that
 $$ P(G_{m,n}^3) \geq 1 - \ep/4 \quad \hbox{for all} \quad \phi_1 > \phi_+. $$
\end{lemma}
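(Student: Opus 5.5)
The plan is to bound the probability that any of the three components of~$G_{m, n}^3$ fails. Each component is a statement about finitely many \emph{gaps}: time intervals delimited by consecutive~$\times$ marks, or by an arrow and the next~$\times$, at one of the three clusters~$(m - 1) e_1, m e_1, (m + 1) e_1$ during the time window~$[2nL, (2n + 2) L]$. For each gap we need at least one~\bul{1} at the first level of the relevant cluster. The scale parameter~$L$ is fixed before~$\phi_1$ is chosen, and the processes of~$\times$ marks and~$1$-arrows do not depend on~$\phi_1$. Hence it suffices to show two things: with high probability there are only boundedly many gaps, and each gap has length bounded below, uniformly in~$\phi_1$.

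The first step is to control the number of gaps. The~$\times$ marks at the~$3N$ sites of the three clusters over a window of length~$2L$ form a Poisson random variable~$X$ with mean~$6NL$. The relevant~$1$-arrows~$(m e_1, 1) \arrow{1} ((m \pm 1) e_1, 1)$ form a Poisson random variable~$X'$ with mean~$2 \lambda_1 L$. Neither depends on~$\phi_1$. Using a Markov bound as in the proof of Lemma~\ref{lem:perturb-2}, we pick~$K = K (\ep, L, N, \lambda_1)$ with~$P (X + X' > K) \leq \ep / 12$. On the complement, the number of relevant gaps is at most~$K + 3$.

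The second step, which is the main obstacle, is to make sure gaps are not too short. This has to be handled on an event that is independent of the~\bul{1} marks. We take~$\delta > 0$ small so that, with probability at least~$1 - \ep / 12$, any two distinct points in the superposition of the~$\times$ marks, the relevant arrows, and the endpoints~$2nL, (2n + 1) L$ are at least~$\delta$ apart. This is possible because, conditional on~$X + X' = k \leq K$, the points are uniform, and the probability that some pair lies within~$\delta$ is at most~$C k^2 \delta / L$, which tends to zero as~$\delta \to 0$. We use only gaps strictly inside the window. Gaps that run up to the end of the block do not need a bullet, because the third bullet item only concerns intervals between two death marks, and the first item ends at the first~$\times$.

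The third step is to condition on the~$\times$ and arrow configuration that satisfies both previous properties. The~\bul{1} marks at the three relevant first-level sites form independent Poisson processes with rate~$\phi_1$, independent of everything else. The probability that a fixed gap of length at least~$\delta$ contains no~\bul{1} is at most~$e^{- \phi_1 \delta}$. A union bound over at most~$K + 3$ gaps gives a failure probability of at most~$(K + 3) e^{- \phi_1 \delta}$, which is below~$\ep / 12$ once~$\phi_1 > \phi_+ = \delta^{-1} \ln (12 (K + 3) / \ep)$. Summing the three error terms yields~$P (G_{m, n}^3) \geq 1 - \ep / 4$.
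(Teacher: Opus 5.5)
Your proof is correct. Its skeleton matches the paper's: bound the number of relevant $\times$ marks and arrows in the three clusters with a Poisson tail estimate, then take a union bound over the resulting gaps. (The paper uses $Y = \poisson (6 (\lambda_1 + N) L)$ and the generating-function bound from Lemma~\ref{lem:perturb-2}.)

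Where you differ is the per-gap estimate. The paper copies the argument of Lemma~\ref{lem:perturb-2}. After each event, the next \bul{1} at rate~$\phi_1$ races against the next $\times$ or arrow. By memorylessness, each gap then fails with probability of order $r / (r + \phi_1)$, where $r$ is the total rate of the competing marks. No control on gap lengths is needed, so the step you identified as the main obstacle, the minimum-spacing event, does not arise there.

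You instead condition on the $\times$/arrow configuration, which is independent of the bullets. You obtain the uniform bound $e^{- \phi_1 \delta}$ at the cost of the extra spacing estimate, which is a small price. In exchange your version is cleaner on the probabilistic side. The paper bounds $P ((G_{m, n}^3)^c \,| \,Y \leq c \lambda)$ using unconditional race probabilities. Strictly speaking, that needs a strong Markov argument at the successive event times, with a union bound over the first $c \lambda$ indices. In your argument the gaps are deterministic once you condition, so the union bound is immediate.
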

\begin{proof}
 This is similar to the proof of Lemma~\ref{lem:perturb-2} so we briefly explain the difference between the two proofs.
 We consider three clusters for a time interval of length~$2L$.
 In addition, death marks occur at each site at rate one, while arrows~$(m e_1, 1) \arrow{1} ((m \pm 1) e_1, 1)$ occur at rate~$\lambda_1$.
 In particular, we replace the random variable~$Y$ in the proof of Lemma~\ref{lem:perturb-2} by
 $$ Y = \poisson (\lambda) \quad \hbox{where} \quad \lambda = 3 \times (\lambda_1 + N) \times 2L. $$
 Following the proof of Lemma~\ref{lem:perturb-2} using the cutoff~$\ep / 4$ instead of~$\ep$, we deduce that there exists a constant~$c = c (\ep, \lambda) < \infty$ such that
 $$   P (Y > c \lambda) \leq (2^c / e)^{- \lambda} \leq \ep/8, $$
 and that there exists~$\phi_+ < \infty$ such that
 $$ P ((G_{m, n}^3)^c) \leq P ((G_{m, n}^3)^c \,| \,Y \leq c \lambda) + P (Y > c \lambda) \leq c \lambda \times \ep / (8c \lambda) + \ep/8 = \ep/4 $$
 for all~$\phi_1 > \phi_+$.
 This completes the proof.
\end{proof} \\ \\
 We are now ready to prove the theorem. \\ \\
\begin{proofof}{Theorem~\ref{th:philarge}.b}
 We say that site~$(m, n) \in \lat_+$ is a good site if
 $$ E_{m, n} = \{\hbox{cluster~$m e_1$ contains at least one~1, but no~2s, at time~$2nL$} \} $$
 occurs.
 Take~$\ep < 3^{- 4 (2M + 1)^2}$ with~$M = 2$ since
 $$ \C_{m, n} \cap \C_{m', n'} = \varnothing \quad \hbox{whenever} \quad |m - m'| > 2 \ \ \hbox{or} \ \ |n - n'| > 1. $$
 As previously, to study the process with~$\gamma > 0$ small, we define
 $$ G_{m, n}^4 = \{\hbox{there are no~$\circ$ in~$\C_{m, n}$} \} \quad \hbox{and} \quad G_{m, n} = G_{m, n}^1 \cap G_{m, n}^2 \cap G_{m, n}^3 \cap G_{m, n}^4. $$
 It follows from~\eqref{eq:philarge} and the definition of~$G_{m, n}^4$ that
\begin{equation}
\label{eq:philarge-inclusion}
  E_{m, n} \cap G_{m, n} \subset E_{m - 1, n + 1} \cap E_{m + 1, n + 1}.
\end{equation}
 Now, according to Lemmas~\ref{lem:philarge-clean} and~\ref{lem:philarge-arrow}, there exists
 $$ L = L (\ep) < \infty \ \ \hbox{such that} \ \ P (G_{m, n}^i) \geq 1 - \ep / 4 \ \ \hbox{for} \ \ i = 1, 2. $$
 Once the scale parameter~$L$ is fixed so that the previous two inequalities hold, using Lemma~\ref{lem:perturbphi} as well as Lemma~\ref{lem:perturb-1} with~$\C_{m, n}$ instead of~$\B$, we can also find
 $$ \begin{array}{rclcl}
    \phi_+ = \phi_+ (\ep, L) < \infty & \hbox{such that} & P (G_{m, n}^3) \geq 1 - \ep / 4 & \hbox{for all} & \phi > \phi_+, \vspace*{4pt} \\
    \gamma_- = \gamma_- (\ep, L) > 0  & \hbox{such that} & P (G_{m, n}^4) \geq 1 - \ep / 4 & \hbox{for all} & \gamma < \gamma_-. \end{array} $$
 Using the union bound, we deduce that
\begin{equation}
\label{eq:philarge-proba}
\textstyle P (G_{m, n}) \geq 1 - \sum_{i = 1, 2, 3, 4} P ((G_{m, n}^i)^c) \geq 1 - \ep.
\end{equation}
 As previously, survival follows from~\cite[Theorem~4.3]{durrett_1995}, Lemma~\ref{lem:perco-1} and~\eqref{eq:philarge-inclusion}--\eqref{eq:philarge-proba}.
\end{proofof} \\ \\
\begin{proofof}{Theorem~\ref{th:philarge}.c} This follows the proof of Theorem~\ref{th:philarge}.b with the following exceptions.
\begin{enumerate}
\item
 Lemma~\ref{lem:lambdalarge-clean} still holds under the assumptions of the theorem but is no longer needed because the~1s turn instantaneously into~2s in the limiting case~$\gamma = \infty$. \vspace*{4pt}
\item
 The event~$G_{m, n}^2$ is defined similarly but uses the~$\arrow{2}$ instead of the~$\arrow{1}$. \vspace*{4pt}
\item
 The event~$G_{m,n}^3$ is defined similarly but uses the~\bul{2} instead of the~\bul{1}. \vspace*{4pt}
\item
 Because we need to do a perturbation at~$\gamma = \infty$ instead of~$\gamma = 0$, the event~$G_{m, n}^4$ is now the event that any potential internal or external infection in the block~$\C_{m ,n}$ is followed by a mutation before the next potential infection in the block.
 In particular, the probability of this event is now controlled by Lemma~\ref{lem:perturb-2} instead of Lemma~\ref{lem:perturb-1}.
\end{enumerate}
 This completes the proof of the theorem.
\end{proofof}


\section{Proof of Theorems~\ref{th:Nlarge}.b and~\ref{th:Nlarge}.c}
\label{sec:Nlarge}
 As previously, in the limiting cases~$\gamma = 0$ and~$\gamma = \infty$, the process behaves eventually as if there were only one type of infected individuals.
 In particular, the number of infected individuals in an infected cluster evolves essentially like an asymmetric random walk whose transition rates are controlled by the recovery rate and the relevant internal infection rate.
 Also, the proof relies on random walk estimates, in addition to a block construction and a perturbation argument.
 More precisely, when~$\gamma = 0$, we will prove that, if a cluster has at least~$K = \lfloor N/2 \rfloor$ asymptomatic individuals long enough, then the nearby clusters will reach~$N$ asymptomatic individuals and remain above~$K$ asymptomatic individuals for a long time.
 The same conclusion holds when~$\gamma = \infty$ looking instead at the symptomatic individuals.
 Finally, we will apply a perturbation argument to extend the result to the parameter regions~$\gamma > 0$ small and~$\gamma < \infty$ large. \\
\indent
 To set up the block construction, we again let~$L$ be a large scale parameter to be fixed later, and, for all sites~$(m, n) \in \lat_+$, we consider the space-time blocks
 $$ \C_{m, n} = \{(m - 1) e_1, me_1, (m + 1) e_1 \} \times \{1, 2, \ldots, N \} \times [2nL, (2n + 4) L], $$
 where~$e_1$ again refers to the ﬁrst unit vector in~$\Z^d$.
 This region is shown in Figure~\ref{fig:spread3}.
\begin{figure}[t!]
\centering
\scalebox{0.40}{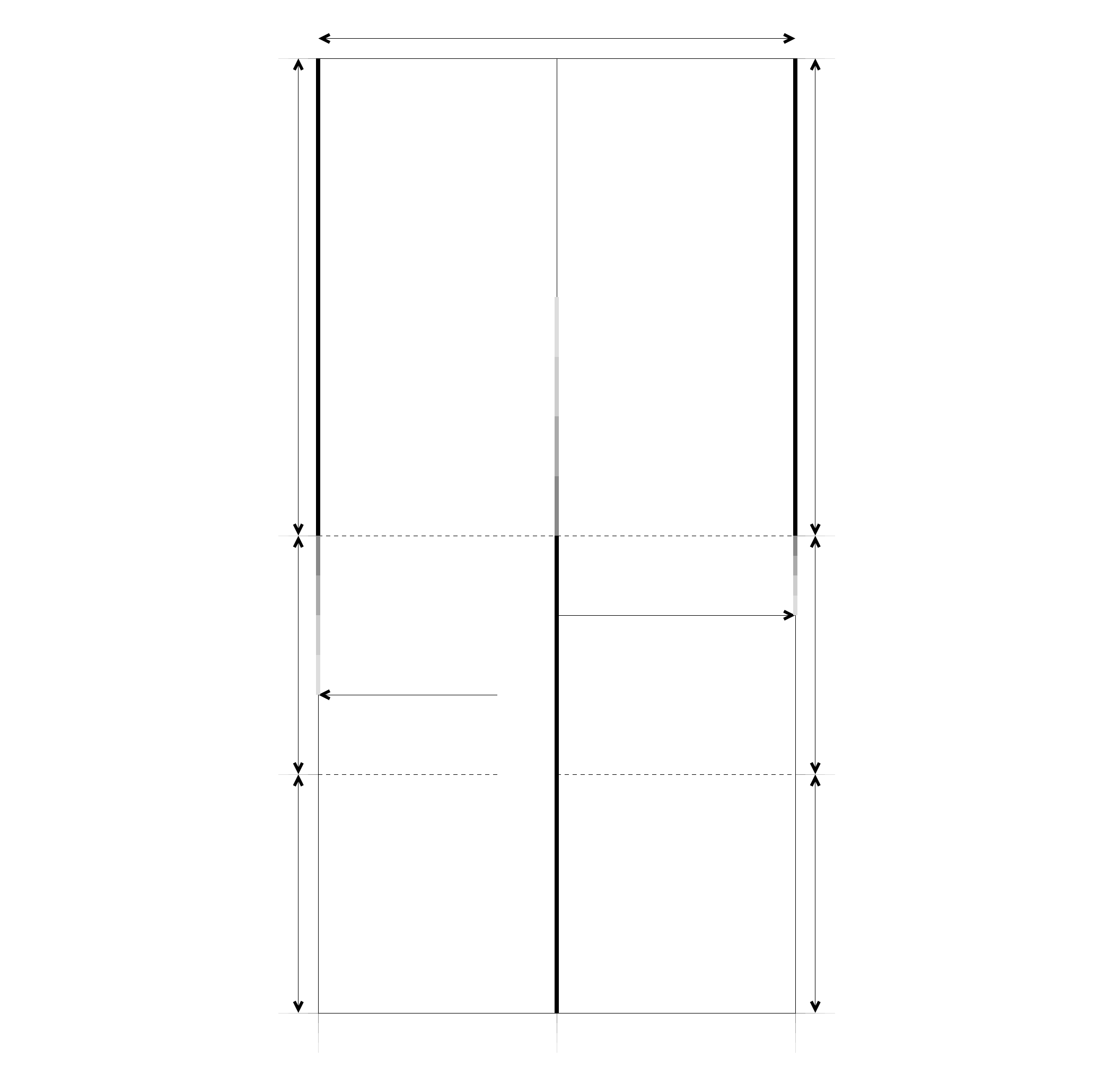}
\caption{\upshape{
 Picture of the blocks and events used in the proof of Theorem~\ref{th:Nlarge}.b when~$d = 1$.
 The bold black lines indicate the space-time regions occupied by at least~$K = \lfloor N/2 \rfloor$~1s and no~2s.}}
\label{fig:spread3}
\end{figure}
 As previously, we first deal with the no mutation~$\gamma = 0$ case in which the~2s cannot be created.
 To proceed with the block construction, we now consider the collection of good events
 $$ \begin{array}{rcl}
      E_{m, n} & \n = \n & \{\hbox{cluster~$m e_1$ contains at least $K$~1s, but no~2s, for all $t\in [2nL, (2n+2)L]$} \} \vspace*{4pt} \\
	           & \n = \n & \{\forall \,t \in [2nL, (2n + 2) L], \ \sum_i \ind \{\xi_{t} (m e_1, i) = 1 \} \geq K \vspace*{4pt} \\
	           &         & \hspace*{94pt} \hbox{and} \ \sum_i \ind \{\xi_{t} (m e_1, i) = 2 \} = 0 \}, \end{array} $$
 where~$K = \lfloor N/2 \rfloor$.
 We now construct four good events that occur with probability close to one, and that, altogether, guarantee that, when~$E_{m, n}$ occurs, the two events~$E_{m \pm 1, n + 1}$ occur as well.
 As in the proof of Theorem~\ref{th:philarge}.b, the first step is to create space for the~1s to spread by killing the~2s on both sides of $m e_1$.
 To this end, the first good event remains unchanged and is defined as
 $$ \begin{array}{l}
      G_{m, n}^1 = \{\forall \,(x, i) \in (m \pm 1) e_1 \times \{1, 2, \ldots, N \}, \vspace*{4pt} \\
                   \hspace*{60pt} \exists \,t \in [2nL, (2n + 1) L) : \hbox{there is a~$\times$ at~$(x, i, t)$} \}. \end{array} $$
 Recall from Lemma~\ref{lem:philarge-clean} that this event occurs with high probability when~$L$ is large.
 In addition, this event ensures that all the~2s initially present at~$(m \pm 1) e_1$ are killed by time~$(2n + 1) L$, so there are no~2s in these two clusters from this time onward when~$\gamma = 0$.
 Our next two good events are again measurable with respect to the graphical representation in the block~$\C_{m, n}$ but are defined implicitly from~$E_{m, n}$ and~$E_{m \pm 1, n + 1}$ and the following event describing the configuration:
 $$ F_{m, n} = \{\hbox{$(m \pm 1) e_1$ contain~$N$ 1s at some time~$t \in [(2n + 1) L, (2n + 2) L]$} \} $$
 To estimate later the conditional probability of these events, note that, in the absence of~2s and when a cluster contains~$i > 0$ asymptomatic individuals, the number of infections in the cluster increases at rate~$i \phi_1$ and decreases at rate~$i$.
 This motivates the introduction and the study of the discrete-time random walk~$X_n$ on the integers with transition probabilities
 $$ \begin{array}{rcl}
      P (X_{n + 1} = i + 1 \,| \,X_n = i) & \n = \n & p = \phi_1 / (\phi_1 + 1), \vspace*{4pt} \\
      P (X_{n + 1} = i - 1 \,| \,X_n = i) & \n = \n & q = 1 / (\phi_1 + 1). \end{array} $$
 For all~$i \in \Z$, let~$P_i$ be the law of the random walk starting from $X_0 = i$ and
 $$ \tau_i^X = \inf \{n \geq 0 : X_n = i \} $$
 denote the first hitting time.
 The random-walk estimates needed for the block construction are collected in the following three lemmas.
 The first lemma provides a large-deviation bound showing that, when starting from the origin, the probability of the hitting time~$\tau_N$ exceeding twice its expected value~$N / (p - q)$ decays exponentially fast with $N$.
\begin{lemma}
\label{lem:RW1}	
 For all~$p > q$ and all~$N$, we have
 $$ P_0 \bigg(\tau_N^X > \frac{2N}{p - q} \bigg) \leq e^{- \frac{(p - q)^3}{6} N} = e^{- \frac{1}{6} \big(\frac{\phi_1 - 1}{\phi_1 + 1} \big)^3 N}. $$
\end{lemma}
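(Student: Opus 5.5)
The plan is to reduce the tail of the hitting time to a lower-tail estimate for the position of the walk at a fixed time, and then apply a Chernoff/Hoeffding bound to a binomial random variable. Write $\mu = p - q = (\phi_1 - 1)/(\phi_1 + 1) \in (0, 1]$ and let $n = \lfloor 2N/\mu \rfloor$. Since $\tau_N^X$ is integer valued, $\{\tau_N^X > 2N/\mu\} = \{\tau_N^X > n\}$. The walk is nearest-neighbour and starts at $0 < N$, so it cannot be at or above level $N$ at time $n$ without having visited $N$ before. Hence
$$ P_0 (\tau_N^X > n) \leq P_0 (X_n < N) = P_0 (X_n \leq N - 1), $$
where the equality uses only that $X_n$ is an integer.

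Next I would express the walk through its number of upward steps. Write $X_n = 2U - n$ with $U \sim \mathrm{Binomial}(n, p)$. The event $X_n \leq N - 1$ is the event $U \leq (n + N - 1)/2$, so it is a deviation of $U$ below its mean $np$ of size
$$ t = np - \frac{n + N - 1}{2} = \frac{n\mu - N + 1}{2}. $$
Using $n \geq 2N/\mu - 1$ and $\mu \leq 1$, we get $n\mu \geq 2N - \mu \geq 2N - 1$, and therefore $t \geq N/2 > 0$. Hoeffding's inequality then gives
$$ P_0 (X_n \leq N - 1) \leq \exp(-2t^2/n) \leq \exp\Big(-\frac{N^2/2}{2N/\mu}\Big) = e^{-\mu N/4}. $$
In the last step I use $n \leq 2N/\mu$.

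Finally, since $0 < \mu \leq 1$ we have $\mu/4 \geq \mu^3/6$, because this reduces to $\mu^2 \leq 3/2$. This yields $e^{-\mu N/4} \leq e^{-\mu^3 N/6}$, which is the claimed bound; the second expression in the statement is just the substitution $\mu = (\phi_1 - 1)/(\phi_1 + 1)$. If $n = 0$ (which requires $N = 0$), the bound is trivial.

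The only delicate point is the rounding of $2N/\mu$ to an integer. A careless treatment loses a constant in the deviation size $t$, and that loss breaks the inequality for small $N$ when $\mu$ is close to one. The device that avoids this is to use the integrality of $X_n$ and write $X_n \leq N - 1$ instead of $X_n < N$; that extra unit is exactly what guarantees $t \geq N/2$ uniformly in $N \geq 1$ and $\mu \leq 1$. The rest is a direct application of Hoeffding's inequality, and there is room to spare in the exponent.
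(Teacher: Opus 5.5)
Your proof is correct. It shares the paper's first step but uses a different key estimate.

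Both arguments begin by reducing to $P_0(X_n<N)$. The paper then applies Markov's inequality to $s^{X_n}$ and uses the explicit generating function to get $P_0(X_n<N)\le s^{-N}(ps+q/s)^n$. It chooses the tilt $s=\sqrt{q/p}$, so that $ps+q/s=2\sqrt{pq}$. A Taylor expansion in $x=p-q$ then shows that $a=\sqrt{p/q}\,(2\sqrt{pq})^{2/x}$ satisfies $\ln a=\sum_{k\ge 2}\big(\frac{1}{2k-1}-\frac{1}{k}\big)x^{2k-1}\le -x^3/6$. You instead count up-steps as a $\mathrm{Binomial}(n,p)$ variable and apply Hoeffding's inequality directly.

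The paper's route uses only Markov's inequality and a series computation. However, the tilt $\sqrt{q/p}$ is far from optimal when the drift is small, which is why only the cubic exponent $(p-q)^3N/6$ comes out. Your route gives the linear exponent $(p-q)N/4$. This is the right order for small drift: at time $2N/(p-q)$ the walk must deviate by $N$ against a variance of about $2N/(p-q)$. You then recover the stated bound with room to spare. The stronger exponent is not needed downstream, since Lemma~\ref{lem:Nlarge-expand} only uses decay in $N$ at fixed $\phi_1$.

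You are also more careful than the paper about rounding. The paper takes $n=2N/(p-q)$ as if it were an integer. Since $2\sqrt{pq}<1$, replacing it by $\lfloor 2N/(p-q)\rfloor$ in the paper's bound costs a factor $(2\sqrt{pq})^{-1}$. Your device of bounding $P_0(X_n\le N-1)$ repairs the paper's argument too. It gains a factor $s=\sqrt{q/p}$, leaving a net factor $\sqrt{q/p}/(2\sqrt{pq})=1/(2p)<1$.
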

\begin{proof}
 For all~$s < 1$ and~$n > 0$, by Markov's inequality,
 $$ \begin{array}{rcl}
      P_0 (\tau_N^X > n) & \n \leq \n & P_0 (X_n < N) = P_0 (s^{X_n} > s^N) \leq s^{-N} E_0 (s^{X_n}) \vspace*{4pt} \\
                         & \n = \n & s^{-N} E (s^{X_1}) \,E (s^{X_2 - X_1}) \,\cdots \,E (s^{X_n - X_{n - 1}}) = s^{-N} (ps + qs^{-1})^n. \end{array} $$
 Taking~$n = 2N / (p - q) > 0$, this becomes
 $$ P_0 \bigg(\tau_N^X > \frac{2N}{p - q} \bigg) \leq \bigg(\frac{1}{s} \bigg)^N \bigg(ps + \frac{q}{s} \bigg)^{\frac{2N}{p - q}} =
        \bigg(\frac{1}{s} \bigg(ps + \frac{q}{s} \bigg)^{\frac{2}{p - q}} \bigg)^N = (a (s))^N, $$
 then taking~$s = \sqrt{q/p} < 1$, we get
 $$ P_0 \bigg(\tau_N^X > \frac{2N}{p - q} \bigg) \leq \bigg(\sqrt{\frac{p}{q}} \,\bigg(p \sqrt{\frac{q}{p}} + q \sqrt{\frac{p}{q}} \bigg)^{\frac{2}{p - q}} \bigg)^N =
        \bigg(\sqrt{\frac{p}{q}} \,\Big(2 \sqrt{pq} \Big)^{\frac{2}{p - q}} \bigg)^N = a^N. $$
 Letting~$x = p - q > 0$ and taking the logarithm, we obtain
 $$ \ln (a) = \ln \bigg(\sqrt{\frac{1 + x}{1 - x}} \,\big(1 - x^2 \big)^{\frac{1}{x}} \bigg) = \frac{\ln (1 + x)}{2} - \frac{\ln (1 - x)}{2} + \frac{\ln (1 - x^2)}{x}. $$
 Finally, using Taylor expansions,
 $$ \begin{array}{rcl}
    \ln (a) & \n = \n &
    \displaystyle - \frac{1}{2} \,\sum_{n = 1}^{\infty} \frac{(-x)^n}{n}
                  + \frac{1}{2} \,\sum_{n = 1}^{\infty} \frac{x^n}{n}
                  - \frac{1}{x} \,\sum_{n = 1}^{\infty} \frac{x^{2n}}{n} \vspace*{8pt} \\ & \n = \n &
    \displaystyle \sum_{n = 1}^{\infty} \frac{x^{2n - 1}}{2n - 1} - \sum_{n = 1}^{\infty} \frac{x^{2n - 1}}{n} =
	\sum_{n = 2}^{\infty} \bigg(\frac{1}{2n - 1} - \frac{1}{n} \bigg) x^{2n - 1} \leq - \frac{x^3}{6}, \end{array} $$
 which proves that~$a \leq e^{- \frac{(p - q)^3}{6}} < 1$.
\end{proof} \\ \\
 Our second lemma controls the probability of a large deviation against the drift.
 Since~$p > q$, the random walk has a positive drift.
 Starting at~$N - 1$, it is exponentially more likely to reach the target~$N$ quickly than to backtrack all the way down to~$K = \lfloor N/2 \rfloor$.
\begin{lemma}
\label{lem:RW2}
 Let~$K = \lfloor N/2 \rfloor$. Then,
 $$ P_{N - 1} (\tau_K^X < \tau_N^N) = \frac{\phi_1^{- (N - 1)} - \phi_1^{-N}}{\phi_1^{-K} - \phi_1^{-N}} \leq \phi_1^{- (N - K - 1)}. $$
\end{lemma}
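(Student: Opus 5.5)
This is the gambler's ruin for the biased walk $X_n$; note that $\tau_N^N$ in the statement should read $\tau_N^X$. The plan is to compute the exit probability from the interval $\{K, K+1, \ldots, N\}$ exactly using a harmonic function, and then bound the resulting ratio.

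\textbf{Step 1: a martingale.} With $r = q/p = 1/\phi_1$, the process $r^{X_n}$ is a martingale. Indeed,
$$ E(r^{X_{n+1}} \,|\, X_n = i) = r^i (pr + q r^{-1}) = r^i (q + p) = r^i. $$
Let $\tau = \tau_K^X \wedge \tau_N^X$. Starting from $N - 1 \in (K, N)$, the stopping time $\tau$ is almost surely finite, since the walk exits a bounded interval with probability one. The stopped martingale takes values between $r^N$ and $r^K$, so it is bounded.

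\textbf{Step 2: optional stopping.} Writing $h = P_{N-1}(\tau_K^X < \tau_N^X)$, optional stopping gives
$$ r^{N-1} = h\, r^K + (1-h)\, r^N, \qquad\text{so}\qquad h = \frac{r^{N-1} - r^N}{r^K - r^N} = \frac{\phi_1^{-(N-1)} - \phi_1^{-N}}{\phi_1^{-K} - \phi_1^{-N}}. $$
This is the claimed identity. We assume $\phi_1 > 1$, as in Theorem~\ref{th:Nlarge}. When $\phi_1 = 1$ the formula degenerates, and the bound holds trivially because its right-hand side equals $1$.

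\textbf{Step 3: the inequality.} Multiply the numerator and denominator by $\phi_1^N$ to get
$$ h = \frac{\phi_1 - 1}{\phi_1^{N-K} - 1}. $$
It then suffices to show $(\phi_1 - 1)\,\phi_1^{N-K-1} \le \phi_1^{N-K} - 1$, which rearranges to $\phi_1^{N-K-1} \ge 1$. This holds because $\phi_1 > 1$ and $N - K - 1 \ge 0$; the latter is true whenever $N \ge 1$, since $K = \lfloor N/2 \rfloor \le N - 1$.

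None of these steps is a real obstacle. The only points needing care are justifying optional stopping, which follows from boundedness of the stopped martingale together with $\tau < \infty$ almost surely, and keeping track of the sign in the final rearrangement.
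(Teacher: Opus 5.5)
Your proof is correct and follows the paper's argument: the same martingale $\phi_1^{-X_n}$ (your $r^{X_n}$ with $r = 1/\phi_1$), with optional stopping at $\tau_K^X \wedge \tau_N^X$ from $N-1$, gives the identical identity. You also verify the final inequality through $h = (\phi_1 - 1)/(\phi_1^{N-K} - 1)$ and the reduction to $\phi_1^{N-K-1} \geq 1$, which the paper leaves to ``rearranging''; one small correction is that for $N \le 2$ the starting point $N-1$ equals $K$ and does not lie in $(K,N)$, but then $h = 1$ and both the identity and the bound hold trivially.
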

\begin{proof}
 Note first that~$Y_n = \phi_1^{- X_n}$ is a martingale. Indeed,
 $$ \begin{array}{rcl}
      E (Y_{n + 1} \,| \,Y_n) & \n = \n & E (\phi_1^{- X_{n + 1}} \,| \,X_n) = p \phi_1^{- (X_n + 1)} + q \phi_1^{- (X_n - 1)} \vspace*{4pt} \\
                              & \n = \n & (p \phi_1^{-1} + q \phi_1) \phi_1^{- X_n} = (p (q/p) + q (p/q)) \phi_1^{- X_n} = \phi_1^{- X_n} = Y_n. \end{array} $$
 Starting at~$X_0 = N - 1$, the stopping time~$T = \tau_K \wedge \tau_N$ is almost surely finite, and the process stopped at time~$T$ is bounded, so the optional stopping theorem implies that
 $$ \begin{array}{rcl}
    \phi_1^{- (N - 1)} & \n = \n & E_{N - 1} (\phi_1^{- X_0}) = E_{N - 1} (\phi_1^{- X_T}) \vspace*{4pt} \\
                       & \n = \n & \phi_1^{-K} P_{N - 1} (X_T = K) + \phi_1^{-N} P_{N - 1} (X_T = N) \vspace*{4pt} \\
                       & \n = \n & \phi_1^{-K} P_{N - 1} (\tau_K^X < \tau_N^X) + \phi_1^{-N} P_{N - 1} (\tau_N^X < \tau_K^X) \vspace*{4pt} \\
                       & \n = \n & \phi_1^{-N} + (\phi_1^{-K} - \phi_1^{-N}) P_{N - 1} (\tau_K^X < \tau_N^X). \end{array} $$
 Rearranging the terms gives the desired equation.
\end{proof} \\ \\
 The next lemma is concerned with the continuous-time process~$Z_t$ that describes the number of asymptomatic individuals at cluster~$(m + 1) e_1$, defined as
 $$ \textstyle Z_t = \sum_i \ind \{\xi_t ((m + 1) e_1, i) = 1 \}. $$
 In the absence of symptomatic individuals, and when there are at least~$K$ asymptomatic individuals at~$m e_1$, the process~$Z_t$ evolves according to the transition rates
 $$ \begin{array}{rclclrclcll}
      0 & \n \to \n & 1     & \hbox{at rate} & \hbox{at least} \ K \lambda_1, & \quad
      i & \n \to \n & i + 1 & \hbox{at rate} & i \phi_1                       & \hbox{for} \ i < N, \vspace*{4pt} \\ &&&&&
      i & \n \to \n & i - 1 & \hbox{at rate} & i                              & \hbox{for} \ i \leq N. \end{array} $$
 Our third lemma gives exponential upper and lower bounds for the time~$T (n)$ it takes~$Z_t$ to perform~$n$ jumps, which is used after to compare the process with~$X_n$.
\begin{lemma}
\label{lem:RW3}
 Assume that~$\phi_1, K \lambda_1 > 1$ and let~$\Lambda = \phi_1 \vee \lambda_1 / 2$. Then,
 $$ P (T (n) \geq 2n) \leq (2/e)^n \quad \hbox{and} \quad P (T (n) \leq n / (2N \Lambda)) \leq (\sqrt{e} / 2)^n. $$
\end{lemma}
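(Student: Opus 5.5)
We view $T(n)=\sum_{k=1}^n \tau_k$ as a sum of successive holding times of the continuous-time chain $Z_t$. Conditional on the embedded jump chain, the $\tau_k$ are independent exponentials whose rates are the total jump rates $r(Z)$ of the states visited. The plan is to sandwich every possible total rate between two constants, $\underline r \le r(i) \le \bar r$. Then, by the strong Markov property and the standard coupling of exponentials, $T(n)$ is stochastically between $S_n/\bar r$ and $S_n/\underline r$, where $S_n$ is a sum of $n$ i.i.d.\ $\hbox{Exponential}(1)$ variables. Both inequalities of the lemma then follow from Chernoff bounds for $S_n$, using $E(e^{\theta S_n})=(1-\theta)^{-n}$ for $\theta<1$.

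\textbf{Lower bound on rates.} At state $0$ the rate is at least $K\lambda_1>1$. At $1\le i\le N-1$ it is $i(1+\phi_1)>2$. At $i=N$ it is $N\ge 2$, since $K\ge 1$ forces $N\ge 2$. Hence $\underline r=1$ works, and $T(n)\le S_n$ stochastically. Markov's inequality with $\theta=1/2$ gives
$$ P(T(n)\ge 2n)\le P(S_n\ge 2n)\le e^{-2n\theta}(1-\theta)^{-n}=e^{-n}2^n=(2/e)^n, $$
which is the first claim.

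\textbf{Upper bound on rates.} For the second claim I would write $P(T(n)\le x)\le P(S_n\le \bar r x)\le e^{\theta\bar r x}(1+\theta)^{-n}$ and take $\theta=1$. To reach exactly $(\sqrt e/2)^n$ at $x=n/(2N\Lambda)$ we need $\bar r x\le n/2$, that is, every total rate must be at most $N\Lambda$. At state $0$ only the cluster $me_1$ is relevant to the block construction, and there are no 2s. So the incoming infection rate is $\lambda_1$ times the number of 1s in that cluster, at most $N\lambda_1$. In the positive states the up-rate is $i\phi_1\le N\phi_1$, so the upward moves alone respect a bound of order $N\Lambda$.

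\textbf{Main obstacle.} The difficulty is the recovery part $i$ of the rate, which pushes the naive total up to $N(1+\phi_1)\le 2N\phi_1$. That naive total only yields the threshold $n/(4N\Lambda)$, and that weaker threshold is not what is claimed. My first attempt will be to avoid bounding the total rate by a single constant and instead split the jumps into up-jumps and down-jumps. Each jump from a positive state is an up-jump independently with probability $p=\phi_1/(1+\phi_1)>1/2$. I would then compare $T(n)$ with the time spent along the up-jumps, each exponential with rate at most $N\Lambda$. This requires a large-deviation estimate showing that at least roughly half of the $n$ jumps are up-jumps, absorbed into the $(\sqrt e/2)^n$ factor, for instance by adjusting $\theta$ in the Chernoff step.

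If this refinement cannot be made to close exactly at the constant $\sqrt e/2$, the fallback is to check how the lemma is used later. In the block construction any bound of the form $P(T(n)\le n/(CN\Lambda))\le \rho^n$ with some $\rho<1$ appears to suffice, because only exponential decay in $n$ enters. The constant $C$ would then need to be tracked there.
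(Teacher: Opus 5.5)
Your proof of the first inequality is the same as the paper's. Every total jump rate is at least one, so $T(n)$ is dominated by a sum of $n$ unit exponentials, and Markov's inequality with $\theta=1/2$ gives $(2/e)^n$. For the second inequality the paper takes exactly the route you reject: it asserts that $Z_t$ jumps at rate at most $N\Lambda$ and applies the Chernoff bound with $t=-N\Lambda$. Your objection is correct, and the flaw is in the paper's proof, not in your plan. For $1\le i\le N-1$ the total rate is $i(1+\phi_1)$. When $\lambda_1\le 2\phi_1$, so that $N\Lambda=N\phi_1$, this rate at $i=N-1$ exceeds $N\Lambda$ as soon as $N-1>\phi_1$, which is precisely the regime where the lemma is used. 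When $\lambda_1>2\phi_1$, the rate at $0$ can also exceed $N\Lambda=N\lambda_1/2$.

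Your proposed refinement cannot recover the constant. The holding time in state $i$ is exponential with the full rate $i(1+\phi_1)$ whichever way the next jump goes, because the minimum of competing clocks is independent of which one rings. So isolating up-jumps produces no slower clocks. Moreover, starting from $N$ as in Lemma~\ref{lem:Nlarge-persist}, at most $n/2$ of the $n$ jumps are up-jumps. Comparing them with a Poisson process of rate $N\phi_1$ run for time $n/(2N\phi_1)$, whose mean is $n/2$, therefore involves no large deviation at all.

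In fact the second inequality is false as stated. Take $\lambda_1=1$, $Z_0=N$, $n$ large and $N\gg n$. The embedded walk spends a fraction about $(\phi_1-1)/(2\phi_1)$ of its steps at $N$, where the mean holding time is $1/N$. It spends the rest just below $N$, where the mean holding time is about $1/(N(1+\phi_1))$. Hence $E\,T(n)\approx n/(2N)$, only a factor $\phi_1$ above the threshold $n/(2N\phi_1)$. For $\phi_1=1+\delta$ with $\delta$ small, $P(T(n)\le n/(2N\Lambda))$ is at least of order $e^{-c\delta^2 n}$. This is far larger than $(\sqrt{e}/2)^n=e^{-(\ln 2-1/2)n}$.

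Your fallback is therefore the correct fix. In Lemma~\ref{lem:Nlarge-persist} only jumps made from states above $K$ matter, so state $0$ never enters; this matters because its rate involves all $2d$ neighboring clusters. Every relevant rate is then at most $N(1+\phi_1)\le 2N\phi_1\le 2N\Lambda$. Your Chernoff step gives $P(T(n)\le n/(4N\Lambda))\le(\sqrt{e}/2)^n$, so the lemma holds with $\Lambda$ replaced by $2\Lambda$. One then takes $n=12\Lambda NL$ instead of $6\Lambda NL$ in Lemma~\ref{lem:Nlarge-persist}. The larger prefactor in $12\Lambda NL\,\phi_1^{-(N-K-1)}$ is harmless, since $N=(p-q)L/4$ grows linearly in $L$ and the $\phi_1$ factor decays exponentially.
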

\begin{proof}
 The moment-generating function of the sum~$S (n, \mu) = S_1 + \cdots + S_n$ of~$n$ independent exponential random variables with parameter~$\mu$ is given by
 $$ \begin{array}{rcl}
      M_{n, \mu} (t) & \n = \n & \displaystyle E \Big(e^{t S (n, \mu)} \Big) = \Big(E \big(e^{t S_i} \big) \Big)^n =
                                 \bigg(\int_0^{\infty} e^{st} (\mu e^{- \mu s}) \,ds \bigg)^n \vspace*{8pt} \\ & \n = \n &
                                 \displaystyle \bigg(\mu \bigg[- \frac{e^{- (\mu - t) s}}{\mu - t} \bigg]_0^{\infty} \bigg)^n =
                                 \bigg(\frac{\mu}{\mu - t} \bigg)^n \quad \hbox{for all} \quad t < \mu. \end{array} $$
 Now, because~$Z_t$ jumps at rate at least one, time~$T (n)$ is stochastically smaller than~$S (n, 1)$.
 In particular, taking~$t = 1/2$, and using Markov's inequality, we get
 $$ \begin{array}{rcl}
      P (T (n) \geq 2n) & \n \leq \n & P (S (n, 1) \geq 2n) = P (e^{S (n, 1) / 2} \geq e^n) \vspace*{4pt} \\
                        & \n \leq \n & e^{-n} E (e^{S (n, 1) / 2}) = e^{-n} M_{n, 1} (1/2) = (2/e)^n. \end{array} $$
 Similarly, because~$Z_t$ jumps at rate at most~$N \Lambda$, time~$T (n)$ is stochastically larger than~$S (n, N \Lambda)$, therefore, taking~$t = - N \Lambda$ and using Markov's inequality, we get
 $$ \begin{array}{l}
      P (T (n) \leq n / (2N \Lambda)) \leq P (S (n, N \Lambda) \leq n / (2N \Lambda)) \vspace*{6pt} \\ \hspace*{40pt} =
      P (e^{- N \Lambda S (n, N \Lambda)} \geq e^{- n/2}) \leq e^{n/2} E (e^{- N \Lambda S (n, N \Lambda)}) \vspace*{4pt} \\ \hspace*{40pt} =
      e^{n/2} M_{n, N \Lambda} (- N \Lambda) = (\sqrt{e} / 2)^n. \end{array} $$
 This completes the proof.
\end{proof} \\ \\
 Using the random-walk estimates collected in the previous three lemmas, we can now define implicitly our next two good events from~$E_{m, n}$ and~$F_{m, n}$.
 From now on, we set
\begin{equation}
\label{eq:LN}
  L = 4N / (p - q) \quad \Longleftrightarrow \quad N = (p - q) L/4,
\end{equation}
 so taking~$L$ large is equivalent to taking~$N$ large.
 We first prove that having~$K$ 1s in cluster~$me_1$~(event~$E_{m, n}$) results in~$N$ 1s on both sides of~$me_1$ in less than~$L$ units of time~(event~$F_{m, n}$) with probability close to one when~$L$ is large.
\begin{lemma}
\label{lem:Nlarge-expand}
 Let~$\gamma = 0$ and~$\phi_1 > 1$ and~\eqref{eq:LN}.
 Then, for all~$\ep > 0$, there is an event~$G_{m, n}^2$ measurable with respect to the graphical representation in~$\C_{m, n}$ such that
 $$ E_{m, n} \cap G_{m, n}^1 \cap G_{m, n}^2 \subset F_{m, n} \quad \hbox{and} \quad P (G_{m, n}^2) \geq 1 - \ep / 4 \ \hbox{for all~$L$ large}. $$
\end{lemma}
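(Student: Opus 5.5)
We focus on the cluster $(m + 1) e_1$; the cluster $(m - 1) e_1$ is handled identically, and a union bound then costs a factor two in the error probability. On $E_{m, n} \cap G_{m, n}^1$ with $\gamma = 0$, cluster $m e_1$ carries at least $K$ 1s and no 2s throughout $[2nL, (2n + 2) L]$. Cluster $(m + 1) e_1$ is free of 2s from time $(2n + 1) L$ onward. Hence on $[(2n + 1) L, (2n + 2) L]$ the count $Z_t$ follows the transition rates displayed before Lemma~\ref{lem:RW3}. We let $G_{m, n}^2$ be the intersection of the following graphical-representation events inside $\C_{m, n}$.

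\emph{(i) Reinfection.} Whenever $Z_t = 0$, an external infection arrives quickly. We fix one level of $m e_1$ that carries a 1; since at least $K$ levels do, we may simply require that the 1-arrows $(m e_1, i) \arrow{1} ((m + 1) e_1, 1)$ from all levels occur at combined rate $K \lambda_1 > 1$. The waiting time after each visit to $0$ is then dominated by an exponential with rate at least one, and it is absorbed into the jump-count bound of Lemma~\ref{lem:RW3}.

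\emph{(ii) Embedded walk.} The embedded jump chain of $Z$ coincides with the walk $X_n$ of Lemma~\ref{lem:RW1} while $0 < Z < N$, with $p = \phi_1/(\phi_1 + 1) > q$. At $Z = 0$ it jumps up deterministically, so it dominates $X$ there. We couple the jump chain with $X$ through the Poisson clocks so that the jump chain stays at least as large as $X$ until the chain first reaches $N$.

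\emph{(iii) Jump count and time.} We require that the chain reaches $N$ within $n_0 = 2N/(p - q)$ jumps, and that $T(n_0) < 2 n_0 = 4N/(p - q) = L$ by~\eqref{eq:LN}. These are exactly the bounds provided by Lemma~\ref{lem:RW1} and the first inequality of Lemma~\ref{lem:RW3}. The first probability is at most $e^{-(p - q)^3 N/6}$. Since $T(n_0) \geq 2 n_0$ corresponds to $T(n_0) \geq L$, its probability is at most $(2/e)^{n_0}$. Both tend to zero as $L$, equivalently $N$, tends to infinity.

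To make $G_{m, n}^2$ measurable with respect to the graphics, we define it from the marks in $\C_{m, n}$ directly. This is possible because the dynamics of $Z$ starting from $(2n + 1) L$ is a function of the crosses and bullets at $(m + 1) e_1$ and of the arrows from $m e_1$, given the conditioning on $E_{m, n} \cap G_{m, n}^1$. Concretely, we say $G_{m, n}^2$ holds if, for every admissible initial count and every admissible set of infected levels of $m e_1$, the resulting trajectory hits $N$ before time $(2n + 2) L$. Alternatively, we can use a monotone worst-case version, for instance starting from $Z = 0$ and using only arrows from levels $1, \ldots, K$ under the vertical convention. Attractiveness of the one-type IRP then gives $E_{m, n} \cap G_{m, n}^1 \cap G_{m, n}^2 \subset F_{m, n}$.

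The main obstacle is reconciling the worst-case, graphics-measurable definition with the random-walk bounds. The lemmas are stated for a fixed Markov chain, whereas the event must hold uniformly over initial states. I would handle this by monotonicity. The worst case starts from $Z = 0$ with exactly the lowest $K$ levels of $m e_1$ infected. Lemma~\ref{lem:irpa} with $\gamma = 0$ shows that more 1s can only help. I would then apply Lemmas~\ref{lem:RW1} and~\ref{lem:RW3} to this single worst-case chain, obtaining $P(G_{m, n}^2) \geq 1 - \ep/4$ for all $L$ large.
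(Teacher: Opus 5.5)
Your proposal is correct and follows the paper's proof. You dominate the embedded jump chain of $Z$ by the walk $X$, use Lemma~\ref{lem:RW1} to reach $N$ within $L/2 = 2N/(p-q)$ jumps and the first bound of Lemma~\ref{lem:RW3} to make those jumps within $L$ units of time, and then combine the two neighbors $(m \pm 1) e_1$; your union bound is equivalent to the paper's squaring. Your explicit worst-case definition of $G_{m, n}^2$ (start from $Z = 0$, use only the 1-arrows from levels $1, \ldots, K$ of $m e_1$, then apply monotonicity of the single-type dynamics) justifies the measurability of $G_{m, n}^2$ with respect to the graphics in $\C_{m, n}$ more cleanly than the paper, which only asserts it.
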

\begin{proof}
 Letting~$\tau_N^Z = \inf \{t \geq 0 : Z_t = N \}$, and using symmetry, we get
\begin{equation}
\label{eq:Nlarge-expand-1}
  P (F_{m, n} \,| \,E_{m, n} \cap G_{m, n}^1) \geq (P (\tau_N^Z \leq L \,| \,Z_0 = 0))^2 = (P_0 (\tau_N^Z \leq L))^2
\end{equation}
 regardless of the graphical representation outside~$\C_{m, n}$.
 Now, because~$Z_t$ has reflecting boundaries at the origin and~$N$, its embedded Markov chain dominates stochastically the discrete-time random walk~$X_n$ until time~$\tau_N^X$, so it follows from Lemmas~\ref{lem:RW1} and~\ref{lem:RW3} that
\begin{equation}
\label{eq:Nlarge-expand-2}
\begin{array}{rcl}
  P_0 (\tau_N^Z > L) & \n = \n & P_0 (\tau_N^Z > L, \tau_N^X > L/2) + P_0 (\tau_N^Z > L, \tau_N^X \leq L/2) \vspace*{4pt} \\
                     & \n \leq \n & P_0 (\tau_N^X > L/2) + P_0 (T (L/2) > L) \vspace*{4pt} \\
                     & \n \leq \n & \exp (- (p - q)^3 N / 6) + (2/e)^{L/2} \vspace*{4pt} \\
                     & \n = \n & \exp (- (p - q)^4 L / 24) + (2/e)^{L/2} \leq \ep/8 \end{array}
\end{equation}
 for all~$L$ sufficiently large.
 The first inequality above follows from the fact that, if~$Z_t$ takes more than~$L$ units of time but less than~$L/2$ jumps to reach the target~$N$, then the time to perform~$L/2$ jumps must be more than~$L$.
 Combining~\eqref{eq:Nlarge-expand-1}--\eqref{eq:Nlarge-expand-2}, we conclude that
 $$ P (F_{m, n} \,| \,E_{m, n} \cap G_{m, n}^1) \geq (1 - P_0 (\tau_N^Z > L))^2 \geq (1 - \ep/8)^2 \geq 1 - \ep/4, $$
 which shows the existence of~$G_{m, n}^2$ that satisfies the lemma.
 Note that, because the estimates above are true regardless of the graphical representation outside~$\C_{m, n}$, the event~$G_{m, n}^2$ can indeed be made measurable with respect to the graphical representation in the block.
\end{proof} \\ \\
 Next, we prove that having~$N$ 1s on both sides of~$me_1$~(event~$F_{m, n}$) results in at least~$K$ 1s during the next~$3L$ units of time~(events~$E_{m \pm 1, n + 1}$) with high probability for~$L$ large.
\begin{lemma}
\label{lem:Nlarge-persist}
 Let~$\gamma = 0$ and~$\phi_1 > 1$ and~\eqref{eq:LN}.
 Then, for all~$\ep > 0$, there is an event~$G_{m, n}^3$ measurable with respect to the graphical representation in~$\C_{m, n}$ such that
 $$ F_{m, n} \cap G_{m, n}^1 \cap G_{m, n}^3 \subset E_{m - 1, n + 1} \cap E_{m + 1, n + 1} \quad \hbox{and} \quad P (G_{m, n}^3) \geq 1 - \ep / 4 \ \hbox{for all~$L$ large}. $$
\end{lemma}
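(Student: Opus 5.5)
Starting from a time $s \in [(2n+1)L, (2n+2)L]$ at which $(m \pm 1) e_1$ contains $N$ 1s (event $F_{m,n}$), with no 2s present thanks to $G_{m,n}^1$ and $\gamma = 0$, I will show that $Z_t$ stays at or above $K$ until time $(2n+4)L$. The $E_{m\pm1,n+1}$ window is $[(2n+2)L,(2n+4)L]$, which lies within $[s, s+3L]$. By symmetry it suffices to treat $(m+1)e_1$ and then use a union bound, so I aim for probability at least $1-\ep/8$ for each side. As in Lemma~\ref{lem:Nlarge-expand}, the estimate will hold conditionally on everything outside $\C_{m,n}$. This lets us define $G_{m,n}^3$ as the graphical-representation event inside $\C_{m,n}$ on which no excursion below $K$ occurs. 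Because the dynamics only needs $Z_t \geq K \geq 1$, external infection from $m e_1$ is not required; the cluster sustains itself.

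The approach is to decompose the trajectory of $Z_t$ after $s$ into excursions from $N$. While $1 \le Z_t < N$, the embedded chain moves up with probability $\phi_1/(\phi_1+1) = p$ and down with probability $q$. From $N$ the next jump is necessarily down to $N-1$, since $N$ is reflecting. Each excursion therefore starts at $N-1$, and by Lemma~\ref{lem:RW2} it reaches $K$ before returning to $N$ with probability at most $\phi_1^{-(N-K-1)}$. We only need $Z$ to avoid $K$ through at most $J$ excursions, where $J$ bounds the number of jumps in time $3L$. Lemma~\ref{lem:RW3}, applied with $n = 6NL\Lambda$, gives $P(T(n) \le 3L) \le (\sqrt e/2)^{n}$, so with high probability there are fewer than $6N\Lambda L$ jumps, hence fewer excursions, in $[s, s+3L]$. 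Here $\Lambda$ should be read simply as a bound on the jump rate per individual, e.g. $1+\phi_1$, since external infections are irrelevant above $0$.

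The union bound then gives
$$ P(\text{hit } K \text{ before } s+3L) \le 6N\Lambda L\,\phi_1^{-(N-K-1)} + (\sqrt e/2)^{6N\Lambda L}. $$
By \eqref{eq:LN}, $L$ is linear in $N$, so the first term is polynomial in $N$ times $\phi_1^{-N/2+O(1)}$. Since $\phi_1 > 1$, this tends to zero, and the whole bound is below $\ep/8$ for $L$ (equivalently $N$) large. The event also ensures $Z_t \ge K$ on $[(2n+2)L,(2n+4)L]$, and no 2s appear because $\gamma = 0$ and $G^1_{m,n}$ cleared them. This yields $F_{m,n}\cap G^1_{m,n}\cap G^3_{m,n}\subset E_{m\pm1,n+1}$.

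The main obstacle is the coupling between the continuous-time cluster process and the discrete random walk. Counting excursions via jumps requires that the random time $s$ (the first hitting of $N$) be a stopping time. The strong Markov property applied there makes the subsequent excursions independent of the past. Lemma~\ref{lem:RW3} bounds jump counts uniformly because the rates are bounded by $N(1+\phi_1)$. A second point to check is measurability within $\C_{m,n}$: the time horizon $s+3L \le (2n+5)L$ may exceed the block, but we only need control up to $(2n+4)L$, so we truncate there.
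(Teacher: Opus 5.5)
Your proposal is correct and follows the paper's proof essentially step for step. The paper also decomposes into excursions from $N$ counted by jumps, bounds $P_N(\tau_K^Z \le 3L)$ by $P_N(T(6\Lambda NL) \le 3L) + 6\Lambda NL\,P_{N-1}(\tau_K^X < \tau_N^X)$ via Lemmas~\ref{lem:RW3} and~\ref{lem:RW2}, obtains $\ep/8$ for each side, and combines the two sides. Your reading of $\Lambda$ as a per-individual jump-rate bound such as $1+\phi_1$ is the right one: the paper's $\phi_1 \vee \lambda_1/2$ does not dominate the rate $i(1+\phi_1)$ when $N$ is large, and the change does not affect the conclusion.
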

\begin{proof}
 Letting~$\tau_K^Z = \inf \{t \geq 0 : Z_t = K \}$, we get
\begin{equation}
\label{eq:Nlarge-persist-1}
\begin{array}{l}
  P (E_{m - 1, n + 1} \cap E_{m + 1, n + 1} \,| \,F_{m, n} \cap G_{m, n}^1) \vspace*{4pt} \\ \hspace*{40pt} \geq
 (P (\tau_K^Z > 3L \,| \,Z_0 = N))^2 = (P_N (\tau_K^Z > 3L))^2 \end{array}
\end{equation}
 regardless of the graphical representation outside~$\C_{m, n}$.
 Using that the embedded Markov chain associated with~$Z_t$ is described by the random walk~$X_n$ as long as the process is between the origin and~$N$, it follows from Lemmas~\ref{lem:RW2} and~\ref{lem:RW3} and the union bound that
\begin{equation}
\label{eq:Nlarge-persist-2}
\begin{array}{rcl}
  P_N (\tau_K^Z \leq 3L) & \n = \n & P_N (\tau_K^Z \leq 3L, T (6 \Lambda NL) \leq 3L) + P_N (\tau_K^Z \leq 3L, T (6 \Lambda NL) > 3L) \vspace*{4pt} \\
                         & \n \leq \n & P_N (T (6 \Lambda NL) \leq 3L) + 6 \Lambda NL P_{N - 1} (\tau_K^X < \tau_N^X) \vspace*{4pt} \\
                         & \n \leq \n & (\sqrt{e} / 2)^{6 \Lambda NL} + 6 \Lambda NL \phi_1^{- (N - K - 1)} \vspace*{4pt} \\
                         & \n \leq \n & (\sqrt{e} / 2)^{(p - q) \Lambda L^2} + 2 (p - q) \Lambda L^2 \phi_1^{- (p - q) L/8 + 1} \leq \ep/8 \end{array}
\end{equation}
 for all~$L$ sufficiently large.
 The first inequality above follows from the fact that, if~$Z_t$ reaches~$K$ but jumps less than~$6 \Lambda NL$ times in~$3L$ units of time, then it takes less than~$6 \Lambda NL$ attempts to reach~$K$ before~$N$ starting from~$N - 1$.
 Combining~\eqref{eq:Nlarge-persist-1}--\eqref{eq:Nlarge-persist-2}, we conclude that
 $$ P (E_{m - 1, n + 1} \cap E_{m + 1, n + 1} \,| \,F_{m, n} \cap G_{m, n}^1) \geq (1 - \ep/8)^2 \geq 1 - \ep/4. $$
 Because, as previously, the estimates above are true regardless of the graphical representation outside the space-time block~$\C_{m, n}$, this proves the lemma.
\end{proof} \\ \\
 Using Lemmas~\ref{lem:philarge-clean}, \ref{lem:Nlarge-expand} and~\ref{lem:Nlarge-persist}, we can now prove the theorem following the same approach as in the previous two sections that focus on the external and internal infection rates. \\ \\
\begin{proofof}{Theorem~\ref{th:Nlarge}.b}
 We say that site~$(m, n) \in \lat_+$ is a good site if
 $$ E_{m, n} = \{\hbox{cluster~$m e_1$ contains at least $K$~1s, but no~2s, for all $t\in [2nL, (2n+2)L]$} \} $$
 occurs.
 Take~$\ep < 3^{- 4 (2M + 1)^2}$ with~$M = 2$ since
 $$ \C_{m, n} \cap \C_{m', n'} = \varnothing \quad \hbox{whenever} \quad |m - m'| > 2 \ \ \hbox{or} \ \ |n - n'| > 2. $$
 As previously, to study the process with~$\gamma > 0$ small, we define
 $$ G_{m, n}^4 = \{\hbox{there are no~$\circ$ in~$\C_{m, n}$} \} \quad \hbox{and} \quad G_{m, n} = G_{m, n}^1 \cap G_{m, n}^2 \cap G_{m, n}^3 \cap G_{m, n}^4. $$
 It follows from the first part of Lemmas~\ref{lem:Nlarge-expand} and~\ref{lem:Nlarge-persist}, and the definition of~$G_{m, n}^4$ that
\begin{equation}
\label{eq:Nlarge-inclusion}
\begin{array}{rcl}
  E_{m, n} \cap G_{m, n} & \n = \n & (E_{m, n} \cap G_{m, n}^1 \cap G_{m, n}^2 \cap G_{m, n}^4) \cap G_{m, n}^3 \vspace*{4pt} \\
                         & \n \subset \n & F_{m, n} \cap G_{m, n}^1 \cap G_{m, n}^3 \cap G_{m, n}^4 \subset E_{m - 1, n + 1} \cap E_{m + 1, n + 1}. \end{array}
\end{equation}
 In addition, letting~$L = 4N / (p - q)$ or~$N = (p - q) L/4$ as in~\eqref{eq:LN}, it follows from Lemma~\ref{lem:philarge-clean} and the second part of Lemmas~\ref{lem:Nlarge-expand} and~\ref{lem:Nlarge-persist} that we can find
 $$ L = L (\ep) < \infty \ \ \hbox{such that} \ \ P (G_{m, n}^i) \geq 1 - \ep / 4 \ \ \hbox{for} \ \ i = 1, 2, 3. $$
 Once~$L$ and~$N$ are fixed, using Lemma~\ref{lem:perturb-1} with~$\C_{m, n}$ instead of~$\B$, we can also find
 $$ \gamma_- = \gamma_- (\ep, L) > 0 \ \ \hbox{such that} \ \ P (G_{m, n}^4) \geq 1 - \ep / 4 \ \ \hbox{for all} \ \ \gamma < \gamma_-. $$
 Using the union bound, we deduce that
\begin{equation}
\label{eq:Nlarge-proba}
\textstyle P (G_{m, n}) \geq 1 - \sum_{i = 1, 2, 3, 4} P ((G_{m, n}^i)^c) \geq 1 - \ep.
\end{equation}
 As previously, survival follows from~\cite[Theorem~4.3]{durrett_1995}, Lemma~\ref{lem:perco-1} and~\eqref{eq:Nlarge-inclusion}--\eqref{eq:Nlarge-proba}.
\end{proofof} \\ \\
\begin{proofof}{Theorem~\ref{th:Nlarge}.c}
 This follows the proof of Theorem~\ref{th:Nlarge}.b with the following exceptions.
\begin{enumerate}
\item
 Lemma~\ref{lem:philarge-clean} still holds under the assumptions of the theorem but is no longer needed because the~1s turn instantaneously into~2s in the limiting case~$\gamma = \infty$. \vspace*{4pt}
\item
 The events~$G_{m, n}^2$ and~$G_{m, n}^3$ are defined similarly looking at the~2s rather than the~1s. \vspace*{4pt}
\item
 As before, because we need to do a perturbation at~$\gamma = \infty$ instead of~$\gamma = 0$, the probability of the event~$G_{m, n}^4$ is now controlled by Lemma~\ref{lem:perturb-2} instead of Lemma~\ref{lem:perturb-1}.
\end{enumerate}
 This completes the proof of the theorem.
\end{proofof}


\begin{figure}[t!]
\centering
\scalebox{0.40}{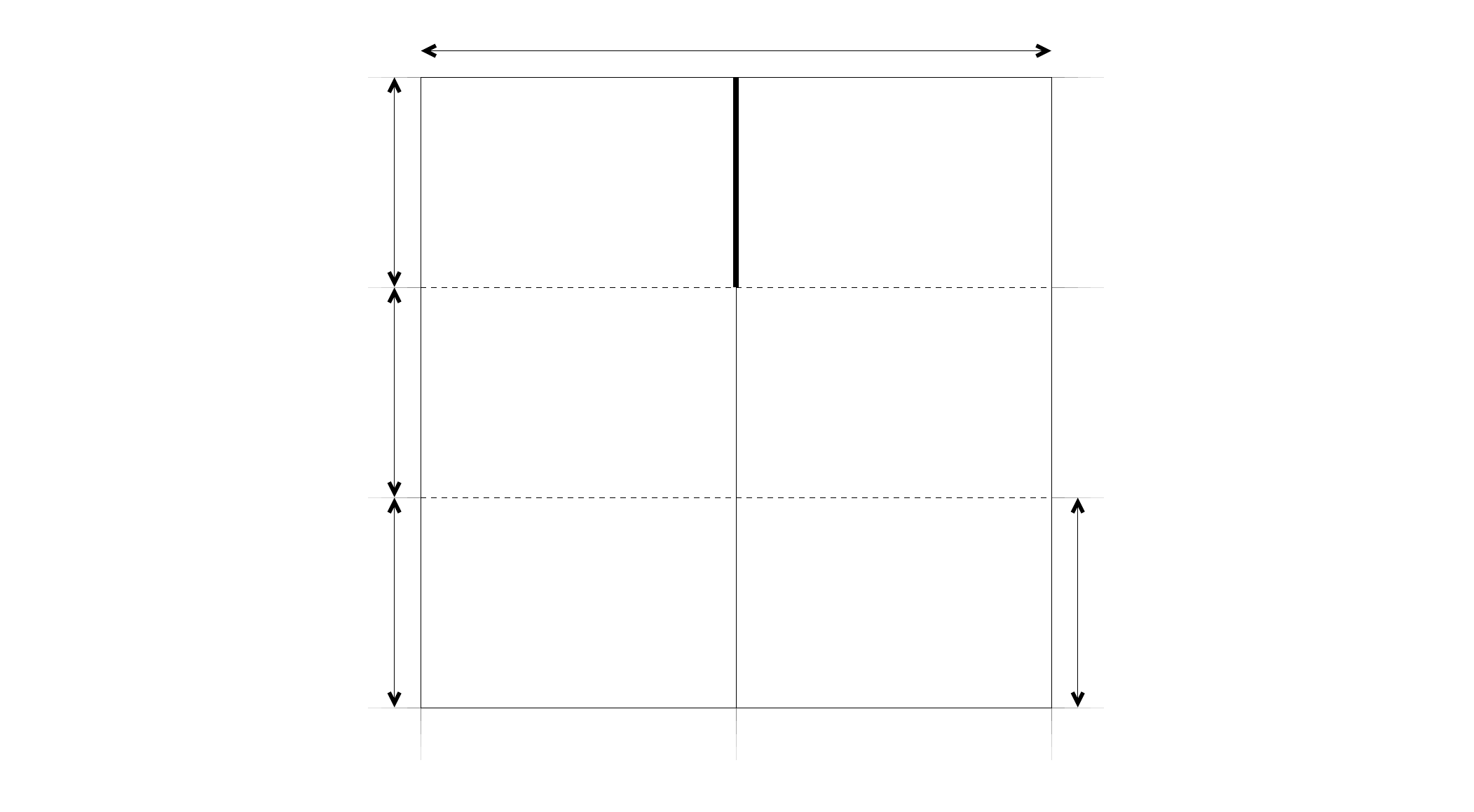}
\caption{\upshape{
 Picture of the blocks and events used in the proof of Theorem~\ref{th:lambdasmall}.b.
 The bold line represents the core~$\A_{m, n}$ and the big block represents the pillar~$\C_{m, n}$.
 All the~2s in the pillar then the~1s at cluster~$m$ die out consecutively in less than~$L$ units of time, and cluster~$m$ remains healthy for at least~$L$ more units of time.}}
\label{fig:decay2}
\end{figure}

\section{Proof of Theorems \ref{th:lambdasmall}.b and~\ref{th:lambdasmall}.c}
\label{sec:lambdasmall}
 To prove Theorem~\ref{th:lambdasmall}.b, the key is to observe that, when~$\gamma = 0$, the process essentially behaves as if there were no~2s, so if in addition~$\lambda_1 = 0$, the process reduces to a collection of independent random walks trapped at zero.
 To prove Theorem~\ref{th:lambdasmall}.c, observe that, when~$\gamma = \infty$, the same conclusion holds after reversing the roles of the~1s and the~2s.
 To turn our arguments into a proof, we first use a block construction to control the rate of decay, and then use two consecutive perturbation arguments, one for the relevant external infection rate and one for the mutation rate.
 To begin with, let~$L$ be a large scale parameter to be fixed later.
 Recalling the second percolation model introduced in Section~\ref{sec:perco}, for all~$(m, n) \in \lat_-$, we consider the space-time blocks
 $$ \begin{array}{rcl}
	\A_{m, n} & \n = \n & \{m \} \times \{1, 2, \ldots, N \} \times  [(n + 2) L, (n + 3) L], \vspace*{4pt} \\
    \C_{m, n} & \n = \n & [m - 1, m + 1]^d \times \{1, 2, \ldots, N \} \times [nL, (n + 3) L], \end{array} $$
 shown in Figure~\ref{fig:decay2}.
 We refer to~$\A_{m, n}$ as the core, and to~$\C_{m, n}$ as the pillar.
 The goal is to prove that, regardless of the configuration and graphical representation outside the pillar, the core is healthy with probability close to one when~$L$ is large.
 We first prove Theorem~\ref{th:lambdasmall}.b and briefly explain how to also deduce Theorem~\ref{th:lambdasmall}.c.
 To study the process in the~$\gamma = 0$ case, the first step is to get rid of the potential~2s at the bottom of the pillar.
 More precisely, letting
 $$ G_{m, n}^1 = \{\forall \,(x, i) \in \pi_{12} (\C_{m, n}), \,\exists \,t \in [nL, (n + 1) L) : \hbox{there is a~$\times$ at~$(x, i, t)$} \}, $$
 we have the following lemma.
\begin{lemma}
\label{lem:lambdasmall-clean}
 For all~$\ep > 0$,~$P (G_{m, n}^1) \geq 1 - \ep / 4$ for all~$L$ large.
\end{lemma}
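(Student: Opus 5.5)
The argument follows the proofs of Lemmas~\ref{lem:lambdalarge-clean} and~\ref{lem:philarge-clean}. The event~$G_{m, n}^1$ only involves the death marks~$\times$ at the sites of~$\pi_{12} (\C_{m, n})$ during the time interval~$[nL, (n + 1) L)$. These marks come from independent Poisson processes with rate one, one attached to each site~$(x, i)$. It is therefore a product of independent events, and no information about the configuration or the rest of the graphical representation is needed.

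First, I count the sites. The set~$\pi_{12} (\C_{m, n}) = [m - 1, m + 1]^d \times \{1, 2, \ldots, N \}$ contains exactly~$3^d N$ sites. At each of them, the number of~$\times$ in an interval of length~$L$ is~$\poisson (L)$, so the probability that there is at least one is~$1 - e^{-L}$. By independence,
 $$ P (G_{m, n}^1) = \Big(1 - e^{-L} \Big)^{3^d N} \geq 1 - 3^d N e^{-L}, $$
 where the last inequality is Bernoulli's inequality (equivalently, a union bound over the~$3^d N$ sites).

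Second, I use that~$N$ and~$d$ are fixed, since in Theorem~\ref{th:lambdasmall} only~$L$ is sent to infinity. The right-hand side is then at least~$1 - \ep / 4$ as soon as~$L \geq \ln (4 \cdot 3^d N / \ep)$, which proves the lemma. There is no real obstacle here. The only point to check is that the number of sites does not grow with~$L$, because the pillar has spatial width three clusters. This makes the estimate even easier than in Lemma~\ref{lem:lambdalarge-clean}, where the number of sites grew polynomially in~$L$ and had to be beaten by a Poisson tail of order~$e^{-L^2}$.
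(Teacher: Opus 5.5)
Your proof is correct and matches the paper's argument: the paper also uses the independence of the rate-one death-mark Poisson processes over the $3^d N$ sites of the pillar to get $P(G_{m,n}^1) = (1 - e^{-L})^{3^d N}$, and then lets $L \to \infty$. Your Bernoulli bound $1 - 3^d N e^{-L}$ only adds an explicit threshold for $L$.
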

\begin{proof}
 Using the same approach as in the proof of Lemma~\ref{lem:lambdalarge-clean}, we have
 $$ P (G_{m, n}^1) = \Big(P (\poisson (L) \neq 0) \Big)^{3^d N} = \Big(1 - e^{-L} \Big)^{3^d N}, $$
 which goes to one as~$L$ goes to infinity.
\end{proof} \\ \\
 Note that, on the event~$G_{m, n}^1$ and when the mutation rate~$\gamma = 0$, there are no more symptomatic individuals in the pillar starting from time~$(n + 1) L$ onward.
 Our second good event~$G_{m, n}^2$ is defined implicitly from the configuration event
 $$ F_{m, n} = \{\forall \,i \in \{1, 2, \ldots, N \}, \,\xi_{(n + 2) L} (m, i) = 0 \} $$
 that the asymptomatic individuals at cluster~$m$ also die out in less than~$L$ units of time.
\begin{lemma}
\label{lem:RW}
 Let~$\lambda_1 = \gamma = 0$.
 Then, for all~$\ep > 0$, there exists an event~$G_{m, n}^2$ measurable with respect to the graphical representation in~$\C_{m, n}$ such that
 $$ G_{m, n}^1 \cap G_{m, n}^2 \subset F_{m, n} \quad \hbox{and} \quad P (G_{m, n}^2) \geq 1 - \ep / 4 \ \hbox{for all~$L$ large}. $$
\end{lemma}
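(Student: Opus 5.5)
With $\lambda_1 = \gamma = 0$, cluster $m$ evolves autonomously once the pillar is free of 2s. The plan is to show that, on $G_{m,n}^1$, no 2 is present in $\pi_{12}(\C_{m,n})$ from time $(n+1)L$ onward, because $\gamma = 0$ means no 2 can be created. Cluster $m$ can still receive external infections only through 2-arrows, which require a 2 at a neighbouring cluster, and 1-arrows have rate $\lambda_1 = 0$. Hence after time $(n+1)L$ cluster $m$ receives no external infection. The number $Z_t$ of 1s at cluster $m$ then evolves as a birth-death chain absorbed at zero, with jumps $i \to i+1$ at rate $i\phi_1$ for $i < N$ and $i \to i-1$ at rate $i$. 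Every jump of this chain is realized by marks (\bul{1} and $\times$) at cluster $m$, which lie inside $\C_{m,n}$. We then define $G_{m,n}^2$ as the event that, for every possible starting value $Z_{(n+1)L} = k \in \{0, \ldots, N\}$, the chain driven by the graphical representation at cluster $m$ on $[(n+1)L, (n+2)L]$ hits zero before time $(n+2)L$. Zero is absorbing because there are no external infections, so this gives $G_{m,n}^1 \cap G_{m,n}^2 \subset F_{m,n}$ regardless of what happens outside the pillar.

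To make this measurable and easy to bound, we would instead use a monotone worst case. Attractiveness (Lemma~\ref{lem:irpa} with equal parameters) restricted to the single cluster with no external input shows that the chain started from $N$ (all levels 1) dominates the chains started from every $k \le N$. So it suffices to take $G_{m,n}^2 = \{$the chain started at $N$ at time $(n+1)L$ hits zero by time $(n+2)L\}$, which is measurable with respect to the marks at $\{m\} \times \{1,\ldots,N\} \times [(n+1)L, (n+2)L] \subset \C_{m,n}$.

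For the probability bound, the chain is a finite-state continuous-time Markov chain on $\{0, \ldots, N\}$ with zero accessible from every state: each state $i \ge 1$ has down-rate $i \ge 1$. So the absorption time $\tau$ from $N$ is almost surely finite, with exponential tails. The cleanest argument uses a time step of length $1$: from any state, the probability of reaching $0$ within one unit of time is at least some $\delta = \delta(N, \phi_1) > 0$, for instance the probability that the next $N$ events are all recoveries and occur within time $1$. The Markov property then gives $P(\tau > L) \le (1-\delta)^{\lfloor L \rfloor} \le \ep/4$ for $L$ large. Since $N$ and $\phi_1$ are fixed here, $\delta$ does not depend on $L$.

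The main obstacle is the justification that no external infection can reach cluster $m$ after time $(n+1)L$. This needs the 2-arrows pointing to $m$ to originate from neighbours $y \sim x$, which all lie in $[m-1,m+1]^d$, and these neighbours must be 2-free on $G_{m,n}^1$ when $\gamma = 0$. A subtle point is that a $\times$ only kills the current occupant, so we need that no 2 can reappear at a site after its death mark; this holds because 2s arise only by mutation. Once this is checked, the rest is routine.
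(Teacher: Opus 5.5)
Your proposal is correct and follows the paper's argument: once external infection of cluster $m$ is shut off after time $(n+1)L$, the number of 1s there is a finite birth--death chain absorbed at $0$. You also make explicit two things the paper leaves implicit, namely a concrete measurable $G^2_{m,n}$ (worst-case start from $N$ via attractiveness) and a geometric tail bound in place of the paper's almost-sure finiteness plus monotone convergence. One caveat, which you share with the paper's remark preceding the lemma: your claim that a $\times$ permanently removes a 2 holds for the construction without the level-reordering convention of Section~\ref{sec:GR}, because under that convention a 2 can slide down to a level whose death mark in $[nL,(n+1)L)$ has already occurred.
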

\begin{proof}
 Because there are no more~2s in the pillar at time~$(n + 1) L$ and~$\lambda_1 = 0$, the core can no longer be subject to external infections, so the number of~1s in the core evolves according to the continuous-time random walk~$X_t$ with transition rates
 $$ \begin{array}{rclcl}
      j \to j + 1 & \hbox{at rate} & j \phi_1 & \hbox{for} & 0 \leq j < N, \vspace*{4pt} \\
      j \to j - 1 & \hbox{at rate} & j        & \hbox{for} & 0 < j \leq N. \end{array} $$
 This process has two communication classes:
 $C_0 = \{0 \}$ and~$C_+ = \{1, 2, \ldots, N \}$.
 The class~$C_0$ is closed and finite so recurrent, whereas~$C_+$ is open so transient.
 Because the class~$C_+$ is also finite, we deduce that the time to extinction of the process,
 $$ T_0 = \inf \{t \geq (n + 1) L : X_t = 0 \}, $$
 is almost surely finite.
 This and the monotone convergence theorem imply that, for all~$\ep > 0$ and regardless of~$X_{(n + 1) L}$, there exists a large enough~$L$ such that
 $$ \begin{array}{rcl}
      P (F_{m, n} \,| \,G_{m, n}^1) & \n = \n & P (\sum_{i = 1, 2, \ldots, N} \xi_{(n + 2) L} (m, i) = 0 \,| \,G_{m, n}^1) \vspace*{4pt} \\
                                    & \n = \n & P (X_{(n + 2) L} = 0) = P (T_0 - (n + 1) L \leq L) \geq 1 - \ep / 4, \end{array} $$
 which proves the lemma.
 Indeed, the event~$G_{m, n}^2$ can be defined implicitly in terms of Poisson processes via a specific sequence of death marks and internal infections \bul{1} at cluster~$m$ so it only depends on the graphical representation in the block $\C_{m, n}$.
\end{proof} \\ \\
 Using Lemmas~\ref{lem:lambdasmall-clean}--\ref{lem:RW}, we can now prove Theorem~\ref{th:lambdasmall}.b.
 For a visualization of the different steps of the proof, we refer the reader to Figure~\ref{fig:decay2}. \\ \\
\begin{proofof}{Theorem~\ref{th:lambdasmall}.b}
 We say that~$(m, n) \in \lat_-$ is a good site if
 $$ E_{m, n} = \{\A_{m, n} \ \hbox{is healthy} \} = \{\xi_t (x, i) = 0 \ \hbox{for all} \ (x, i, t) \in \A_{m, n} \} $$
 occurs.
 Take~$\ep < (2d + 1)^{- (2M + 1)^{d + 1}}$ with~$M = 3$ since
 $$ \C_{m, n} \cap \C_{m', n'} = \varnothing \quad \hbox{whenever} \quad |m - m'| > 2 \ \ \hbox{or} \ \ |n - n'| > 3. $$
 Note that, when~$\lambda_1 = \gamma = 0$, and on the event~$G_{m, n}^1 \cap G_{m, n}^2 \subset F_{m, n}$, all the clusters in the pillar from time~$(n + 1) L$ onward have no~2s and the core is healthy~(event~$E_{m, n}$).
 To extend this result to~$\lambda_1, \gamma > 0$ small, we first fix~$L$ large so that Lemmas~\ref{lem:lambdasmall-clean}--\ref{lem:RW} hold, and define
 $$ G_{m, n}^3 = \{\hbox{there are no~$\arrow{1}$ in~$\C_{m, n}$} \} \quad \hbox{and} \quad G_{m, n}^4 = \{\hbox{there are no~$\circ$ in~$\C_{m, n}$} \}. $$
 Then, the proof of Lemma~\ref{lem:perturb-1}~(replacing the rate~$\gamma$ with the infection rate~$2d \lambda_1$) and Lemma~\ref{lem:perturb-1} itself with~$M = 3$ and~$T = L$ imply that there exist~$\lambda_-, \gamma_- > 0$ such that
 $$ \lambda_1 < \lambda_- \ \hbox{and} \ \gamma < \gamma_- \quad \Longrightarrow \quad P (G_{m, n}^3) \geq 1 - \ep / 4 \ \ \hbox{and} \ \ P (G_{m, n}^4) \geq 1 - \ep / 4. $$
 In addition, on these events, the process evolves as if~$\lambda_1 = \gamma = 0$ in the pillar, so the core is again fully healthy.
 In summary, calling~$G_{m, n}$ the intersection of the four events~$G_{m, n}^i$,
 $$ \textstyle P (G_{m, n}) \geq 1 - \sum_{i = 1, 2, 3, 4} P ((G_{m, n}^i)^c) \geq 1 - \ep \quad \hbox{and} \quad G_{m, n} \subset E_{m, n}, $$
 where the event~$G_{m ,n}$ only depends on the graphical representation in the block~$\C_{m ,n}$.
 This shows that the set of good sites dominates the set of open sites in a~3-dependent oriented site percolation process on the directed graph~$\vec{\lat}_-$ with parameter~$1 - \ep$.
 Lemma~\ref{lem:perco-2} and our choice of~$\ep$ imply that the closed sites do not percolate, so the sites that are not good, corresponding to cores that might be infected, do not percolate either.
 This shows extinction.
\end{proofof} \\ \\
\begin{proofof}{Theorem~\ref{th:lambdasmall}.c}
 This follows the proof of Theorem~\ref{th:lambdasmall}.b with a few exceptions:
\begin{enumerate}
\item
 Lemma~\ref{lem:lambdasmall-clean} still holds under the assumptions of the theorem but is no longer needed because the~1s turn instantaneously into~2s in the limiting case~$\gamma = \infty$. \vspace*{4pt}
\item
 Lemma~\ref{lem:RW} still holds~(and the proof is identical) under the new assumption~$\lambda_2 = 0$ and~$\gamma = \infty$ after reversing the roles of the~1s and the~2s. \vspace*{4pt}
\item
 The event~$G_{m, n}^3$ is defined similarly but looks at the~$\arrow{2}$ instead of the~$\arrow{1}$. \vspace*{4pt}
\item
 Because we need to do a perturbation at~$\gamma = \infty$ instead of~$\gamma = 0$, the event~$G_{m, n}^4$ is now the event that any potential internal or external infection in the block~$\C_{m ,n}$ is followed by a mutation before the next potential infection in the block.
 In particular, the probability of this event is now controlled by Lemma~\ref{lem:perturb-2} instead of Lemma~\ref{lem:perturb-1}.
\end{enumerate}
 This completes the proof of the theorem.
\end{proofof}


\section{Proof of Theorems~\ref{th:phismall}.b and~\ref{th:phismall}.c}
\label{sec:phismall}
 As for the proof of Theorems~\ref{th:lambdasmall}.b and~\ref{th:lambdasmall}.c, the proof of Theorems~\ref{th:phismall}.b and~\ref{th:phismall}.c relies on a block construction that controls the rate of decay for extreme values of the relevant internal infection rate and the mutation rate, followed by two consecutive perturbation arguments, one for each of those two rates.
 The technical details, however, are quite different.
 For Theorem~\ref{th:phismall}.b, the key is to observe that, when~$\gamma = 0$, the process essentially behaves as if there were no~2s, so if in addition~$\phi_1 = 0$, the process reduces to a subcritical contact process with parameter~$\lambda_1$.
 For Theorem~\ref{th:phismall}.c, in which case we start by imposing~$\gamma = \infty$, the same conclusion holds after reversing the roles of the~1s and the~2s.
 As in the previous section, we let~$L$ be a large scale parameter to be fixed later, but for every site~$(m, n) \in \lat_-$, we now consider the space-time blocks
 $$ \begin{array}{rcl}
    \A_{m, n} & \n = \n & [(2m - 1) L, (2m + 1) L]^d \times \{1, 2, \ldots, N \} \times [(n + 3) L, (n + 4) L], \vspace*{4pt} \\
    \B_{m, n} & \n = \n & [(2m - 2) L, (2m + 2) L]^d \times \{1, 2, \ldots, N \} \times [(n + 2) L, (n + 4) L], \vspace*{4pt} \\
    \C_{m, n} & \n = \n & [(2m - 2) L, (2m + 2) L]^d \times \{1, 2, \ldots, N \} \times [nL, (n + 4) L], \end{array} $$
 shown in Figure~\ref{fig:decay1}.
\begin{figure}[t!]
\centering
\scalebox{0.40}{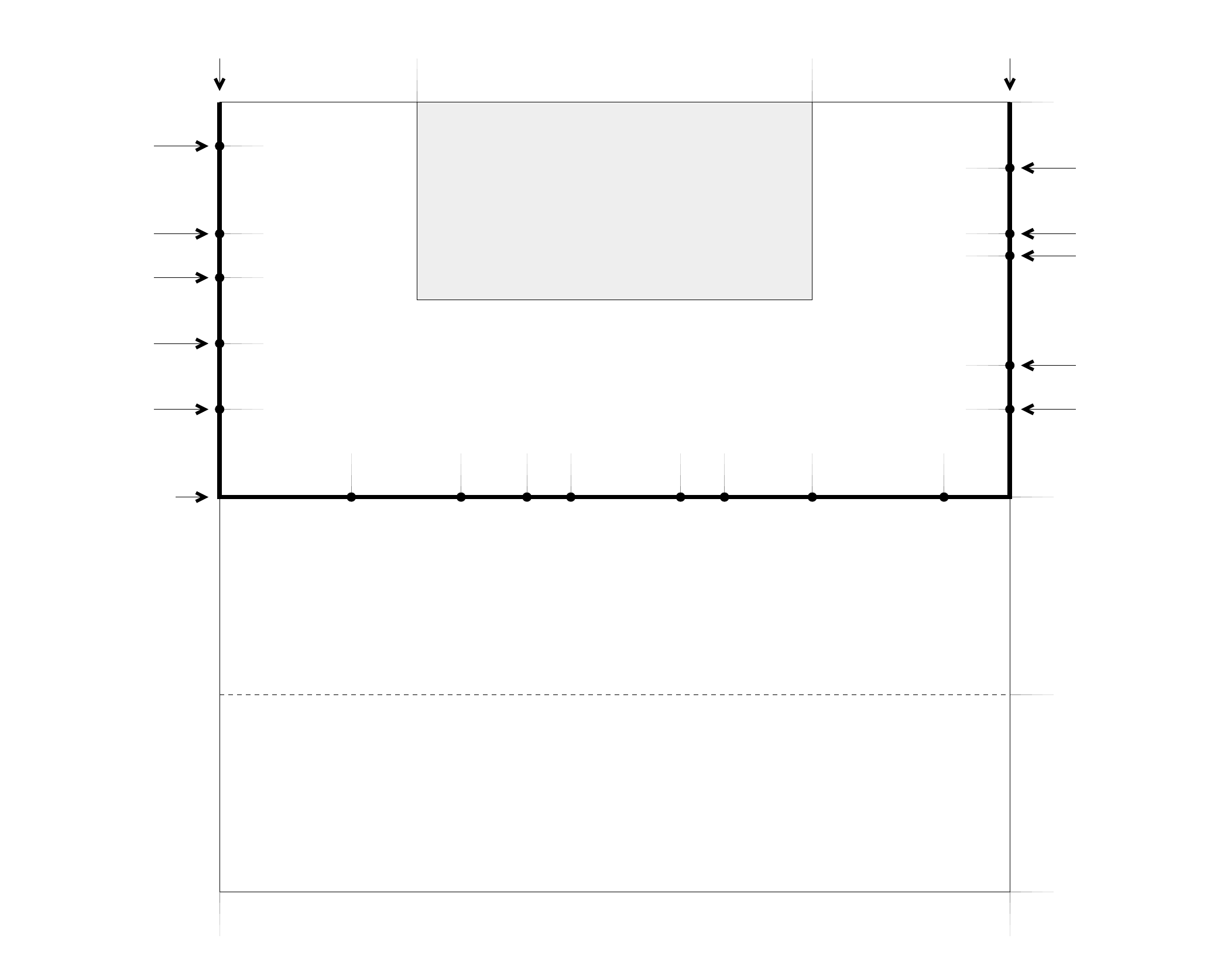}
\caption{\upshape{
 Picture of the blocks and events used in the proof of Theorem~\ref{th:phismall}.b.
 The bold lines represent the bottom and lateral boundaries of~$\B_{m, n}$, and the dots along these lines the entry points of the infection.}}
\label{fig:decay1}
\end{figure}
 The goal is to prove that, regardless of the configuration and the graphical representation outside the larger block~$\C_{m, n}$, with probability close to one when~$L$ is large, the smaller block~$\A_{m, n}$ is healthy.
 First, we show that, with high probability, at each site in~$\C_{m, n}$, there is at least one death mark in the first~$L$ units of time and another death mark in the next~$L$ units of time.
 This is done in the next lemma, looking at the events
 $$ \begin{array}{rcl}
      G_{m, n}^1 & \n = \n & \{\forall \,(x, i) \in \pi_{12} (\C_{m, n}), \,\exists \,t \in [nL, (n + 1) L) : \hbox{there is a~$\times$ at~$(x, i, t)$} \}, \vspace*{4pt} \\
      G_{m, n}^2 & \n = \n & \{\forall \,(x, i) \in \pi_{12} (\C_{m, n}), \,\exists \,t \in [(n + 1) L, (n + 2) L) : \hbox{there is a~$\times$ at~$(x, i, t)$} \}. \end{array} $$
\begin{lemma}
\label{lem:phismall-clean}
 For all~$\ep > 0$,~$P (G_{m, n}^1) = P (G_{m, n}^2) \geq 1 - \ep / 5$ for all~$L$ large.
\end{lemma}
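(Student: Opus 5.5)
The plan is to mimic the proofs of Lemma~\ref{lem:lambdalarge-clean} and Lemma~\ref{lem:lambdasmall-clean}, since both events only involve the Poisson processes of death marks, which occur at rate one at each site. First, the equality $P (G_{m, n}^1) = P (G_{m, n}^2)$ follows because the two events concern the same finite set of sites $\pi_{12} (\C_{m, n})$ and time windows $[nL, (n + 1) L)$ and $[(n + 1) L, (n + 2) L)$ of the same length $L$. By stationarity of the Poisson processes in time, the two events therefore have the same law.

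Next I count the sites. The spatial projection of $\C_{m, n}$ is the cube $[(2m - 2) L, (2m + 2) L]^d$, which contains $(4L + 1)^d$ clusters, each with $N$ levels. So $\pi_{12} (\C_{m, n})$ has $(4L + 1)^d N$ sites. By independence of the death-mark processes across sites,
 $$ P (G_{m, n}^1) = \Big(P (\poisson (L) \neq 0) \Big)^{(4L + 1)^d N} = \Big(1 - e^{-L} \Big)^{(4L + 1)^d N}. $$

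Finally I take logarithms:
 $$ \ln P (G_{m, n}^1) = (4L + 1)^d N \ln (1 - e^{-L}) \sim - (4L + 1)^d N e^{-L}. $$
This tends to zero as $L \to \infty$, since the polynomial factor $(4L + 1)^d$ is beaten by $e^{-L}$, with $N$ and $d$ fixed. Hence $P (G_{m, n}^1) \geq 1 - \ep / 5$ for all $L$ large. If one wants to avoid the asymptotic equivalence, the inequality $(1 - x)^k \geq 1 - kx$ gives directly $P (G_{m, n}^1) \geq 1 - (4L + 1)^d N e^{-L}$, which is at least $1 - \ep / 5$ for $L$ large.

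The only point requiring any care is this growth comparison: the number of sites grows with $L$ here, unlike in Lemma~\ref{lem:lambdasmall-clean}, but only polynomially. There is no real obstacle.
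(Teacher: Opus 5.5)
Your proposal is correct and is essentially the paper's proof. Both count the $(4L+1)^d N$ sites of $\pi_{12}(\C_{m,n})$, use independence of the rate-one death-mark processes to get $P(G_{m,n}^1) = P(G_{m,n}^2) = (1 - e^{-L})^{(4L+1)^d N}$, and let $L \to \infty$; your Bernoulli-inequality bound $1 - (4L+1)^d N e^{-L}$ is a slightly cleaner finish than the asymptotic equivalence used in Lemma~\ref{lem:lambdalarge-clean}.
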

\begin{proof}
 Using the same approach as in the proof of Lemma~\ref{lem:lambdalarge-clean}, we have
 $$ P (G_{m, n}^1) = P (G_{m, n}^2) = \Big(P (\poisson (L) \neq 0) \Big)^{(4L + 1)^d N} = \Big(1 - e^{-L} \Big)^{(4L + 1)^d N}, $$
 which goes to one as~$L$ goes to infinity.
\end{proof} \\ \\
 It is important to note for later that the events~$G_{m, n}^1$ and~$G_{m, n}^2$ act as two consecutive sweeps, one to get rid of one type of infected individuals, and one to get rid of possible multiple infections in each cluster.
 More precisely, we have the following.
\begin{itemize}
\item
 When~$\gamma = 0$, the~2s can't be created, so the first event guarantees that there are no~2s in the block at time~$(n + 1) L$.
 If in addition~$\phi_1 = 0$, the~1s are not infectious in their cluster, so the second event guarantees that the clusters contain at most one~1 at time~$(n + 2) L$, and from this time onward, the process reduces to a contact process with parameter~$\lambda_1$. \vspace*{4pt}
\item
 When~$\gamma = \infty$, the~1s turn instantaneously into~2s.
 If in addition~$\phi_2 = 0$, the~2s are not infectious in their cluster, so the first event guarantees that the clusters contain at most one~2 at time~$(n + 1) L$, and from this time onward, the process reduces to a contact process with parameter~$\lambda_2$.
 In this context, the second event becomes unimportant.
\end{itemize}
 Next, we prove that none of the infection paths originating from the bottom and lateral boundaries of the block~$\B_{m, n}$ reaches the block~$\A_{m, n}$.
 We first control the number of such paths.
 We call entry points the points along the boundary the infection paths start from.
 The number of entry points at the bottom boundary of~$\B_{m, n}$ is bounded by
\begin{equation}
\label{eq:bottom}
\card (\pi_1 (\B_{m, n})) = (4L + 1)^d.
\end{equation}
 Now, a point~$(x, t)$ on the lateral boundary is a potential entry point only if there is an arrow from outside the block~$\B_{m, n}$ pointing at~$(x, 1, t)$.
 The next lemma controls the number~$Y$ of entry points on the lateral boundary, corresponding to the number of infection paths.
\begin{lemma}
\label{lem:lateral}
 Let~$\lambda = 2LN ((4L + 1)^d - (4L - 1)^d)(\lambda_1 + \lambda_2)$. Then,
\begin{equation}
\label{eq:lateral}
  P (Y > 2 \lambda) \leq (4/e)^{- \lambda} \quad \hbox{for all} \quad L \ \hbox{large}.
\end{equation}
\end{lemma}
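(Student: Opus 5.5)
The plan is to show that $Y$ is dominated by (in fact equal in law to) a Poisson random variable with mean at most $\lambda$. Then I will apply the same exponential Markov argument used in the proof of Lemma~\ref{lem:perturb-2}, with the probability-generating function~\eqref{eq:PGF}.

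\emph{Step 1: identify the mean.} An entry point on the lateral boundary of~$\B_{m, n}$ is the head of an arrow $(x, i, t) \arrow{$j$} (y, 1, t)$ with $y \in \pi_1 (\B_{m, n})$, $x \notin \pi_1 (\B_{m, n})$ and $t \in [(n + 2) L, (n + 4) L]$. Only clusters $y$ near the boundary of the cube can receive such arrows. Their number is bounded by the number of clusters in the shell $[(2m - 2) L, (2m + 2) L]^d \setminus [(2m - 2) L + 1, (2m + 2) L - 1]^d$, which is $(4L + 1)^d - (4L - 1)^d$.

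Each such arrow has a tail at one of the $N$ levels of a neighboring cluster outside the cube. It occurs at rate $\lambda_1 + \lambda_2$ once both arrow types are counted. The time window has length $2L$.

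I will count one outside neighbor per boundary cluster, matching the paper's $\lambda$. For corner clusters, which have several outside neighbors, the discrepancy can be absorbed into ``for all $L$ large'' or into the constant. By the superposition property, $Y$ is then Poisson with parameter at most $\lambda = 2LN((4L + 1)^d - (4L - 1)^d)(\lambda_1 + \lambda_2)$. Domination by a Poisson variable with this mean suffices, since the bound below is monotone in the mean.

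\emph{Step 2: the Chernoff bound.} By Markov's inequality applied to $4^Y$ and~\eqref{eq:PGF} with $s = 4$,
$$ P (Y > 2\lambda) \leq 4^{-2\lambda} E (4^Y) = 16^{-\lambda} e^{3\lambda} = (16/e^3)^{-\lambda}. $$
Since $16/e^3 < 4/e$, this is not quite the stated $(4/e)^{-\lambda}$. I will optimize over the base $s$ instead. The exponent of the bound $s^{-2\lambda} e^{\lambda(s - 1)}$ is $\lambda(s - 1 - 2\ln s)$, which is minimized at $s = 2$. This gives $e^{-\lambda(2\ln 2 - 1)} = (4/e)^{-\lambda}$, exactly the claimed bound.

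\emph{Main obstacle.} The only delicate point is the combinatorial count of boundary clusters and outside neighbors, that is, whether the mean is exactly $\lambda$ or only bounded by a constant multiple of it. If extra neighbors at corners push the true mean $\mu$ above $\lambda$, I will note that $\mu \leq 2d\lambda$ always. For large $L$ the bound can then be restated with an adjusted constant, or the lateral count can be interpreted as the paper does, one arrow direction per boundary site. Either way the estimate that is ultimately needed, exponential decay in $\lambda \asymp L^d$, is unaffected.
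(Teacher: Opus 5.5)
Your argument is the paper's. You dominate the number of lateral entry points by a Poisson variable whose mean is (lateral clusters)~$\times N \times 2L \times (\lambda_1 + \lambda_2)$, and then apply Markov's inequality to~$2^Y$ via~\eqref{eq:PGF}, i.e.\ $P (Y > 2\lambda) = P (2^Y > 4^{\lambda}) \leq 4^{-\lambda} e^{\lambda}$. Your optimization over~$s$ lands on the same choice~$s = 2$. Note that the~$s = 4$ attempt is not merely weaker but vacuous, since~$16/e^3 < 1$.

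The one thing to correct is your claim that the corner discrepancy ``can be absorbed into for all~$L$ large''. It cannot.
\begin{itemize}
\item For~$d \geq 2$, the number of (outside, inside) nearest-neighbor pairs is~$2d (4L + 1)^{d - 1}$. This exceeds~$(4L + 1)^d - (4L - 1)^d$ by order~$L^{d - 2}$.
\item Hence the true mean~$\mu = 4dLN (4L + 1)^{d - 1} (\lambda_1 + \lambda_2)$ of~$Y$ exceeds~$\lambda$ by~$\Delta = \mu - \lambda \asymp L^{d - 1} \to \infty$.
\item A Stirling lower bound on~$P (Y = \lfloor 2\lambda \rfloor + 1)$ gives
$$ \ln P (Y > 2\lambda) \geq - \lambda \ln (4/e) + \Delta - \Delta^2/\lambda - O (\ln L). $$
\item Since~$\Delta^2/\lambda + \ln L = o (\Delta)$, the ratio~$P (Y > 2\lambda) / (4/e)^{-\lambda}$ tends to infinity.
\end{itemize}
So with the stated~$\lambda$ the inequality is false for~$d \geq 2$, and no choice of large~$L$ rescues it. For~$d = 1$ the two counts coincide and everything is exact.

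Your other option is the correct repair: define~$\lambda$ as the true mean~$\mu$, which is at most~$d$ times the stated value since each lateral cluster has at most~$d$ outside neighbors. The same~$s = 2$ computation then gives exactly~$(4/e)^{-\mu}$.

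The paper's proof contains the same oversight: it counts each lateral site once and asserts domination by~$\poisson (\lambda)$. As you say, this is harmless downstream, because Lemma~\ref{lem:healthy} only needs~$\lambda$ to be polynomial in~$L$.
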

\begin{proof}
 The lateral boundary of the block~$\B_{m, n}$ consists of
 $$ N ((4L + 1)^d - (4L - 1)^d) \ \hbox{sites} \times 2L \ \hbox{units of time}. $$
 In addition, arrows originate from each site at rate~$\lambda_1 + \lambda_2$.
 This shows that the number of entry points is dominated by the Poisson random variable with parameter~$\lambda$.
 Recalling the probability-generating function~\eqref{eq:PGF}, and using Markov's inequality, we deduce that
 $$ P (Y > 2 \lambda) = P (2^Y > 4^{\lambda}) \leq 4^{- \lambda} E (2^Y) = 4^{- \lambda} \phi_Y (2) = (4 / e)^{- \lambda}. $$
 This completes the proof of the lemma.
\end{proof} \\ \\
 Now that we control the number of entry points/infection paths, the next step is to also control the length of the infection paths individually.
 We focus on the case~$\phi_1 = \gamma = 0$ and~$\lambda_1 < \lambda_c$ to prove Theorem~\ref{th:phismall}.b, but our argument also applies to the context of Theorem~\ref{th:phismall}.c by exchanging the roles of the~1s and the~2s and taking~$\gamma = \infty$.
 For each entry point~$(x, t)$, we let~$\xi^{(x, t)}$ be the contact process with parameter~$\lambda_1$ starting with a single~1 at site~$x$ at time~$t$ constructed using the same~1-arrows and death marks as~$\xi$ but modified so that births outside~$\B_{m, n}$ are suppressed so the process only depends on the graphical representation inside~$\B_{m, n}$.
\begin{lemma}
\label{lem:decay}
 Let~$\lambda_1 < \lambda_c$.
 Then, there exists~$\delta > 0$ such that
\begin{equation}
\label{eq:decay}
  P (\xi^{(x, t)} \cap \A_{m, n} \neq \varnothing) \leq e^{- \delta L} \quad \hbox{for all} \quad L \ \hbox{large}.
\end{equation}
\end{lemma}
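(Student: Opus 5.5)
Since $\xi^{(x,t)}$ is built only from the 1-arrows and death marks inside $\B_{m,n}$ and suppresses births outside the block, it is dominated by the unrestricted contact process $\zeta^{(x,t)}$ with parameter $\lambda_1$ started from the single infected site $x$ at time $t$, built from the same graphical representation. So I will bound $P(\zeta^{(x,t)} \cap \A_{m,n} \neq \varnothing)$ and use the two classical facts about the subcritical contact process.

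\begin{itemize}
\item \textbf{Exponential decay of survival.} By Bezuidenhout and Grimmett~\cite{bezuidenhout_grimmett_1990}, together with the standard consequence that the critical value for finite expected cluster size equals~$\lambda_c$, there is~$\alpha>0$ with $P(\zeta^{x}_s \neq \varnothing) \leq e^{-\alpha s}$ for all $s\geq 0$ when $\lambda_1<\lambda_c$.
\item \textbf{Linear-speed spatial spread.} The set of sites ever infected by time~$s$ is contained in the set reached by a branching random walk with birth rate~$2d\lambda_1$. Hence there are constants~$C,\beta$ such that, for $r \geq C s$,
$$ P\big(\zeta^{x} \hbox{ reaches distance } r \hbox{ by time } s\big) \leq e^{-\beta r}. $$
\end{itemize}

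Next I split according to the position of the entry point relative to the core $\A_{m,n} = [(2m-1)L,(2m+1)L]^d \times \{1,\ldots,N\} \times [(n+3)L,(n+4)L]$.

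\begin{itemize}
\item \textbf{Bottom entry points.} These have $t=(n+2)L$. To touch $\A_{m,n}$, the process $\zeta^{(x,t)}$ must still be alive at time $(n+3)L$, which is at least $L$ units of time later. The probability is at most $e^{-\alpha L}$.
\item \textbf{Lateral entry points.} These satisfy $\norm{x-y}_\infty\geq L$ for every $y$ in the spatial projection of $\A_{m,n}$, and $t\leq (n+4)L$. Fix a small constant $\eta>0$, say $\eta$ with $C\eta<1/2$. Reaching $\A_{m,n}$ requires one of two things.
 \begin{itemize}
 \item The process survives for time $\eta L$, which has probability at most $e^{-\alpha\eta L}$.
 \item It dies before time $t+\eta L$ but still travels distance at least $L/d$ (in $\ell^1$-type distance) within time $\eta L$. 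Since $L/d \geq C\eta L$ once $\eta$ is small, this has probability at most $e^{-\beta L/d}$ by the spread bound.
 \end{itemize}
\end{itemize}

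Taking $\delta$ slightly less than $\min(\alpha\eta,\beta/d)$ gives~\eqref{eq:decay} for all $L$ large.

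The same argument applies in the context of Theorem~\ref{th:phismall}.c with $\lambda_2<\lambda_c$.

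The main obstacle is the first bullet, exponential decay in time for the subcritical process. It is a deep result (sharpness of the phase transition), which I will quote rather than reprove. I would cite Bezuidenhout and Grimmett together with Aizenman–Barsky/Menshikov-type arguments, as in standard references such as~\cite{durrett_1995}. One also has to check that, in the regime $\phi_1=\gamma=0$ after the sweeps $G^1_{m,n},G^2_{m,n}$, the infection paths from each entry point are genuinely dominated by a single-type $\cp(\lambda_1)$. This holds because internal infections are absent, so each cluster carries at most one~1 and only level-1 arrows matter.
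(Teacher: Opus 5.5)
Your proof is correct. It differs from the paper's mainly in where the spatial control comes from.

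\textbf{The paper's route.} The paper quotes from \cite{bezuidenhout_grimmett_1991} both exponential decay in time and exponential decay in space uniformly over all times, $P(\eta \not\subset [-r,r]^d \times \R_+) \leq e^{-\alpha r}$. It then concludes in one line: every entry point is at space-time distance at least $L$ from $\A_{m,n}$, so hitting $\A_{m,n}$ forces $\eta \not\subset [-L,L]^d \times [0,L]$. This has probability at most $e^{-\alpha L}+e^{-\beta L}$, with no distinction between bottom and lateral entry points.

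\textbf{Your route.} You take only the temporal decay as a black box and build the spatial estimate yourself. A lateral entry point must either survive a window of length $\eta L$ or cover a distance of order $L$ within that window. The second case is handled by the elementary path-counting/branching-random-walk bound, e.g.\ $(2de\lambda_1 s/r)^r$ for reaching $\ell^1$-distance $r$ by time $s$.

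\textbf{Comparison.} Your argument is more self-contained. It makes explicit that spatial decay follows from temporal decay plus finite speed of propagation. The price is a case split and a worse constant, roughly $\delta \approx \min(\alpha\eta,\beta/d)$ instead of $(\alpha\wedge\beta)/2$. The deep input, sharpness of the phase transition for the contact process, is the same in both.

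\textbf{Minor points.}
\begin{itemize}
\item The exponential decay result is in \cite{bezuidenhout_grimmett_1991}, not \cite{bezuidenhout_grimmett_1990}; the latter shows that the critical process dies out.
\item Lateral entry points are at $\ell^\infty$-distance at least $L$ from the spatial projection of $\A_{m,n}$. Hence the $\ell^1$-distance to be travelled is at least $L$, not merely $L/d$. With $L$, your choice $C\eta<1/2$ suffices. With $L/d$ you would need $C\eta\le 1/d$, which $C\eta<1/2$ does not give when $d\ge 3$.
\item Your closing check that the infection is dominated by a single-type $\cp(\lambda_1)$ is not needed for this lemma, since $\xi^{(x,t)}$ is a contact process by definition. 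That check belongs to the domination step~\eqref{eq:domination} in the proof of Theorem~\ref{th:phismall}.b.
\end{itemize}
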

\begin{proof}
 Letting~$\eta$ be the contact process with parameter~$\lambda_1 < \lambda_c$ starting with a single~1 at site~0 at time~0, it follows from the exponential decay~\cite{bezuidenhout_grimmett_1991} in space and time of the subcritical contact process that there are constants~$\alpha, \beta > 0$ such that
 $$ P (\eta \not \subset [-r, r]^d \times \R_+) \leq e^{- \alpha r} \quad \hbox{and} \quad P (\eta \not \subset \Z^d \times [0, s]) \leq e^{- \beta s} \quad \hbox{for all} \quad r, s > 0. $$
 Using also that~$\xi^{(x, t)}$ is dominated stochastically by~$(x, t) + \eta$, and that the entry point~$(x, t)$ is at space-time distance at least~$L$ from~$\A_{m, n}$, we deduce that
 $$ \begin{array}{rcl}
      P (\xi^{(x, t)} \cap \A_{m, n} \neq \varnothing) & \n \leq \n &
      P (\eta \not \subset [-L, L]^d \times [0, L]) \vspace*{4pt} \\ & \n \leq \n &
      P (\eta \not \subset [-L, L]^d \times \R_+) + P (\eta \not \subset \Z^d \times [0, L]) \vspace*{4pt} \\ & \n \leq \n &
      e^{- \alpha L} + e^{- \beta L} \leq e^{- \delta L}, \end{array} $$
 for~$\delta = (\alpha \wedge \beta) / 2 > 0$ and all~$L$ sufficiently large.
\end{proof} \\ \\
 We can now deduce that our third good event,
 $$ G_{m, n}^3 = \{\xi^{(x, t)} \cap \A_{m, n} = \varnothing \ \hbox{for every entry point} \ (x, t) \}, $$
 occurs with probability close to one for~$L$ large.
\begin{lemma}
\label{lem:healthy}
 For all~$\ep > 0$,~$P (G_{m, n}^3) \geq 1 - \ep / 5$ for all~$L$ large.
\end{lemma}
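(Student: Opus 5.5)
The plan is to combine Lemma~\ref{lem:lateral}, which controls the number of entry points, with Lemma~\ref{lem:decay}, which controls each infection path individually. The two are joined by a conditioning argument and a union bound. Exponential decay in $L$ from Lemma~\ref{lem:decay} will beat the polynomial-in-$L$ number of entry points.

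\textbf{Step 1: split on the number of lateral entry points.} Let $Y$ be the number of lateral entry points and set $\lambda = 2LN((4L+1)^d - (4L-1)^d)(\lambda_1 + \lambda_2)$, as in Lemma~\ref{lem:lateral}. Then
 $$ P ((G_{m, n}^3)^c) \leq P (Y > 2 \lambda) + P \big((G_{m, n}^3)^c \cap \{Y \leq 2 \lambda\} \big). $$
 \begin{itemize}
 \item If $\lambda_1 + \lambda_2 > 0$, then $\lambda \geq c L^d \to \infty$, so the first term is at most $(4/e)^{-\lambda} \leq \ep/10$ for $L$ large.
 \item If $\lambda_1 + \lambda_2 = 0$, there are no lateral entry points and the first term vanishes.
 \end{itemize}

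\textbf{Step 2: union bound over entry points.} On $\{Y \leq 2\lambda\}$, the total number of entry points is at most
 $$ (4L+1)^d + 2\lambda \leq C L^{d+1}, $$
 combining the bottom boundary count in~\eqref{eq:bottom} with the lateral count. The constant $C$ depends on $N$, $d$, $\lambda_1$, $\lambda_2$. By Lemma~\ref{lem:decay}, each entry point has $P(\xi^{(x,t)} \cap \A_{m,n} \neq \varnothing) \leq e^{-\delta L}$. Hence the second term is at most
 $$ C L^{d+1} e^{-\delta L} \leq \ep/10 \quad \hbox{for $L$ large}. $$

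\textbf{Step 3 (main obstacle): justify the union bound over random entry points.} The entry points are random: they are the heads of arrows coming from outside $\B_{m,n}$. I would handle this by conditioning on the Poisson processes of arrows crossing the lateral boundary. The processes $\xi^{(x,t)}$ use only the 1-arrows and death marks inside $\B_{m,n}$, which are independent of the boundary-crossing arrows. So conditionally on the entry points, each $\xi^{(x,t)}$ is a contact process started from a single infected site at $(x,t)$, and the bound~\eqref{eq:decay} holds uniformly in $(x,t)$.

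This requires every entry point to lie at space-time distance at least $L$ from $\A_{m,n}$, which I would check for both kinds of entry point.
 \begin{itemize}
 \item Bottom entry points sit at time $(n+2)L$, while $\A_{m,n}$ starts at time $(n+3)L$.
 \item Lateral entry points lie at spatial distance at least $L$ from $\A_{m,n}$.
 \end{itemize}
 Since $\xi^{(x,t)}$ is dominated by $(x,t) + \eta$, the estimate of Lemma~\ref{lem:decay} applies to every entry point. Summing the conditional bound and taking expectations gives the estimate of Step~2. Adding Steps 1 and 2 yields $P((G_{m,n}^3)^c) \leq \ep/5$ for all $L$ large.
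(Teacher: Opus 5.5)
Your proof is correct and matches the paper's argument. You split on $\{Y > 2\lambda\}$ using Lemma~\ref{lem:lateral}, then apply Lemma~\ref{lem:decay} with a union bound over the at most $(4L+1)^d + 2\lambda$ entry points, so that $e^{-\delta L}$ beats the polynomial count. Beyond this, your Step~3 spells out the independence between the boundary-crossing arrows and the graphical representation inside $\B_{m,n}$, which the paper uses implicitly when bounding $P((G_{m,n}^3)^c \,|\, Y \leq 2\lambda)$, and you also handle the degenerate case $\lambda_1 + \lambda_2 = 0$, where the paper's bound $(4/e)^{-\lambda}$ does not tend to zero.
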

\begin{proof}
 Conditioning on whether the number~$Y$ of entry points on the lateral boundary is larger or smaller than~$2 \lambda$, and using~\eqref{eq:bottom}--\eqref{eq:decay}, we get
 $$ \begin{array}{rcl}
      P ((G_{m, n}^3)^c) & \n = \n &
      P ((G_{m, n}^3)^c \,| \,Y > 2 \lambda) P (Y > 2 \lambda) + P ((G_{m, n}^3)^c \,| \,Y \leq 2 \lambda) P (Y \leq 2 \lambda) \vspace*{4pt} \\ & \n \leq \n &
      P (Y > 2 \lambda) + P ((G_{m, n}^3)^c \,| \,Y \leq 2 \lambda) \vspace*{4pt} \\ & \n = \n &
      P (Y > 2 \lambda) + P (\xi^{(x, t)} \cap \A_{m, n} \neq \varnothing \ \hbox{for some} \ (x, t) \,| \,Y \leq 2 \lambda) \vspace*{4pt} \\ & \n \leq \n &
        (4/e)^{- \lambda} + ((4L + 1)^d + 2 \lambda) e^{- \delta L}. \end{array} $$
 Because this goes to zero as~$L$ goes to infinity, the lemma follows.
\end{proof} \\ \\
 Using Lemmas~\ref{lem:phismall-clean} and~\ref{lem:healthy}, we are now ready to prove Theorem~\ref{th:phismall}.b.
 For a visualization of the different steps of the proof and key results collected so far, we refer the reader to Figure~\ref{fig:decay1}. \\ \\
\begin{proofof}{Theorem~\ref{th:phismall}.b}
 We again say that~$(m, n) \in \lat_-$ is a good site if
 $$ E_{m, n} = \{\A_{m, n} \ \hbox{is healthy} \} = \{\xi_t (x, i) = 0 \ \hbox{for all} \ (x, i, t) \in \A_{m, n} \} $$
 occurs and take~$\ep < (2d + 1)^{- (2M + 1)^{d + 1}}$ with~$M = 4$ since
 $$ \C_{m, n} \cap \C_{m', n'} = \varnothing \quad \hbox{whenever} \quad |m - m'| > 2 \ \ \hbox{or} \ \ |n - n'| > 4. $$
 Assume first that~$\phi_1 = \gamma = 0$.
 Then, on the event~$G_{m, n}^1 \cap G_{m, n}^2$, all the social clusters in the space-time block~$\B_{m, n}$ have at most one~1 and no~2s so the process~$\xi_t$ reduces to the basic contact process with parameter~$\lambda_1$ in this block.
 In particular, the process is dominated by the superposition of all the non-interacting contact processes~$\xi^{(x, t)}$ that allows multiple infections per site:
\begin{equation}
\label{eq:domination}
\textstyle \sum_i \,\xi_s (y, i) = \xi_s (y, 1) \leq \sum_{(x, t)} \,\xi_s^{(x, t)} (y) \quad \hbox{for all} \quad (y, s) \in \pi_{13} (\B_{m, n}),
\end{equation}
 where the second sum is over all the entry points~$(x, t)$ on the boundary of~$\B_{m, n}$.
 In particular, if the event~$G_{m, n}^3$ also occurs, then the block~$\A_{m, n}$ is fully healthy.
 To extend this result to~$\phi_1, \gamma > 0$ small, we first fix~$L$ large so that Lemmas~\ref{lem:phismall-clean} and~\ref{lem:healthy} hold, and define
 $$ G_{m, n}^4 = \{\hbox{there are no~\bul{1} in~$\C_{m, n}$} \} \quad \hbox{and} \quad G_{m, n}^5 = \{\hbox{there are no~$\circ$ in~$\C_{m, n}$} \}. $$
 Then, the proof of Lemma~\ref{lem:perturb-1}~(replacing the rate~$\gamma$ with the infection rate~$\phi_1$) and Lemma~\ref{lem:perturb-1} itself with~$M = 4$ and~$T = L$ imply that there exist~$\phi_-, \gamma_- > 0$ such that
 $$ \phi_1 < \phi_- \ \hbox{and} \ \gamma < \gamma_- \quad \Longrightarrow \quad P (G_{m, n}^4) \geq 1 - \ep / 5 \ \hbox{and} \ P (G_{m, n}^5) \geq 1 - \ep / 5. $$
 In addition, on these events, the process evolves as if~$\phi_1 = \gamma = 0$ in the block~$\C_{m ,n}$, so the block~$\A_{m ,n}$ is again fully healthy.
 In summary, calling~$G_{m, n}$ the intersection of the five events~$G_{m, n}^i$,
 $$ \textstyle P (G_{m, n}) \geq 1 - \sum_{i = 1, 2, 3, 4, 5} P ((G_{m, n}^i)^c) \geq 1 - \ep \quad \hbox{and} \quad G_{m, n} \subset E_{m, n}, $$
 where the event~$G_{m ,n}$ only depends on the graphical representation in the block~$\C_{m ,n}$.
 This shows that the set of good sites dominates the set of open sites in a~4-dependent oriented site percolation process on the directed graph~$\vec{\lat}_-$ with parameter~$1 - \ep$.
 Lemma~\ref{lem:perco-2} and our choice of~$\ep$ imply that the closed sites do not percolate, so the sites that are not good, corresponding to~$\A$-blocks that might be infected, do not percolate either.
 This shows extinction.
\end{proofof} \\ \\
\begin{proofof}{Theorem~\ref{th:phismall}.c}
 This follows the proof of Theorem~\ref{th:phismall}.b with a few exceptions.
\begin{enumerate}
\item
 Lemma~\ref{lem:phismall-clean} still holds under the assumptions of the theorem but, as previously explained, the second good event~$G_{m, n}^2$ is no longer needed. \vspace*{4pt}
\item
 The domination~\eqref{eq:domination} still holds when~$\phi_2 = 0$ and~$\gamma = \infty$, but the process~$\xi^{(x, t)}$ is now defined as the contact process with parameter~$\lambda_2$, constructed using the same~2-arrows and death marks as~$\xi$, that keeps track of the~2s rather than the~1s. \vspace*{4pt}
\item
 The event~$G_{m, n}^4$ is defined similarly but looks at the~\bul{2} instead of the~\bul{1}. \vspace*{4pt}
\item
 The event~$G_{m, n}^5$ should now be defined as the event~$G_{m, n}^4$ in the proof of Theorem~\ref{th:phismall}.c, and, as explained earlier, because the perturbation is now performed at~$\gamma = \infty$ instead of~$\gamma = 0$, the probability of this event is now controlled by Lemma~\ref{lem:perturb-2} instead of Lemma~\ref{lem:perturb-1}.
\end{enumerate}
 This completes the proof of the theorem.
\end{proofof}


\end{document}